\documentclass[14t,a4paper]{amsart}
\usepackage{amsmath,amsthm,amssymb,stmaryrd,latexsym,a4wide,epic,eepic,bbm,mathrsfs,amsfonts,cases,yfonts,indentfirst,bbold}
\usepackage{dsfont,color,mathdots}

\usepackage{amsmath,amssymb}

\usepackage{amsfonts}
\usepackage{tipa}

\usepackage{CJK, CJKnumb}
\usepackage{color}      
\usepackage{indentfirst}        
\usepackage{latexsym, bm}        

\usepackage{graphicx}
\usepackage{cases}
\usepackage{fancyhdr}
\usepackage{pifont}
\newtheorem{theorem}{Theorem}
\newtheorem{defi}[theorem]{Definition}
\newtheorem{lemma}[theorem]{Lemma}
\newtheorem{coro}[theorem]{Corollary}
\newtheorem{proposition}[theorem]{Proposition}
\newtheorem{example}[theorem]{Example}

\newtheorem{remark}[theorem]{Remark}
\usepackage[all]{xy}
\usepackage[active]{srcltx}
\usepackage{enumerate}
\numberwithin{equation}{section}

\usepackage{tikz-cd}

\begin{document}

\title[Quantum Supersymmetries (II):  Loewy Filtrations and Quantum de Rham Cohomology]{Quantum Supersymmetries (II): Loewy Filtrations and\\ Quantum de Rham Cohomology over\\ Quantum Grassmann Superalgebra}

\author[G. Feng]{Ge Feng}
\address{College of Science, University of Shanghai for Science and Technology, Shanghai 200093, China}
\email{fengge@usst.edu.cn}

\author[N.H. Hu]{Naihong Hu}
\address{School of Mathematical Sciences, MOE Key Laboratory of Mathematics and Engineering Applications \& Shanghai Key Laboratory of PMMP, East China Normal University, Shanghai 200241, China}
\email{nhhu@math.ecnu.edu.cn}

\author[M. Rosso]{Marc Rosso}
\address{UFR Mathematics de Universit\'e Paris Cit\'e, CNRS, IMJ-PRG, B\^atiment Sophie Germain – 8 place Aur\'elie Nemours, 75013 Paris}
\email{marc.rosso@imj-prg.fr}

\thanks{
The paper is supported by the NNSFC (Grant No. 12171155) and in part by Science and Technology Commission of Shanghai Municipality (No. 22DZ2229014).}

\begin{abstract}
We explore the indecomposable submodule structure of quantum Grassmann super-algebra $\Omega_q(m|n)$ and its truncated objects $\Omega_q(m|n,\textbf{r})$ in the case when $q=\varepsilon$ is an $\ell$-th root of unity. A net-like weave-lifting method is developed to show the indecomposability of all the homogeneous super subspaces $\Omega_q^{(s)}(m|n,\textbf{r})$ and $\Omega_q^{(s)}(m|n)$ as $\mathcal U_q(\mathfrak{gl}(m|n))$-modules by defining ``energy grade" to depict their ``$\ell$-adic" phenomenon. Their Loewy filtrations are described, the Loewy layers and dimensions are determined by combinatorial identities. The quantum super de Rham cochain short complex $(\mathcal D_q(m|n)^{(\bullet)},d^\bullet)$ is constructed and proved to be acyclic (Poincar\'e Lemma), where $\mathcal D_q(m|n)=\Omega_q(m|n)\otimes \sqcap_q(m|n)$ and  $\sqcap_q(m|n)$ is the quantum exterior super-algebra, over which we define the $q$-differentials. 
However, the truncated quantum de Rham cochain subcomplexes $(\mathcal D_q(m|n,\textbf{r})^{(\bullet)},d^\bullet)$
we mainly consider are no longer acyclic and 
the resulting quantum super de Rham cohomologies $H^s_{DR}(\mathcal D_q(m|n, \mathbf r)^{(\bullet)})$ are highly nontrivial.
\end{abstract}
\keywords{Quantum Grassmann superalgebra, Energy grade, Indecomposability, Loewy filtration, Rigidity, Quantum super de Rham cochain complex, Quantum de Rham cohomology}

\subjclass{Primary 17A70, 17B10, 17B37; Secondary 81R60, 81T70, 81T75}
\maketitle

\tableofcontents

\section{Introduction}

\noindent{\bf 1.1.}
This paper is a continuation of \cite{FHRZ}, where the authors introduced the notion of quantum (resp., dual) Grassmann superalgebra $\Omega_q(m|n)$ (resp., $\Lambda_q(m|n)$) by defining the quantum $(m|n)$-superspace $A_q^{m|n}$ and its Manin dual superspace $(A_q^{m|n})^!$. Furthermore, the quantum general linear superalgebra $U_q(\mathfrak{gl}(m|n))$ is realized as a certain quantum differential operators superalgebra defined over  $A_q^{m|n}$ and  $(A_q^{m|n})^!$, so the quantum (resp., restricted) Grassmann superalgebra $\Omega_q(m|n)$ (resp., $\Omega_q(m|n; \mathbf 1)$) and dual quantum (resp., restricted) Grassmann superalgebra $\Lambda_q(m|n)$ (resp., $\Lambda_q(m|n; \mathbf 1)$) are made into $U_q(\mathfrak{gl}(m|n))$-module (also $u_q(\mathfrak{gl}(n))$-module) superalgebras (resp. in the root of unity cases), respectively, which provides a construction of a quantized version of the super de Rham complex of infinite length due to Manin (\cite{M2}) and Deligne-Morgan (\cite{DM}).

\smallskip
\noindent{\bf 1.2} \ 
The purpose of this paper is two-fold: One is to adopt a socle filtration analysis to study the ``modular" representation theory of $U_q(\mathfrak{gl}(m|n))$ or $u_q(\mathfrak{gl}(m|n))$ in the root of unity case. Another is to construct the quantum super de Rham short cochain complex $\mathcal D_q(m|n)^{(\bullet)}$  of length $m{+}n$ and its subcomplex $\mathcal D_q(m|n,\mathbf r)^{(\bullet)}$,  the truncated quantum super de Rham short subcomplex, and develop an inherent technique adapted to the quantum super case to calculate their cohomologies (as such calculations in our supercase are highly nontrivial and certainly interesting, and require to find new insights for dealing with the supercase). 

\smallskip
\noindent{\bf 1.3} \ For the first goal, we will explore the submodules constituents of homogenous
$\mathcal U_q$-modules $\Omega^{(s)}_q(m|n)$ and $\Omega_q^{(s)}(m|n;\mathbf r)$. To do that, first of all, 
we introduce the concept of ``{\it energy grade}"--- a grade modulo $\ell$ (here $\ell=\textbf{char}(q)$) over the quantum Grassman super-algebra $\Omega_q(m|n)$ which depicts the ``$\ell$-adic" phenomenon (see Lemmas 9, 10, 11 \& 12, together with their proofs)  shaping such a stratification of submodules (see Theorems 14, 16 \& 17). This only appears in the root of unity cases.
We then describe the Loewy filtrations (Theorem 23) for all indecomposable submodules $\Omega_q^{(s)}(m|n)$ and $\Omega_q^{(s)}(m|n; \mathbf r)$, as well as the Loewy layers and their dimensions determined by combinatorial identities (Corollary 24), 
and prove the rigidity of these modules (Theorem 26). Particularly, a net-like weave-lifting method  is developed to show the indecomposability of all the homogeneous super subspaces $\Omega_q^{(s)}(m|n,\textbf{r})$ and $\Omega_q^{(s)}(m|n)$ as $\mathcal U_q$-modules, where $\mathcal U_q=U_q(\mathfrak{gl}(m|n))$, or $u_q(\mathfrak{gl}(m|n))$ when $\text{char}\,(q)=\ell\ge 3$.  (See Corollary 22, and some net-like inclusion relationships for submodules exhibited in Figure 1 of Example 4.6)

\smallskip
\noindent{\bf 1.4} \ 
For the second goal, a challenging task is to find out the way to define/construct accurately the quantum differentials (Definition 30) that lead to the definition of the quantum super de Rham cochain short complex (Theorem 31) as expected in the quantum super-case. To our knowledge, we haven't seen any classical analogue in the cohomology theory of Lie superalgebras (see \cite{N, N1, SZ, Zh}, etc.). Both the quantum Grassmann superalgebra $\Omega_q(m|n)$ and the quantum exterior superalgebra $\sqcap_q(m|n)$ being $\mathcal U_q$-module super-algebras, taking their tensor product, one gets a new $\mathcal U_q$-module $\mathcal D_q(m|n):=\Omega_q(m|n)\otimes \sqcap_q(m|n)$. We thus get the expected quantum super de Rham cochain short complex $(\mathcal D_q(m|n)^{(\bullet)}, d^\bullet)$, which turns out to be acyclic (Actually, this is Poincar\'e Lemma of quantum super de Rham complex). However, remarkably, the quantum truncated super de Rham cohomologies $H^s_{DR}(\mathcal D_q(m|n, \mathbf r)^{(\bullet)})$ are highly nontrivial (see Theorem 36). The argumentation of the non-truncated case depends technically on using a ``modular'' trick (see the proof of Theorem 37) after studying the truncated case first. As for the proof of Main Theorem 36, we need to introduce the concept of ``super-weights" in Definition 33, and have to distinguish the ``critical super-weights" and ``non-critical super-weights" (see Definition 35). The complex restricted to those non-critical super-weights is always exact, meanwhile only those with critical super-weights have nonzero contributions to each $s$-th cohomology group, for $0\le s\le m+n$. Besides, we give in Lemma 34 a crucial criterion to distinguish the specific constituents of quantum differential forms of degree $s$ in $\mathcal D_\lambda^{(s)}$, for each super-weight $\lambda$, when considering each component of quantum de Rham cohomologies in the supercase. A remarkable point about the discovery of two kinds of quantum super de Rham long/short complexes reveals a
definite fact that the quantization deformation of the classical differential partial operators is of chirality, i.e., we have two different definitions for quantum differential operators (see
those defined in the proof in Theorem 30 \& Theorem 20 \cite{FHRZ}).
 
\section{Preliminaries}
\subsection{Notations}
Throughout the paper, we work on an algebraically closed field $\mathbb{K}$ of characteristic $0$. Denote by $\mathbb{Z}_+$, $\mathbb{N}$ the set of nonnegative integers, positive integers, respectively.

For any $m,n\in \mathbb{Z}_+$, we consider the set $I=\{1,2,\cdots,m+n\}$ with convention that $I=\emptyset$ if $m=n=0$. If we set $I_0:=\{1,2,\cdots,m\}$, $I_1:=\{m+1,m+2,\cdots,m+n\}$, then $I=I_0\cup I_1$. Denote by $J$ the set $\{1,2,\cdots,m+n-1\}$ with convention that $J=\emptyset$ if $m+n< 2$.

For any $i,j\in I$, let $E_{ij}$ be the matrix unit of size $(m+n)\times (m+n)$ with $1$ in the $(i,j)$ position, and $0$ in others. For a $\mathbb{Z}_2$-graded vector space $V:=V_{\bar{0}}\oplus V_{\bar{1}}$, denoted by $\bar{v}$ the parity of the homogeneous element $v\in V$.

\subsection{General linear Lie superalgebras}
The general linear Lie superalgebra $\mathfrak{gl}(m|n)$ has a standard basis $E_{ij}$, $ i,j\in I$. Under this basis, we get the graded structure  
$$
\mathfrak{gl}(m|n)=\mathfrak{gl}(m|n)_0\bigoplus \mathfrak{gl}(m|n)_1,
$$
where
$\mathfrak{gl}(m|n)_{\bar{0}}:=\text{Span}\,\{E_{ij}\mid i,j\in I_0\ \textit{or}\ i,j\in I_1\}$, $\mathfrak{gl}(m|n)_{\bar{1}}:=\text{Span}\,\{E_{ij}\mid i\in I_0,j\in I_1\ \textit{or}\ i\in I_1,j\in I_0\}$.

If $m=0$ $(\text{resp.}, \,n=0)$, then $\mathfrak{gl}(m|n)$ is exactly the general linear Lie algebra $\mathfrak{gl}(n)$ $(\text{resp.},\, \mathfrak{gl}(m))$.

The standard Cartan subalgebra $\mathfrak{h}$ of $\mathfrak{g}$ consists of all diagonal matrices in $\mathfrak{g}$ (where $\mathfrak g=\mathfrak{gl}(m|n)$), that is, the $\Bbbk$-span of $E_{ii},i\in I$. For each $i\in I$, we
denote by $\epsilon_i$ the dual of $E_{ii}$, which forms a basis of $\mathfrak{h}^\ast$.
The root system of $\mathfrak{g}$ with respect to $\mathfrak{h}$ is $\Delta:=\Delta_{\overline{0}}\cup\Delta_{\overline{1}}$, where
\begin{displaymath}
\Delta_{\overline{0}}:= \{\epsilon_i-\epsilon_j\mid\ i,j\in I_{\overline{0}} \ \textrm{or} \ i,j\in I_{\overline{1}}\}\quad\text{and}\quad
\Delta_{\overline{1}}:= \{\pm(\epsilon_i-\epsilon_j)\mid\  i\in I_{\overline{0}}, j\in I_{\overline{1}}\},
\end{displaymath}
 and its standard fundamental (simple root) system is given by the set
 \begin{displaymath}
 \Pi:=\{\alpha_i:=\epsilon_i-\epsilon_{i+1}\mid 1\leq i< m{+}n\}.
 \end{displaymath}
 Let $\Lambda:=\mathbb{Z}\epsilon_1\bigoplus\mathbb{Z}\epsilon_2\bigoplus\cdots\bigoplus\mathbb{Z}\epsilon_{m+n}$. For each $i\in I$
 denote by $\omega_i$ the fundamental weight $\epsilon_1+\epsilon_2+\cdots+\epsilon_{i}$.
From \cite{Mu}, we see that there is a symmetric bilinear form on $\Lambda$:
\begin{displaymath}
(\epsilon_i,\epsilon_j)=\left\{\begin{array}{ll} \delta_{ij} \quad & \mathrm{if} \ 1\leq i\leq m,\\-\delta_{ij} \quad & \mathrm{if} \ m+1\leq i \leq m{+}n,
\end{array}\right.
\end{displaymath}
which is induced from the supertrace $\mathfrak{str}$ on $\mathfrak{g}$ defined as:
$\mathfrak{str}(g):=\mathrm{tr}(a)-\mathrm{tr}(d)$ if
\begin{displaymath}
g=\left(\begin{array}{cc}
a & b \\
c & d
\end{array}\right)
\end{displaymath}
with $a, b, c$ and $d$ being $m\times m,m\times n, n\times m$ and $n\times n$ matrices, respectively.
\subsection{Quantum general linear superalgebras $U_q(\mathfrak{gl}(m|n))$}
Let $\mathbb{K}^*:=\mathbb{K}\setminus \{0\}$, $1\neq q\in \mathbb{K}^*$. The quantum general linear superalgebra $U_q(\mathfrak{gl}(m|n))$ is defined as the $\mathbb{K}$-superalgebra with generators $e_j, f_j\ (j\in J), K_i,K_i^{-1}\ (i\in I)$ and relations:
\begin{equation*}
\begin{split}
\text{\rm (R1)} &\qquad K_iK_j=K_jK_i,\qquad K_iK_i^{-1}=1=K_i^{-1}K_i;\\
\text{\rm (R2)} &\qquad K_ie_j=q_i^{\delta_{ij}-\delta_{i,j+1}}e_jK_i,\qquad K_if_j=q_i^{\delta_{i,j+1}-\delta_{ij}}f_jK_i;\\
\text{\rm (R3)} &\qquad e_if_j-(-1)^{p(e_i)p(f_j)}f_je_i=\delta_{ij}\frac{\mathcal{K}_i-\mathcal{K}_i^{-1}}{q_i-q_i^{-1}};\\
\text{\rm (R4)} &\qquad e_ie_j=e_je_i,\qquad f_if_j=f_jf_i,\qquad |i-j|>1;\\
\text{\rm (R5)} &\qquad e_i^2e_j-(q+q^{-1})e_ie_je_i+e_je_i^2=0,\qquad |i-j|=1, \ i\neq m;\\
                &\qquad f_i^2f_j-(q+q^{-1})f_if_jf_i+f_jf_i^2=0,\qquad |i-j|=1,\ i\neq m;\\
\text{\rm (R6)} &\qquad e_m^2=f_m^2=0;
\\
\text{\rm (R7)} &\qquad e_{m-1}e_me_{m+1}e_m+e_me_{m-1}e_me_{m+1}+e_{m+1}e_me_{m-1}e_m\\
                &\qquad +e_me_{m+1}e_me_{m-1}-(q+q^{-1})e_me_{m-1}e_{m+1}e_m=0;\\
                &\qquad f_{m-1}f_mf_{m+1}f_m+f_mf_{m-1}f_mf_{m+1}+f_{m+1}f_mf_{m-1}f_m\\
                &\qquad +f_mf_{m+1}f_mf_{m-1}-(q+q^{-1})f_mf_{m-1}f_{m+1}f_m=0;
\end{split}
\end{equation*}
where $\mathcal{K}_i=K_iK_{i+1}^{-1}$ for $i\in J$, $p(e_i)=p(f_i)=\delta_{im}$, and
$$q_i=\begin{cases}
      q,&\qquad i\in I_0,\\
      q^{-1},&\qquad i\in I_1.
      \end{cases}$$
Moreover, the Hopf superalgebra structure $(\Delta, \epsilon, S)$ on $U_q(\mathfrak{gl}(m|n))$ is given as follows: for any $i\in I,\, j\in J$:
\begin{equation}
\begin{split}
& \Delta(e_j)=e_j\otimes \mathcal{K}_j+1\otimes e_j,\qquad \epsilon(e_j)=0,\qquad S(e_j)=-e_j\mathcal{K}_j^{-1},\\
& \Delta(f_j)=f_j\otimes 1+\mathcal{K}_j^{-1}\otimes f_j,\qquad \epsilon(f_j)=0,\qquad S(f_j)=-\mathcal{K}_jf_j,\\
& \Delta(K_i^{\pm1})=K_i^{\pm1}\otimes K_i^{\pm1},\qquad \epsilon(K_i^{\pm1})=1,\qquad S(K_i^{\pm1})=K_i^{\mp1}.
\end{split}
\end{equation}

By definition, it is obvious that $U_q(\mathfrak {gl}_m)\subset U_q(\mathfrak{gl}(m|n))$ and 
 $U_{q^{-1}}(\mathfrak {gl}_n)\subset U_q(\mathfrak{gl}(m|n))$.

\subsection{Arithmetic properties of $q$-binomials}
Let $\mathbb{Z}[v,v^{-1}]$ be the Laurant polynomial ring in variable $v$. For any $n\geq 0$, define
\begin{displaymath}
[n]_v=\frac{v^n-v^{-n}}{v-v^{-1}}, \qquad [n]_v!=[n]_v[n-1]_v\cdots[1]_v.
\end{displaymath}
Obviously, $[n]_v$, $[n]_v!\in \mathbb{Z}[v,v^{-1}]$.

In \cite{Lus}, for integers $m,\, r\geq 0$, denote by
\begin{displaymath}
{\,m\,\brack\, r\,}_v=\prod\limits_{i=1}^{r}\frac{v^{m-i+1}-v^{-m+i-1}}{v^i-v^{-i}}=\in\mathbb{Z}[v,v^{-1}].
\end{displaymath}
We have

\text{\rm (1)} For $0\leq r\leq m$, ${\,m\,\brack\, r\,}_v=\frac{[m]_v!}{([r]_v![m-r]_v!)};$

\text{\rm (2)} For $0\leq m< r$, ${\,m\,\brack\, r\,}_v=0$;

\text{\rm (3)} For $m< 0$, ${\,m\,\brack\, r\,}_v=(-1)^r{\,-m+r-1\,\brack\, r\,}_v$;

\text{\rm (4)} Set ${\,m\,\brack\, r\,}_v=0$, when $r< 0$.

For $q\in \mathbb{K}^*$, we briefly set
$$[n]:=[n]_{v=q},\qquad [n]!:=[n]_{v=q}!,\qquad {\,m\,\brack\, r\,}={\,m\,\brack\, r\,}_{v=q},$$
when $v$ is specialized to $q$, where $q$-binomials satisfy
$${\,m\,\brack\, r\,}=q^{r-m}{\,m-1\,\brack\, r-1\,}+q^r{\,m-1\,\brack\, r\,}.$$
Define the characteristic of $q$, $\textbf{char}(q):=\text{min}\{\ell\mid[\ell]=0, \ell\in\mathbb{Z}_{\geq 0}\}$. $\textbf{char}(q)=0$ if and only if $q$ is generic. If $\textbf{char}(q)=\ell>0$, and $q\neq\pm 1$, then either

\text{\rm (1)} $q$ is the $2\ell$-th primitive root of unity; or

\text{\rm (2)} $\ell$ is odd and $q$ is the $\ell$-th primitive root of unity.

\begin{lemma} $($\cite{Lus, GH}$)$
When $q\in \mathbb{K}^\ast$, and $\textbf{char}(q)=\ell\geq 3$, then

\text{\rm (1)} If $s=s_0+s_1\ell$, $r=r_0+r_1\ell$ with $0\leq s_0,r_0< \ell$, $s_1,r_1\in \mathbb{Z}_+$, and $s\geq r$, then  ${\,s\,\brack\, r\,}={\,s_0\,\brack\, r_0\,}\binom{s_1}{r_1}$ when $q$ is an $\ell$-$th$ primitive root of unity with $\ell$ odd and ${\,s\,\brack\, r\,}=(-1)^{(s_1+1)r_1\ell+s_0r_1-r_0s_1}{\,s_0\,\brack\, r_0\,}\binom{s_1}{r_1}$ when $q$ is a $2\ell$-$th$ primitive root of unity, where $\binom{s_1}{r_1}$ is an ordinary binomial coefficient.

\text{\rm (2)} If $s=s_0+s_1\ell$, with $0\leq s_0< \ell$, $s_1\in \mathbb{Z}$, then ${\,s\,\brack\, \ell\,}=s_1$, when $q$ is an $\ell$-$th$ primitive root of unity with $\ell$ odd and ${\,s\,\brack\, \ell\,}=(-1)^{(s_1+1)\ell+s_0}s_1$, when $q$ is a $2\ell$-th primitive root of unity.

\text{\rm (3)} If $s=s_0+s_1\ell$, $s^{\prime}=s_0^{\prime}+s_1^{\prime}\ell\in \mathbb{Z}$ with $0\leq s_0,s_0^{\prime}< \ell$ satisfy $q^s=q^{s^{\prime}}$, ${\,s\,\brack\, \ell\,}={\,s^{\prime}\,\brack\, \ell\,}$, then $s=s^{\prime}$ if $\ell$ is odd or $\ell$ is even but $s_1s_1^{\prime}\geq 0$, and $s^{\prime}=\bar{s}=s_0-s_1\ell$ if $\ell$ is even but $s_1s_1^{\prime}< 0$.
\end{lemma}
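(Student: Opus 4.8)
The plan is to derive all three statements from a single Lucas-type factorization of the Gaussian binomials at a root of unity, obtained through Gauss's $q$-binomial theorem, and then to specialize. First recall that in $\mathbb{K}[x]$ the identity $\prod_{k=0}^{N-1}(1+q^{2k}x)=\sum_{r\ge0}q^{r(N-1)}{\,N\,\brack\, r\,}x^{r}$ holds; it follows at once by induction on $N$ from the $q$-Pascal recursion ${\,m\,\brack\, r\,}=q^{r-m}{\,m-1\,\brack\, r-1\,}+q^{r}{\,m-1\,\brack\, r\,}$ recalled above. Since $\textbf{char}(q)=\ell\ge3$, in either admissible case ($q$ a primitive $\ell$-th root of unity with $\ell$ odd, or $q$ a primitive $2\ell$-th root of unity) the element $q^{2}$ is a primitive $\ell$-th root of unity and $q^{2\ell}=1$; hence $q^{2j}$ runs over all the $\ell$-th roots of unity as $j$ runs over $0,\dots,\ell-1$, and $\prod_{j=0}^{\ell-1}(1+q^{2j}x)=1-(-1)^{\ell}x^{\ell}$.

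For part (1) I would put $s=s_{0}+s_{1}\ell$, group the $s$ factors of $\prod_{k=0}^{s-1}(1+q^{2k}x)$ into $s_{1}$ consecutive blocks of length $\ell$ followed by the remaining $s_{0}$ factors, and use $q^{2\ell}=1$ to rewrite each full block as $1-(-1)^{\ell}x^{\ell}$ and the tail as $\sum_{r_{0}}q^{r_{0}(s_{0}-1)}{\,s_{0}\,\brack\, r_{0}\,}x^{r_{0}}$. Comparing this with the expansion of the whole product gives
\[
\sum_{r}q^{r(s-1)}{\,s\,\brack\, r\,}x^{r}=\bigl(1-(-1)^{\ell}x^{\ell}\bigr)^{s_{1}}\sum_{r_{0}}q^{r_{0}(s_{0}-1)}{\,s_{0}\,\brack\, r_{0}\,}x^{r_{0}}.
\]
Expanding the first factor on the right by the ordinary binomial theorem and reading off the coefficient of $x^{r}$ with $r=r_{0}+r_{1}\ell$, $0\le r_{0}<\ell$ (the only nonzero terms have $r_{0}\le s_{0}<\ell$, so this is the relevant decomposition), one obtains
\[
q^{r(s-1)}{\,s\,\brack\, r\,}=(-1)^{(\ell+1)r_{1}}\binom{s_{1}}{r_{1}}\,q^{r_{0}(s_{0}-1)}{\,s_{0}\,\brack\, r_{0}\,}.
\]
Since $r(s-1)-r_{0}(s_{0}-1)$ is an integer multiple of $\ell$, the factor $q^{r(s-1)-r_{0}(s_{0}-1)}$ equals a power of $q^{\ell}$; substituting $q^{\ell}=1$ in the odd case (where also $(\ell+1)r_{1}$ is even) gives ${\,s\,\brack\, r\,}={\,s_{0}\,\brack\, r_{0}\,}\binom{s_{1}}{r_{1}}$, while substituting $q^{\ell}=-1$ in the $2\ell$-th-root case and combining the resulting sign with $(-1)^{(\ell+1)r_{1}}$ yields, after a parity computation, ${\,s\,\brack\, r\,}=(-1)^{(s_{1}+1)r_{1}\ell+s_{0}r_{1}-r_{0}s_{1}}{\,s_{0}\,\brack\, r_{0}\,}\binom{s_{1}}{r_{1}}$.

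Part (2) is the case $r=\ell$ of part (1), i.e. $r_{0}=0$ and $r_{1}=1$, so that ${\,s_{0}\,\brack\, 0\,}=1$ and $\binom{s_{1}}{1}=s_{1}$; for $s_{1}<0$ one first uses the negation rule ${\,m\,\brack\, r\,}=(-1)^{r}{\,-m+r-1\,\brack\, r\,}$ recalled above to reduce to a nonnegative upper index and then propagates the sign. For part (3), the hypothesis $q^{s}=q^{s'}$ fixes $s\bmod\ell$, hence $s_{0}=s_{0}'$, when $\ell$ is odd, and fixes $s\bmod2\ell$, hence $s_{0}=s_{0}'$ together with the parity of $s_{1}-s_{1}'$, when $\ell$ is even; feeding this into the equality ${\,s\,\brack\, \ell\,}={\,s'\,\brack\, \ell\,}$ evaluated by part (2) and distinguishing whether $s_{1}s_{1}'\ge0$ or $s_{1}s_{1}'<0$ then forces $s_{1}=s_{1}'$, so $s=s'$, in the former case and $s_{1}=-s_{1}'$, i.e. $s'=\bar{s}=s_{0}-s_{1}\ell$, in the latter.

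The step I expect to be the main obstacle is the sign bookkeeping in the $2\ell$-th-root-of-unity part of (1): here $q^{\ell}=-1$, so every multiple-of-$\ell$ power of $q$ that appears when extracting the coefficient of $x^{r}$ contributes a sign, and one has to check by a modulo-$2$ computation that all these contributions, together with the $(-1)^{(\ell+1)r_{1}}$ coming from $(1-(-1)^{\ell}x^{\ell})^{s_{1}}$, collapse to the single closed form $(-1)^{(s_{1}+1)r_{1}\ell+s_{0}r_{1}-r_{0}s_{1}}$. A secondary point to keep track of is that the block decomposition is compatible with the normalization $0\le r_{0}<\ell$, which is automatic because ${\,s_{0}\,\brack\, r_{0}\,}=0$ whenever $r_{0}>s_{0}$ and $s_{0}<\ell$.
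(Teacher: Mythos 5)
The paper gives no proof of this lemma at all: it is imported verbatim from \cite{Lus, GH}, so there is no argument of the authors' to compare yours against, and I can only assess the proposal on its own terms. Your route — Gauss's factorization $\prod_{k=0}^{s-1}(1+q^{2k}x)=\sum_{r}q^{r(s-1)}{\,s\,\brack\,r\,}x^{r}$, the block identity $\prod_{j=0}^{\ell-1}(1+q^{2j}x)=1-(-1)^{\ell}x^{\ell}$ (valid in both admissible cases since $q^{2}$ is a primitive $\ell$-th root of unity), and extraction of the coefficient of $x^{r_{0}+r_{1}\ell}$ — is the standard proof of the $q$-Lucas theorem and it does deliver parts (1) and (2). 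I checked the sign you deferred: $r(s-1)-r_{0}(s_{0}-1)=\ell\,(r_{0}s_{1}+r_{1}s_{0}-r_{1}+r_{1}s_{1}\ell)$, so with $q^{\ell}=-1$ the factor $q^{r_{0}(s_{0}-1)-r(s-1)}$ contributes $(-1)^{r_{0}s_{1}+r_{1}s_{0}+r_{1}+r_{1}s_{1}\ell}$, and multiplying by $(-1)^{(\ell+1)r_{1}}$ gives total exponent $(s_{1}+1)r_{1}\ell+s_{0}r_{1}+r_{0}s_{1}\equiv(s_{1}+1)r_{1}\ell+s_{0}r_{1}-r_{0}s_{1}\pmod 2$, as required. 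The specialization $r_{0}=0$, $r_{1}=1$ plus the negation rule for $s_{1}<0$ correctly yields (2).

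The one genuine gap is in part (3), and it is not mere bookkeeping. When $\ell$ is even, $q$ is necessarily a primitive $2\ell$-th root of unity, part (2) gives ${\,s\,\brack\,\ell\,}=(-1)^{(s_{1}+1)\ell+s_{0}}s_{1}=(-1)^{s_{0}}s_{1}$, and $q^{s}=q^{s'}$ forces $s_{0}=s_{0}'$ together with $s_{1}\equiv s_{1}'\pmod 2$. The hypothesis ${\,s\,\brack\,\ell\,}={\,s'\,\brack\,\ell\,}$ then reads $(-1)^{s_{0}}s_{1}=(-1)^{s_{0}}s_{1}'$ and forces $s_{1}=s_{1}'$ outright; it does not ``force $s_{1}=-s_{1}'$'' in the subcase $s_{1}s_{1}'<0$ as you assert. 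Indeed ${\,\bar{s}\,\brack\,\ell\,}=-{\,s\,\brack\,\ell\,}$ whenever $s_{1}\neq 0$, so the subcase $s_{1}s_{1}'<0$ is incompatible with the stated hypotheses and the second alternative of (3) can only hold vacuously (or else the source \cite{GH} uses a weaker hypothesis, e.g.\ equality of the Gaussian binomials up to sign). You should either derive this incompatibility explicitly or verify the exact hypothesis in \cite{GH}; as written, your dichotomy for (3) does not follow from your own formulas in (2).
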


\subsection{Quantum (restricted) divided power algebras} A bi-addictive map $\ast:\mathbb{Z}^m\times \mathbb{Z}^m\rightarrow \mathbb{Z}$ is defined as follows:
$$\beta\ast \gamma=\sum\limits_{j=1}^{m-1}\sum\limits_{i>j}\beta_i\gamma_j,$$
for any $\beta=(\beta_1,\cdots,\beta_m)$, $\gamma=(\gamma_1,\cdots,\gamma_m)\in \mathbb{Z}^m$.

For $q\in \mathbb{K}^{\ast}$, the $quantum\ divided\ power\ algebra\ \mathcal{A}_q(m)$ is defined as a $\mathbb{K}$-vector space with a monomial basis $\{x^{(\beta)}\mid \beta\in \mathbb{Z}_+^m\}$ with $x^{(0)}=1$. The multiplication is given by
$$x^{(\beta)}x^{(\gamma)}=q^{\beta\ast\gamma}{\,\beta+\gamma\,\brack\, \beta\,}x^{(\beta+\gamma)},$$
where ${\,\beta+\gamma\,\brack\,\beta\,}:=\prod_{i=1}^{m}{\,\beta_i+\gamma_i\,\brack\,\beta_i\,}$ and ${\,\beta_i+\gamma_i\,\brack\,\beta_i\,}=[\,\beta_i+\gamma_i\,]!/[\,\beta_i\,]![\,\gamma_i\,]!$ for any $\beta_i$,$\gamma_i\in \mathbb{Z}_+$.
Briefly, we denote $x^{(\epsilon_i)}$ by $x_i$, for each $1\leq i\leq m$, with $\epsilon_i=(0,\cdots,0,\underset{i}1,0,\cdots,0)$.

In particular, when $\textbf{char}(q)=\ell\geq 3$, $\mathcal{A}_q(m,\mathbf{1})$ is a subalgebra of
$\mathcal{A}_q(m)$ with the basis $\{\,x^{(\beta)}\mid \beta\in \mathbb{Z}_+^m,\beta \leq \mathbf 1\,\}$,
where $\mathbf 1:=(\ell{-}1,\cdots,\ell{-}1)\in \mathbb{Z}_+^m$, called the quantum $restricted$ divided power algebra.
Here $\beta\leq \gamma$ means $\beta_i\leq \gamma_i$ for all $i$. Since $[\,\ell\,]=0$, it is clear that
$\dim\mathcal{A}_q(m,\mathbf{1})=\ell^m$.

\subsection{Quantum exterior superalgebra}
The {\it quantum exterior algebra} is defined as the quotient of the free associative algebra $\mathbb{K}\{x_{m+1},\cdots,x_{m+n}\}$ by the quadratic ideal satisfying the quantum antisymmetric relations as follows:
$$
\Lambda_q(n):=\mathbb{K}\{\,x_{m+1},\cdots,\,x_{m+n}\,\}/\langle\, x_i^2,\, x_jx_i+q^{-1}x_ix_j,\ j>i,\ i,\, j\in I_1\,\rangle,
$$
which is a comodule-algebra of the quantum general linear group $GL_q(n)$, dually, a module-algebra of the quantum algebra $U_q(\mathfrak{gl}(n))$.

\subsection{Quantum Manin superspace $A_q^{m|n}$}
As a generalization of Manin's quantum $m|0$-space $A_q^{m|0}$ and $0|n$-space $A_{q^{-1}}^{0|n}$ (see \cite{M}),  we started in \cite{FHRZ} to introduce the notation of {\it quantum Manin superspace $A_q^{m|n}$} on a $(m+n)$-dimensional $\mathbb Z_2$-graded vector representation $\textbf{V}$ of $U_q(\mathfrak{gl}(m|n))$.
\begin{defi}
Associated with the natural $U_q(\mathfrak{gl}(m|n))$-module $\textbf{V}$, the quantum Manin superspace $A_q^{m|n}$ is defined to be the quotient of the free associative algebra $\mathbb{K}\{v_i\mid i\in I\}$ over $\mathbb{K}$ by the quadratic ideal $I(\textbf{V})$ generated by
\begin{gather*}
v_jv_i-q\,v_iv_j,\quad \textit{for \ }\ i\in I_0,\ j\in I, \ j>i;\\
v_i^2,\ v_jv_i+q\,v_iv_j,\quad \textit{for \ }\ i,\ j\in I_1, \ j>i.
\end{gather*}
\end{defi}
As an associative $\mathbb{K}$-superalgebra, it has a monomial basis consisting of $\{v^{\langle\alpha,\mu\rangle}:=v^\alpha\otimes v^\mu\mid \alpha=(\alpha_1,\cdots,\alpha_m)\in \mathbb{Z}_+^m, \mu=\langle\mu_1,\cdots,\mu_n\rangle\in \mathds{Z}_2^n\}$, where $v^\alpha=v_1^{\alpha_1}\cdots v_m^{\alpha_m}$ and $v^\mu=v_{m+1}^{\mu_1}\cdots v_{m+n}^{\mu_n}$. By abuse of notation, it is convenient for us to consider $(m{+}n)$-tuple $\langle \alpha,\mu\rangle\in \mathbb{Z}_+^{m+n}$, for any $\alpha=(\alpha_1,\cdots,\alpha_m)\in \mathbb{Z}_+^m, \mu=\langle \mu_1,\cdots,\mu_n\rangle\in \mathds{Z}_2^n$. Then we extend the product $\ast$ to the $(m{+}n)$-tuples in $\mathbb{Z}^{m+n}$, satisfying the following relation:
$$
\langle\alpha,\mu\rangle\ast\langle\beta,\nu\rangle
=\alpha\ast\beta+\mu\ast\nu+\mu\ast\beta=\alpha\ast\beta+\mu\ast\nu+|\,\mu\,|\cdot|\,\beta\,|
\,,
$$
with $\alpha, \, \beta\in \mathbb{Z}_+^m$, $\mu,\nu\in \mathds{Z}_2^n$, where
$|\,\mu\,|=\sum_i\mu_i$ and $|\,\beta\,|=\sum_j\beta_j$.

\subsection{Quantum Grassmann superalgebra $\Omega_q(m|n)$ as $\mathcal U_q(\mathfrak{gl}(m|n))$-module superalgebra}

Let us denote by $\mathcal{U}_q:=U_q(\mathfrak{gl}(m|n))$, $U_q(\mathfrak{sl}(m|n))$ respectively, or the restricted object --- the small quantum superalgebra $u_q(\mathfrak{gl}(m|n))$, $u_q(\mathfrak{sl}(m|n))$, respectively, in the root of unity case when $\textbf{char}(q)=\ell>2$. They have the same generators.

Consider the tensor space of the quantum divided power algebra and the quantum exterior algebra, which is the quantum divided power version of the quantum Manin superspace $A_q^{m|n}$ we defined above,
$$\Omega_q(m|n):=\mathcal{A}_q(m)\otimes_{\mathbb{K}}\Lambda_{q^{-1}}(n).$$
\begin{defi} $($\cite{FHRZ}$)$
The quantum Grassmann superalgebra $\Omega_q(m|n)$ is defined as an associative $\mathbb{K}$-superalgebra on a superspace over $\mathbb{K}$ with the multiplication given by
\begin{equation}
(x^{(\alpha)}\otimes x^\mu)\star(x^{(\beta)}\otimes x^\nu)=q^{\mu\ast\beta}x^{(\alpha)}x^{(\beta)}\otimes x^\mu x^\nu,
\end{equation}
for any $x^{(\alpha)},x^{(\beta)}\in \mathcal{A}_q(m), x^\mu,x^\nu\in \Lambda_{q^{-1}}(n)$. 

When $\textbf{char}(q)=\ell\geq 3$, $\Omega_q(m|n,\mathbf {1}):=\mathcal{A}_q(m,\mathbf{1})\otimes \Lambda_{q^{-1}}(n)$ is a sub-superalgebra, we call it the quantum restricted Grassmann superalgebra.
\end{defi}
Clearly, 
$\{\,x^{(\alpha)}\otimes x^\mu\mid \alpha\in \mathbb{Z}_+^m, \ \mu\in \mathds{Z}_2^n\,\}
$ forms a monomial $\mathbb{K}$-basis of $\Omega_q(m|n)$, and the set
$$
\{\,x^{(\alpha)}\otimes x^\mu\mid \alpha\in \mathbb{Z}_+^m, \ \alpha\leq \textbf{1}, \ \mu\in \mathbb{Z}_2^n\,\}
$$
forms a monomial $\mathbb{K}$-basis of $\Omega_q(m|n,\textbf{1})$.
The $\mathcal{U}_q$-module algebra structure over the quantum (restricted) Grassmann superalgebra $\Omega_q(m|n)$ ($\Omega_q(m|n,\textbf{1})$) is defined by Theorem 27 \cite{FHRZ}, where $\mathcal{U}_q:=U_q(\mathfrak{gl}(m|n))$, $U_q(\mathfrak{sl}(m|n))$ respectively, or the restricted object $u_q(\mathfrak{gl}(m|n))$, $u_q(\mathfrak{sl}(m|n))$, respectively, when $\textbf{char}(q)=\ell>2$.

The following $\mathcal U_q$-action formulae will be extremely useful to analysing the submodules structure of $\Omega_q(m|n)$, especially for $\Omega_q^{(s)}(m|n)$ as $\mathfrak u_q(\mathfrak{gl}(m|n))$-modules in the root of unity case.
\begin{proposition} $($Theorem 27 \cite{FHRZ}$)$
For any $x^{(\alpha)}\otimes x^\mu \in \Omega_q(m|n)$, we have the following acting formulae:

\text{\rm (i)} for $i\in I_0 \setminus\{ m\}$, we have
\begin{equation}
\begin{split}
e_i(x^{(\alpha)}\otimes x^\mu)&=[\,\alpha_i{+}1\,]\,x^{(\alpha+\varepsilon_i-\varepsilon_{i+1})}\otimes x^\mu,\\
f_i(x^{(\alpha)}\otimes x^\mu)&=[\,\alpha_{i+1}{+}1\,]\,x^{(\alpha-\varepsilon_i+\varepsilon_{i+1})}\otimes x^\mu,\\
K_i(x^{(\alpha)}\otimes x^\mu)&=q^{\alpha_i}x^{(\alpha)}\otimes x^\mu,\\
\mathcal{K}_i(x^{(\alpha)}\otimes x^\mu)&=q^{\alpha_i-\alpha_{i+1}}x^{(\alpha)}\otimes x^\mu;
\end{split}
\end{equation}

\text{\rm (ii)} for $i=m$, we have
\begin{equation}
\begin{split}
e_m(x^{(\alpha)}\otimes x^\mu)&=\delta_{1,\mu_1}[\,\alpha_m{+}1\,]\,x^{(\alpha+\varepsilon_m)}\otimes x^{\mu-\varepsilon_{m+1}},\\
f_m(x^{(\alpha)}\otimes x^\mu)&=\delta_{0,\mu_1}x^{(\alpha-\varepsilon_m)}\otimes x^{\mu+\varepsilon_{m+1}},\\
K_m(x^{(\alpha)}\otimes x^\mu)&=q^{\alpha_m}x^{(\alpha)}\otimes x^\mu,\\
K_{m+1}(x^{(\alpha)}\otimes x^\mu)&=q^{-\mu_1}x^{(\alpha)}\otimes x^\mu,\\
\mathcal{K}_m(x^{(\alpha)}\otimes x^\mu)&=q^{\alpha_m+\mu_1}x^{(\alpha)}\otimes x^\mu;
\end{split}
\end{equation}

\text{\rm (iii)} for $i\in I_1$, we have
\begin{equation}
\begin{split}
e_i(x^{(\alpha)}\otimes x^\mu)&=\delta_{0,\mu_i}\delta_{1,\mu_{i+1}}x^{(\alpha)}\otimes x^{\mu+\varepsilon_i-\varepsilon_{i+1}},\\
f_i(x^{(\alpha)}\otimes x^\mu)&=\delta_{1,\mu_i}\delta_{0,\mu_{i+1}}x^{(\alpha)}\otimes x^{\mu-\varepsilon_i+\varepsilon_{i+1}},\\
K_i(x^{(\alpha)}\otimes x^\mu)&=q^{-\mu_i}x^{(\alpha)}\otimes x^\mu ,\\
\mathcal{K}_i(x^{(\alpha)}\otimes x^\mu)&=q^{\mu_{i+1}-\mu_i}x^{(\alpha)}\otimes x^\mu, \quad i\neq m{+}n,
\end{split}
\end{equation}
where $e_i,\, f_i,\, K_j$ $(1\leq i< m{+}n$, $1\leq j\leq m{+}n)$ are the generators of $U_q(\mathfrak{gl}(m|n))$ with $q$ generic or $\mathfrak u_q(\mathfrak{gl}(m|n))$ in the root of unity case.
\end{proposition}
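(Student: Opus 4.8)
The plan is to realize every generator of $\mathcal U_q$ as an explicit first-order quantum differential operator on $\mathcal A_q(m)\otimes_{\mathbb K}\Lambda_{q^{-1}}(n)$, to check that these operators descend to $\Omega_q(m|n)$ and satisfy the defining relations (R1)--(R7), and then to evaluate them on the monomial basis $\{x^{(\alpha)}\otimes x^\mu\}$ to read off (i)--(iii). On the bosonic factor one uses the quantum partial derivatives $\partial_i$ with $\partial_i x^{(\beta)}=x^{(\beta-\varepsilon_i)}$ (zero if $\beta_i=0$, and up to a monomial power of $q$) together with the left multiplications $x_i\cdot\,$; on the fermionic factor one uses the odd skew-derivations $\partial_j$ and left multiplications $x_j\cdot\,$, suitably $q^{-1}$-dressed to be compatible with $x_j x_i+q\,x_i x_j$ $(j>i)$ and $x_i^2=0$. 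Letting $K_i$ act by the weight scalar recorded in (i)--(iii), one sets, for $i\in I_0\setminus\{m\}$, the operators $e_i,f_i$ to be the grading-twisted versions of $x_i\partial_{i+1}$ and $x_{i+1}\partial_i$; for $i\in I_1$ the analogous expressions built from the fermionic operators; and for $i=m$ the mixed Bose--Fermi expressions coupling the two tensor factors at the node $m$. The first, routine, check is that each operator annihilates the defining ideals of $\mathcal A_q(m)$ and of $\Lambda_{q^{-1}}(n)$ and is compatible with the cross-factor $q^{\mu\ast\beta}$ in the product $\star$; this is a computation on quadratic generators.

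The principal step is to verify (R1)--(R7). The Cartan relations (R1) and the commutation relations (R2) between the $K_i$ and $e_j,f_j$ reduce to matching powers of $q$ in the weight shifts. The $q$-Heisenberg relation (R3) is the genuinely computational one: for $i\ne j$ the two composites $e_if_j$ and $(-1)^{p(e_i)p(f_j)}f_je_i$ act identically on every monomial; for $i=j$ it is the rank-one identity --- a $q$-boson commutator for $i\in I_0\setminus\{m\}$, a $q$-fermion anticommutator for $i\in I_1$, and a mixed Bose--Fermi anticommutator for $i=m$ --- whose right-hand side, after inserting the scalars of $\mathcal K_i$ from (i)--(iii), is exactly $\tfrac{\mathcal K_i-\mathcal K_i^{-1}}{q_i-q_i^{-1}}$; e.g.\ at $i=m$ one checks directly that $(e_mf_m+f_me_m)(x^{(\alpha)}\otimes x^\mu)=[\,\alpha_m+\mu_1\,]\,x^{(\alpha)}\otimes x^\mu$, matching $\mathcal K_m\mapsto q^{\alpha_m+\mu_1}$ and $q_m=q$, and for $i\in I_0\setminus\{m\}$ one gets $[\,\alpha_i-\alpha_{i+1}\,]$. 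Relation (R6), $e_m^2=f_m^2=0$, is immediate once the formulae of (ii) are in hand, since $e_m$ (resp.\ $f_m$) produces a monomial with $\mu_1=0$ (resp.\ $\mu_1=1$), which a second application kills. The quantum Serre relations (R4), (R5) are the usual rank $\le 2$ computations, and the exotic fourth-order relation (R7) at the triple $(m-1,m,m+1)$ is a finite rank $3$ computation: it suffices to test it on monomials, where both sides act by explicit scalar combinations which one matches using the Pascal-type identity ${\,m\,\brack\, r\,}=q^{r-m}{\,m-1\,\brack\, r-1\,}+q^r{\,m-1\,\brack\, r\,}$.

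Once the assignment $e_i,f_i,K_i\mapsto(\text{operators})$ is an algebra homomorphism $\mathcal U_q\to\mathrm{End}_{\mathbb K}(\Omega_q(m|n))$, the module-superalgebra property for the Hopf structure ($\Delta(e_j)=e_j\otimes\mathcal K_j+1\otimes e_j$, etc.) follows because $e_j,f_j$ were manufactured as grading-twisted (super)derivations, so the twisted Leibniz rule holds on the quadratic generators of the Manin superspace and then propagates by the module-algebra axiom. Finally, evaluating on an arbitrary $x^{(\alpha)}\otimes x^\mu$ and using the divided-power rule $x^{(\beta)}x^{(\gamma)}=q^{\beta\ast\gamma}{\,\beta+\gamma\,\brack\,\beta\,}x^{(\beta+\gamma)}$ --- which produces the coefficient ${\,\alpha_i+1\,\brack\,1\,}=[\,\alpha_i+1\,]$ in the bosonic directions --- together with the exterior relations --- which produce the Kronecker-$\delta$ factors in the fermionic directions --- yields exactly formulae (i), (ii), (iii). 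I expect the main obstacle to be bookkeeping rather than conceptual: keeping the cross-terms $q^{\mu\ast\beta}$, the parities $p(e_i)=p(f_i)=\delta_{im}$, the grading twists forced by the non-cocommutative coproduct, and the divided-power $q$-binomials simultaneously consistent so that the scalars come out precisely as stated --- the delicate node being $i=m$, where the bosonic $x_m$- and fermionic $x_{m+1}$-directions interact and where (R6), (R7) and the odd part of (R3) all live.
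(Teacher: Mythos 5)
Your plan follows essentially the same route as the source of this statement: the paper itself gives no proof here but imports it verbatim as Theorem 27 of \cite{FHRZ}, where $U_q(\mathfrak{gl}(m|n))$ is realized exactly as you propose, namely as a superalgebra of quantum differential operators (weight-twisted products of left multiplications and $q$-derivatives on the bosonic and fermionic factors) acting on the quantum Manin superspace, with the relations (R1)--(R7) and the module-superalgebra axiom verified on monomials. Your spot checks are consistent with the stated formulae --- e.g.\ $(e_mf_m+f_me_m)(x^{(\alpha)}\otimes x^\mu)=[\,\alpha_m+\mu_1\,]\,x^{(\alpha)}\otimes x^\mu$ and $e_if_i-f_ie_i=[\,\alpha_i-\alpha_{i+1}\,]$ for $i\in I_0\setminus\{m\}$ both follow from (i)--(ii) --- so, modulo carrying out the deferred finite verifications of (R5) and (R7), the proposal is correct and not a different argument.
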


Write $x^{\langle\alpha,\mu\rangle}:=x^{(\alpha)}\otimes x^\mu$ with $\alpha\in \mathbb{Z}_+^m,\ \mu\in \mathbb{Z}_2^n$ for brevity. Denote by $|\alpha|=\Sigma \alpha_i,\, |\mu|=\Sigma \mu_i$ the degree of $x^{(\alpha)}$ and $x^\mu$, respectively, then $|\langle \alpha,\mu\rangle|=|\alpha|+|\mu|=\Sigma \alpha_i+\Sigma \mu_i$ is the degree of $x^{(\alpha)}\otimes x^\mu\in \Omega_q(m|n)$. Set $\Omega_q:=\Omega_q(m|n)$ or $\Omega_q(m|n,\textbf{1})=$, where $\textbf{1}=(\ell{-}1,\ell{-}1,\cdots,\ell{-}1)\in \mathbb{Z}_+^m$,
and 
$$
\Omega_q^{(s)}:=\text{Span}_\mathbb{K}\{x^{(\alpha)}\otimes x^\mu\in\Omega_q\mid |\langle \alpha,\mu\rangle|=s\}
$$ 
is the subspace of $\Omega_q$ spanned by homogeneous elements of degree $s$.

Using the notations above, we restate Theorem 28 in \cite{FHRZ} as follows:
\begin{theorem}
Each subspace $\Omega_q^{(s)}(m|n)$ is a $U_q(\mathfrak{gl}(m|n))$-submodule of $\Omega_q(m|n)$, and $\Omega^{(s)}_q(m|n,\mathbf{1})$ is a $u_q(\mathfrak{gl}(m|n))$-submodule of $\Omega_q(m|n,\mathbf{1})$.

\text{\rm (1)} If $\textbf{char}(q)=0$, then each submodule $\Omega_q^{(s)}(m|n)\cong V(s\omega_1)$ is a simple module generated by highest weight vector $x^{(\underline{\textbf{s}})}\otimes 1$, where $\underline{\textbf{s}}=s\epsilon_1=(s,0,\cdots,0)$.

\text{\rm (2)} If $\textbf{char}(q)=\ell>2$, then

\text{\rm (i)} when $s=(j{-}1)(\ell{-}1)+s_j$ with $0\leq s_j\leq \ell{-}1,\, j\in I_0$, the submodule $\Omega_q^{(s)}(m|n,\mathbf{1})\cong V((\ell{-}1{-}s_j)\omega_{j-1}+s_j\omega_j)$ is a simple module generated by highest weight vector $x^{(\underline{\textbf{s}})}$, where $\underline{\textbf{s}}=(\ell{-}1,\cdots,\ell{-}1, s_j, 0, \cdots, 0)$, and $\omega_j=\epsilon_1+\cdots+\epsilon_j$ is the $j$-th fundamental weight. 

\text{\rm (ii)} when $s=m(\ell{-}1)+j$, $0\leq j\leq n$, the submodule 
$\Omega^{(s)}_q(m|n,\mathbf{1})\cong V((\ell{-}2)\omega_m+\omega_{m+j})$ is a simple module generated by highest weight vector $x^{(\ell{-}1,\cdots,\ell{-}1)}\otimes x_{m+1}\cdots x_{m+j}$.
\end{theorem}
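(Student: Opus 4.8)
The plan is to treat all three cases by one scheme: exhibit an explicit highest weight vector $v_0$; observe that every weight space of the ambient superalgebra is at most one-dimensional; and show that $v_0$ can be carried to an arbitrary monomial of the same total degree by lowering operators, and back, with a nonzero scalar at each step. The first point makes $\Omega_q^{(s)}$ a highest weight module, the last two make it irreducible, and together they identify it with the asserted $V(\lambda)$.

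First I would record that $\Omega_q^{(s)}(m|n)$, resp.\ $\Omega_q^{(s)}(m|n,\mathbf 1)$, is a $\mathcal U_q$-submodule. By the acting formulae (Theorem~27 of \cite{FHRZ}), each generator sends a monomial $x^{(\alpha)}\otimes x^\mu$ to a scalar times a monomial of the same total degree $|\langle\alpha,\mu\rangle|=|\alpha|+|\mu|$: for $i\ne m$ the $e_i,f_i$ act by the degree-preserving shift $\pm(\varepsilon_i-\varepsilon_{i+1})$ of $\alpha$ (if $i<m$) or of $\mu$ (if $i>m$); $e_m,f_m$ trade one unit of $|\alpha|$ for one unit of $|\mu|$; and the $K_i$ act diagonally. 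In the restricted case any shift that would raise some $\alpha_i$ to $\ell$ carries the factor $[\ell]=0$, so $\Omega_q(m|n,\mathbf 1)$ is preserved. Moreover $K_1,\dots,K_{m+n}$ act on $x^{(\alpha)}\otimes x^\mu$ by $q^{\alpha_1},\dots,q^{\alpha_m},q^{-\mu_1},\dots,q^{-\mu_n}$, and this tuple recovers $(\alpha,\mu)$ both when $q$ is generic and in the restricted root-of-unity case, since there each $\alpha_i$ ranges over an interval of length $\ell\le\mathrm{ord}(q)$ and each $\mu_i\in\{0,1\}$. Hence every weight space of $\Omega_q(m|n)$, resp.\ $\Omega_q(m|n,\mathbf 1)$, is spanned by a single monomial.

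Next I would check that the claimed $v_0$ --- namely $x^{(\underline{\mathbf s})}\otimes 1$ in cases (1) and (2)(i), and $x^{(\ell-1,\dots,\ell-1)}\otimes x_{m+1}\cdots x_{m+j}$ in case (2)(ii) --- is killed by every $e_i$, $i\in J$: an $e_i$ with $i<m$ either meets a slot of $\alpha$ equal to $\ell-1$ (factor $[\ell]=0$) or would send a slot below zero; $e_mv_0=0$ because either $\mu_1=0$ (all cases save (2)(ii) with $j\ge1$) or $\alpha_m=\ell-1$, so the factor $[\ell]=0$; and an $e_i$ with $i>m$ requires $\mu_i=0<\mu_{i+1}=1$, impossible for the left-justified $\mu$ of $v_0$. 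Reading the $K_i$-eigenvalues of $v_0$ and re-expressing them through the $\omega_i$ gives exactly $s\omega_1$, $(\ell{-}1{-}s_j)\omega_{j-1}+s_j\omega_j$, and $(\ell{-}2)\omega_m+\omega_{m+j}$, respectively.

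The crux, and what I expect to be the main obstacle, is the reachability statement: (a) $\mathcal U_q v_0=\Omega_q^{(s)}$, and (b) from every monomial of degree $s$ some word in the $e_i$ yields a nonzero multiple of $v_0$. Granting these, a nonzero submodule --- being a sum of one-dimensional weight spaces, hence containing a monomial --- contains $v_0$ by (b) and therefore all of $\Omega_q^{(s)}$ by (a), so $\Omega_q^{(s)}$ is irreducible, and being a highest weight module of the highest weight computed above it is the irreducible module $V(\lambda)$. For (a) I would run a ``drain-and-spread'' procedure: slide bosonic degree toward the $m$-th slot with the $f_i$ ($i<m$), convert units sitting in the $m$-th slot into new fermions one at a time with $f_m$, push fermions rightward with the $f_i$ ($i>m$), and finally redistribute the residual bosonic degree with the $f_i$ ($i<m$) again; then (b) follows by reading the same word in reverse, each reversed step being the corresponding raising operator acting with, once more, a nonzero coefficient. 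Two combinatorial subtleties need care: fermions must be created (and the pre-existing ones in case (2)(ii) moved aside) in order of decreasing target slot, so that each one can slide into its place through empty slots; and --- decisively at a root of unity --- every bosonic slot must be kept within $\{0,\dots,\ell-1\}$ throughout, so that only the nonzero $q$-integers $[1],\dots,[\ell-1]$ ever arise. The vector $v_0$ is chosen as the ``maximally left-packed'' state of degree $s$ precisely so that this bookkeeping closes; verifying that the procedure reaches every degree-$s$ monomial without meeting a vanishing coefficient is the real work.
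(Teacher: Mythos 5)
The paper itself gives no proof of this statement---it is imported verbatim as Theorem 28 of \cite{FHRZ}---so there is nothing internal to compare against; judged on its own terms, your scheme is the standard and correct one for results of this type, and it is the same architecture the cited proof must use: one-dimensional weight spaces (the tuple of $K_i$-eigenvalues recovers $\langle\alpha,\mu\rangle$ because each $\alpha_i<\ell\le\mathrm{ord}(q)$ in the restricted case), an explicit singular vector $x^{(\underline{\mathbf s})}\otimes x^\mu$ whose annihilation by the $e_i$ is forced either by $[\ell]=0$ or by a slot going negative, and two-way reachability between $v_0$ and an arbitrary degree-$s$ monomial. Your logical reduction is sound: a nonzero submodule is $K$-stable, hence contains a monomial, hence contains $v_0$ by (b), hence is everything by (a). The only substantive caveat is that you defer exactly the part that constitutes the theorem, namely that the drain-and-spread word never meets a vanishing coefficient. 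Your diagnosis of the two danger points is the right one, but be aware that the left-justification in (b) is a genuine cascade, not a single sweep: to pull a unit from slot $j$ into slot $j-1$ you need $\alpha_{j-1}<\ell-1$, so when intermediate slots are already saturated you must first clear them by working from the left outward (e.g.\ $(\ell{-}2,\ell{-}1,1)\to(\ell{-}1,\ell{-}2,1)\to(\ell{-}1,\ell{-}1,0)$); similarly, destroying a fermion with $e_m$ requires $\alpha_m<\ell-1$, which is always arrangeable in case (2)(i) because $|\alpha|<m(\ell-1)$ there, but this needs to be said. With those two cascades written out, the proposal closes and reproduces the cited result.
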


\begin{lemma}
  $\dim\Omega_q^{(s)}(m|n,\mathbf{1})=\sum\limits_{i=0}^s\sum\limits_{j=0}^{\lfloor \frac {s-i}{\ell}\rfloor}(-1)^j\binom{n}{i}\binom{m}{j}\binom{m{+}s{-}i{-}j\ell{-}1}{m{-}1}$, where $\lfloor x\rfloor$ means the integer part of $x\in \mathbb{Q}$.
\end{lemma}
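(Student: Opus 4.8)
The plan is to reduce the statement to a purely enumerative identity and then evaluate a bounded–composition count by inclusion--exclusion; no input beyond the monomial basis is needed. First I would invoke the explicit $\mathbb K$-basis of $\Omega_q(m|n,\mathbf 1)$ recorded above: it consists of the monomials $x^{(\alpha)}\otimes x^\mu$ with $\alpha\in\mathbb Z_+^m$, $\alpha\le\mathbf 1=(\ell{-}1,\dots,\ell{-}1)$, and $\mu\in\mathbb Z_2^n$, the degree of such a monomial being $|\alpha|+|\mu|$. Hence $\dim\Omega_q^{(s)}(m|n,\mathbf 1)$ is exactly the number of pairs $(\alpha,\mu)$ with $0\le\alpha_k\le\ell{-}1$ for all $k$, $\mu_j\in\{0,1\}$ for all $j$, and $\sum_k\alpha_k+\sum_j\mu_j=s$.

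Next I would stratify by the integer $i:=|\mu|=\mu_1+\cdots+\mu_n$. For fixed $i$ there are precisely $\binom ni$ vectors $\mu\in\{0,1\}^n$ with $|\mu|=i$ (and this is $0$ unless $0\le i\le\min(s,n)$), while the complementary freedom is a vector $\alpha\in\{0,1,\dots,\ell{-}1\}^m$ with $|\alpha|=s-i$. Writing $N(t)$ for the number of $\alpha\in\mathbb Z^m$ with $0\le\alpha_k\le\ell{-}1$ and $\sum_k\alpha_k=t$, we obtain $\dim\Omega_q^{(s)}(m|n,\mathbf 1)=\sum_{i=0}^{s}\binom ni\,N(s-i)$, so the whole lemma comes down to the formula $N(t)=\sum_{j\ge 0}(-1)^j\binom mj\binom{t-j\ell+m-1}{m-1}$.

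To prove that formula I would run inclusion--exclusion on the "overflow" events $A_k:=\{\alpha_k\ge\ell\}$ inside the set of all $\alpha\in\mathbb Z_+^m$ with $\sum_k\alpha_k=t$, whose cardinality is $\binom{t+m-1}{m-1}$ by the standard stars-and-bars count; for a prescribed $j$-element index set the substitution $\alpha_k\mapsto\alpha_k-\ell$ on those coordinates gives a bijection onto the solutions of $\sum\alpha_k=t-j\ell$ in $\mathbb Z_+^m$, contributing $\binom{t-j\ell+m-1}{m-1}$. (Equivalently, $N(t)=[x^t]\,(1+x+\cdots+x^{\ell-1})^m=[x^t]\,(1-x^\ell)^m(1-x)^{-m}$, and expanding the two factors as power series reproduces the same sum.) Substituting $t=s-i$ and combining with the stratification of the previous paragraph yields exactly the asserted double sum.

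The lemma is therefore elementary, and the only point I would treat carefully is matching the summation ranges to those in the statement: I would check that the inner index may be cut off at $j=\lfloor(s-i)/\ell\rfloor$ because for $j>\lfloor(s-i)/\ell\rfloor$ either $\binom mj=0$ or else $s-i-j\ell<0$ forces the top entry of $\binom{m+s-i-j\ell-1}{m-1}$ to lie strictly below $m-1$, killing that term, and conversely that no genuine contribution is discarded since $\lfloor(s-i)/\ell\rfloor<m$ whenever $s-i\le m(\ell{-}1)$ (and both sides vanish for $s-i>m(\ell{-}1)$). This bookkeeping, rather than any representation-theoretic ingredient, is the one place that needs attention.
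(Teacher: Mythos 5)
Your proof is correct and follows the same route as the paper: the paper's one-line proof likewise reduces $\dim\Omega_q^{(s)}(m|n,\mathbf 1)$ to the convolution $\sum_{i+j=s}\dim\mathcal A_q^{(j)}(m,\mathbf 1)\cdot\dim\Lambda_{q^{-1}}(n)_{(i)}$ and then cites Corollary 2.6 of \cite{GH} for the bounded-composition count that you instead derive explicitly by inclusion--exclusion. Your version is simply a self-contained expansion of the paper's citation, and your bookkeeping on the truncation of the inner sum is sound.
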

\begin{proof} $\dim\,\Omega_q^{(s)}(m|n,\mathbf{1})=\sum\limits_{i+j=s}\dim \mathcal A_q^{(j)}(m,\mathbf 1)\cdot\dim\,\Lambda_{q^{-1}}(n)_{(i)}$ follows from
Corollary 2.6 of \cite{GH}.
\end{proof}

\subsection{Convention}
In the rest of this paper, we will focus our discussion on the case when $\mathbb{Q}(q)\subseteq \mathbb{K}$, $\textbf{char}(q)=\ell>2$ and $\mathcal U_q=u_q(\mathfrak{gl}(m|n))$ with $m\ge 2$.

\section{Truncated $\mathcal U_q$-modules $\Omega_q^{(s)}(m|n,\textbf{r})$: energy grades and stratification} 

\subsection{Graded truncated objects $\Omega_q^{(s)}(m|n,\textbf{r})$}
Now we consider a class of graded truncated objects $\Omega_q^{(s)}(m|n,\textbf{r})$ of $\Omega_q(m|n)$, where $\textbf{r}=(r\ell{-}1,r\ell{-}1,\cdots,r\ell{-}1)\in \mathbb{Z}_+^m, \ r\in \mathbb{N}$, and
\begin{gather*}
\Omega_q(m|n,\textbf{r}):=\text{Span}_\mathbb{K}\{\,x^{(\alpha)}\otimes x^\mu\in \Omega_q(m|n)\mid \alpha\leq \textbf{r}\,\}\subseteq \Omega_q(m|n),\\
\Omega_q^{(s)}(m|n,\textbf{r}):=\text{Span}_\mathbb{K}\{\,x^{(\alpha)}\otimes x^\mu\in \Omega_q(m|n)\mid |\langle\alpha,\mu\rangle|=s\,\},
\end{gather*}
then $\Omega_q(m|n,\textbf{r})=\bigoplus_{s=0}^N\Omega_q^{(s)}(m|n,\textbf{r}),$ where $N=|\textbf{r}|+n=m(r\ell{-}1)+n$.

By Proposition 4 and $\dim\mathcal A_q^{(j)}(m, \mathbf r)$ in Proposition 3.1 in \cite{GH}, similar to Lemma 6, we have
\begin{proposition}
\text{\rm (1)} $\Omega_q^{(s)}(m|n,\textbf{r})$ are $\mathcal U_q$-submodules of $\Omega_q^{(s)}(m|n)$, where $0\leq s\leq N$.

\text{\rm(2)} $\dim\,\Omega_q^{(s)}(m|n,\textbf{r})=\sum\limits_{i=0}^{s}\sum\limits_{j=0}^{\lfloor\frac{s-i}{r\ell}\rfloor}(-1)^j\binom{n}{i} \binom{m}{j}\binom{m{+}s{-}i{-}jr\ell{-}1}{ m{-}1}$.
\end{proposition}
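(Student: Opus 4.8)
The plan is to reduce both assertions to facts already available for $\Omega_q(m|n)$ and for the quantum divided power algebra $\mathcal A_q(m,\mathbf r)$. For part (1), the key observation is that the grading and the $\mathcal U_q$-action are compatible in a way that respects the truncation $\alpha\le\mathbf r$. Concretely, I would inspect the action formulae in Proposition 4: the generators $e_i,f_i$ for $i\in I_0\setminus\{m\}$ only shift $\alpha$ by $\varepsilon_i-\varepsilon_{i+1}$ (total degree $0$, and since $\mathbf r$ has equal entries, $\alpha\le\mathbf r$ is preserved up to the usual divided-power truncation, exactly as in $\mathcal A_q(m,\mathbf r)$); the generator $e_m$ raises $\alpha_m$ by one but simultaneously lowers $\mu_1$, and the coefficient $[\alpha_m+1]$ vanishes precisely when $\alpha_m+1\equiv 0\pmod\ell$ is about to overflow the block structure; $f_m$ lowers $\alpha_m$; and the $e_i,f_i$ for $i\in I_1$ act only on the exterior part. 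The point is that the $e_m$-action together with the vanishing of $[\,\alpha_m{+}1\,]$ at the boundary is what makes $\Omega_q(m|n,\mathbf r)$ closed — this is the same mechanism by which $\mathcal A_q(m,\mathbf r)$ is a subalgebra of $\mathcal A_q(m)$, and Proposition~3.1 of \cite{GH} records that $\mathcal A_q(m,\mathbf r)$ is a $\mathcal U_q$-submodule. So I would write $\Omega_q(m|n,\mathbf r)=\mathcal A_q(m,\mathbf r)\otimes\Lambda_{q^{-1}}(n)$ as $\mathcal U_q$-modules and check generator-by-generator that the action stays inside this subspace; since the degree $|\langle\alpha,\mu\rangle|$ is preserved by every generator (each $e_i$ raises one $\mathbb Z_+$-coordinate and lowers an adjacent one, or trades an $\alpha$-unit for a $\mu$-unit), each homogeneous piece $\Omega_q^{(s)}(m|n,\mathbf r)$ is itself a submodule. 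This parallels exactly the argument for $\Omega_q^{(s)}(m|n)$ in Theorem~5, restricted to the truncation.

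For part (2), the dimension count is purely combinatorial and, as the statement itself signals ("similar to Lemma~6"), follows the template of Lemma~6. Since the multiplication in $\Omega_q(m|n)$ is $\mathcal A_q(m)\otimes\Lambda_{q^{-1}}(n)$ as graded vector spaces, the degree-$s$ piece decomposes as
\[
\dim\Omega_q^{(s)}(m|n,\mathbf r)=\sum_{i+j=s}\dim\mathcal A_q^{(j)}(m,\mathbf r)\cdot\dim\Lambda_{q^{-1}}(n)_{(i)}.
\]
Here $\dim\Lambda_{q^{-1}}(n)_{(i)}=\binom ni$ because the quantum exterior algebra has the same Hilbert series as the classical one, and $\dim\mathcal A_q^{(j)}(m,\mathbf r)=\sum_{k\ge0}(-1)^k\binom mk\binom{m+j-kr\ell-1}{m-1}$ is precisely the formula from Proposition~3.1 of \cite{GH} (inclusion–exclusion over the $m$ truncation constraints $\alpha_i\le r\ell-1$, each exclusion removing a solid of degree-shift $r\ell$). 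Substituting $j=s-i$ and renaming $k$ to $j$ yields exactly the double sum in the statement, with the range $0\le j\le\lfloor (s-i)/r\ell\rfloor$ coming from the requirement $s-i-jr\ell\ge0$ for the binomial $\binom{m+s-i-jr\ell-1}{m-1}$ to be nonzero. I would present this as a short computation and cite \cite{GH} for the two input Hilbert series.

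The only genuine obstacle is the verification that $e_m$ and $f_m$ — the odd generators, which mix the divided-power and exterior parts — keep $\Omega_q(m|n,\mathbf r)$ invariant; for these one must check that whenever $e_m$ would push $\alpha_m$ past $r\ell-1$, the scalar $[\,\alpha_m{+}1\,]$ is already zero (this uses $[\,r\ell\,]=0$, i.e. $\mathbf{char}(q)=\ell$), so no basis vector with $\alpha\not\le\mathbf r$ is ever produced. Everything else is bookkeeping: degree preservation is immediate from the shift vectors in Proposition~4, and the dimension identity is a direct substitution into two known Hilbert series.
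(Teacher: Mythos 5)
Your proposal is correct and follows essentially the same route as the paper, which justifies the proposition in one line by citing Proposition 4 (for closure of the truncated homogeneous pieces under the generators, with the vanishing of $[\,r\ell\,]$ handling the boundary case for $e_i$ and $e_m$) and the formula for $\dim\mathcal A_q^{(j)}(m,\mathbf r)$ from Proposition 3.1 of \cite{GH} combined with the tensor decomposition used in Lemma 6. You have simply written out the details the paper leaves implicit.
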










\subsection{Energy grades and action rules}
For the quantum (restricted) general linear Lie super-algebras $\mathcal U_q$ at roots of unity, we introduce the trucialconcept ``energy grade" of elements in the $\mathcal U_q$-modules $\Omega_q^{(s)}$ to describe their $\ell$-adic behavior in the so-called ``modular'' representation theory. As will be seen, it captures the essential structural feature of the indecomposable modules in the root of unity case, due to the explicit action formulas given in Proposition 4.
\begin{defi}
For any $x^{(\alpha)}\otimes x^\mu\in\Omega_q= \Omega_q(m|n,\textbf{r})$ or $\Omega_q(m|n)$, the {\bf energy grade} of $x^{(\alpha)}\otimes x^\mu $, denoted by $ \text{Edeg}\, (x^{(\alpha)}\otimes x^\mu)$, is defined as the one of $x^{(\alpha)}:$
\begin{equation}
\text{Edeg}\, (x^{(\alpha)}\otimes x^\mu):=\text{Edeg}\, (x^{(\alpha)})=\sum\limits_{i=1}^m\lfloor\frac{\alpha_i}{\ell}\rfloor=\sum\limits_{i=1}^m\text{Edeg}_i\,(x^{(\alpha)}\otimes x^\mu), 
\end{equation}
where the $i$-th component energy grade $\text{Edeg}_i\, (x^{(\alpha)}\otimes x^\mu):=\text{Edeg}_i\,(x^{(\alpha)})=\lfloor\frac{\alpha_i}{\ell}\rfloor, \ {\it for \ } i\in I_0$.

In general, for any $y=\sum\limits_{\alpha}k_\alpha x^{(\alpha)}\otimes x^\mu\in \Omega_q$, we define
\begin{equation}
\text{Edeg}\,(\,y\,):=\max_\alpha\,\{\text{Edeg}\,(x^{(\alpha)}\otimes x^\mu)=\text{Edeg}\,(x^{(\alpha)})\mid\alpha\in \mathbb Z_+^m, \ k_\alpha\in \mathbb K^*\,\}.
\end{equation}
\end{defi}
\begin{lemma}
$\text{Edeg\,} \, (u .\, (x^{(\alpha)}\otimes x^\mu))\leq \text{Edeg}\,(x^{(\alpha)}\otimes x^\mu)$, for any $u\in \mathcal U_q$, $x^{(\alpha)}\otimes x^\mu \in \Omega_q^{(s)}$. In fact, $\text{Edeg}_j(u .\, (x^{(\alpha)}\otimes x^\mu))\leq \text{Edeg}_j(x^{(\alpha)}\otimes x^\mu)$, for each $j$.
\end{lemma}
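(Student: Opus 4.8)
The plan is to reduce the claim to a finite check on the generators of $\mathcal U_q$, exploiting the fact that the energy grade is defined componentwise on the divided-power factor and ignores the exterior factor $x^\mu$. Since $\mathcal U_q$ is generated by $e_i, f_i\ (i\in J)$ and $K_i^{\pm1}\ (i\in I)$, and since both $\mathrm{Edeg}$ and $\mathrm{Edeg}_j$ are defined on monomials $x^{(\alpha)}\otimes x^\mu$ and extended to sums via a maximum, it suffices to verify $\mathrm{Edeg}_j(u.(x^{(\alpha)}\otimes x^\mu)) \le \mathrm{Edeg}_j(x^{(\alpha)}\otimes x^\mu)$ for $u$ running over these generators and $x^{(\alpha)}\otimes x^\mu$ a single basis monomial; the general case then follows because $u.(x^{(\alpha)}\otimes x^\mu)$ is a $\mathbb K$-linear combination of monomials each of which (as I will check) has $j$-th component energy grade bounded by $\mathrm{Edeg}_j(x^{(\alpha)})$, and then one takes maxima over $\alpha$ for a general $y$. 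So I would first state this reduction explicitly, noting that it is enough to handle monomials and generators.

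Next I would carry out the case analysis using the explicit action formulae of Proposition 4. For $K_i^{\pm1}$ (all $i$) the action is a scalar on each monomial, so the energy grade is literally unchanged. For $e_i, f_i$ with $i\in I_1$ (the purely-odd indices, including none of $I_0$), the action only changes the exterior factor $x^\mu$ and leaves $x^{(\alpha)}$ untouched, hence $\mathrm{Edeg}_j$ is unchanged for every $j\in I_0$. The cases that actually move the divided-power exponent are $e_i, f_i$ for $i\in I_0\setminus\{m\}$ and $e_m, f_m$. For $i\in I_0\setminus\{m\}$: $e_i$ sends $\alpha \mapsto \alpha+\varepsilon_i-\varepsilon_{i+1}$ with coefficient $[\alpha_i+1]$, and $f_i$ sends $\alpha\mapsto \alpha-\varepsilon_i+\varepsilon_{i+1}$ with coefficient $[\alpha_{i+1}+1]$. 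For $i=m$: $e_m$ sends $\alpha\mapsto\alpha+\varepsilon_m$ with coefficient $\delta_{1,\mu_1}[\alpha_m+1]$, and $f_m$ sends $\alpha\mapsto\alpha-\varepsilon_m$ with coefficient $\delta_{0,\mu_1}$. In each of these, only one or two components of $\alpha$ change, and each changes by $\pm1$; the component that decreases can only decrease its floor $\lfloor\alpha_i/\ell\rfloor$, which is harmless. The only components at risk are those that \emph{increase} by $1$: namely $\alpha_i$ under $e_i$ ($i<m$), $\alpha_{i+1}$ under $f_i$ ($i<m$), and $\alpha_m$ under $e_m$.

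The heart of the argument — and the step I expect to be the main obstacle, though it is in fact short — is that whenever a component $\alpha_k$ is incremented the accompanying structure constant vanishes exactly when incrementing would raise the floor. Concretely, $\lfloor(\alpha_k+1)/\ell\rfloor > \lfloor \alpha_k/\ell\rfloor$ precisely when $\alpha_k \equiv \ell-1 \pmod\ell$, i.e. $\alpha_k+1 \equiv 0\pmod\ell$, and in that case $[\alpha_k+1] = 0$ since $\mathbf{char}(q)=\ell$. Thus the term $e_i(x^{(\alpha)}\otimes x^\mu)$ with $i<m$ is either zero (when $\alpha_i+1\equiv 0$) or has $\mathrm{Edeg}_i$ unchanged; similarly for $f_i$ with the coefficient $[\alpha_{i+1}+1]$, and for $e_m$ with the coefficient $[\alpha_m+1]$. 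Hence in every case each surviving monomial in $u.(x^{(\alpha)}\otimes x^\mu)$ satisfies $\mathrm{Edeg}_j(\cdot) \le \mathrm{Edeg}_j(x^{(\alpha)}\otimes x^\mu)$ for all $j\in I_0$, which gives the componentwise inequality; summing over $j\in I_0$ yields $\mathrm{Edeg}(u.(x^{(\alpha)}\otimes x^\mu)) \le \mathrm{Edeg}(x^{(\alpha)}\otimes x^\mu)$. Finally, for a general element $u\in\mathcal U_q$ write it as a linear combination of words in the generators and iterate; for a general $y = \sum_\alpha k_\alpha x^{(\alpha)}\otimes x^\mu$ apply the monomial estimate termwise and take the maximum over $\alpha$, using the definition of $\mathrm{Edeg}(y)$ as that maximum. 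This completes the plan.
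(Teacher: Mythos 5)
Your proposal is correct and follows essentially the same route as the paper: reduce to the generators via Proposition 4 and observe that whenever a divided-power exponent $\alpha_k$ is incremented, the accompanying coefficient $[\,\alpha_k{+}1\,]$ vanishes exactly when $\alpha_k\equiv \ell-1\pmod\ell$, i.e.\ exactly when the floor $\lfloor\alpha_k/\ell\rfloor$ would jump. The only difference is that the paper delegates the case $i\in I_0\setminus\{m\}$ to Proposition 3.3 of \cite{GH}, whereas you carry out that (identical) computation explicitly.
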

\begin{proof} It suffices to take $u=e_i$, or $f_i$, $(i\in I)$ for checking the result.

\text{\rm (i)} For $i\in I_0\setminus\{ m\}$,  the result follows directly from Proposition 3.3  \cite{GH}, since $\mathcal U_q(\mathfrak{sl}_m)\subset \mathcal U_q$.

\text{\rm (ii)} For $i=m$, if $\mu_1=0$ or $\mu_1=1$ and $\alpha_m=r^{\prime}\ell{-}1$ $(0\leq r^{\prime}\leq r)$, then $[\alpha_m+1]=0$, and by Proposition 4, $ \text{Edeg}\,(e_m.\, (x^{(\alpha)}\otimes x^\mu))=0$, the result is as required. Otherwise, $\mu_1=1$ and $\alpha_m<r^{\prime}\ell{-}1$ $(0\leq r^{\prime}\leq r)$, then $e_m.\, (x^{(\alpha)}\otimes x^\mu)=[\alpha_m+1]\,x^{(\alpha+\varepsilon_m)} \otimes x^{\mu-\varepsilon_{m+1}}$. Since $\lfloor\frac{\alpha_m+1}{\ell}\rfloor=\lfloor\frac{\alpha_m}{\ell}\rfloor$, $\text{Edeg}\,(e_m.\, (x^{(\alpha)}\otimes x^\mu))= \text{Edeg}\,(x^{(\alpha)}\otimes x^\mu)$.

Similarly, if $\alpha_m=0$, or $\mu_1=1$, then by Proposition 4, $\text{Edeg}\,(f_m.\, (x^{(\alpha)}\otimes x^\mu))=0$, the result is true. Otherwise, $0<\alpha_m\leq r\ell{-}1$ and $\mu_1=0$, then $\lfloor\frac{\alpha_m-1}{\ell}\rfloor\leq\lfloor\frac{\alpha_m}{\ell}\rfloor$, the conclusion is true.

\text{\rm (iii)} For $i \in I_1$, by definition, $\text{Edeg}$ of $x^{(\alpha)}\otimes x^\mu$ concentrates on $x^{(\alpha)}$-part, so the conclusion is clear.  
\end{proof}

\begin{lemma}
Let $x^{(\alpha)}\otimes x^\mu$, $x^{(\beta)}\otimes x^\nu\in \Omega_q^{(s)}$ with $\text{Edeg}\,(x^{(\alpha)}\otimes x^\mu)=\text{Edeg}\,(x^{(\beta)}\otimes x^\nu)$. If $\text{Edeg}_i\,(x^{(\alpha)}\otimes x^\mu)\neq \text{Edeg}_i\,(x^{(\beta)}\otimes x^\nu)$, for some $i\in I_0$, then $x^{(\alpha)}\otimes x^\mu\not\in \mathcal U_q.\, (x^{(\beta)}\otimes x^\nu), \ x^{(\beta)}\otimes x^\nu\not\in \mathcal U_q.\, (x^{(\alpha)}\otimes x^\mu)$.
\end{lemma}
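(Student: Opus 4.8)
The plan is to argue by contradiction using the monotonicity of component energy grades established in Lemma~10, combined with the fact that each generator $e_i,f_i$ changes at most one component $\alpha_i$ and does so by $\pm 1$. First I would reduce to showing: if $y\in\mathcal U_q.\,(x^{(\beta)}\otimes x^\nu)$ and $x^{(\alpha)}\otimes x^\mu$ appears in $y$ with nonzero coefficient, then $\text{Edeg}_i\,(x^{(\alpha)}\otimes x^\mu)\le\text{Edeg}_i\,(x^{(\beta)}\otimes x^\nu)$ for every $i\in I_0$. This follows by iterating Lemma~10 along any monomial word $u=u_k\cdots u_1$ in the generators, since each $u_j$ satisfies $\text{Edeg}_i\,(u_j.\,z)\le\text{Edeg}_i\,(z)$ componentwise, and then extending additively over linear combinations (the max in Definition~8 behaves well here because applying any $u$ can only decrease each $\text{Edeg}_i$).

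Next I would combine the total-degree constraint with the componentwise bound. Suppose for contradiction that $x^{(\alpha)}\otimes x^\mu\in\mathcal U_q.\,(x^{(\beta)}\otimes x^\nu)$. By the reduction above, $\text{Edeg}_i\,(x^{(\alpha)})\le\text{Edeg}_i\,(x^{(\beta)})$ for all $i\in I_0$. But by hypothesis $\sum_i\text{Edeg}_i\,(x^{(\alpha)})=\text{Edeg}\,(x^{(\alpha)}\otimes x^\mu)=\text{Edeg}\,(x^{(\beta)}\otimes x^\nu)=\sum_i\text{Edeg}_i\,(x^{(\beta)})$, so equality must hold in every component, i.e. $\text{Edeg}_i\,(x^{(\alpha)})=\text{Edeg}_i\,(x^{(\beta)})$ for all $i$. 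This contradicts the assumption that $\text{Edeg}_i\,(x^{(\alpha)}\otimes x^\mu)\ne\text{Edeg}_i\,(x^{(\beta)}\otimes x^\nu)$ for some $i$. The symmetric statement, with the roles of $(\alpha,\mu)$ and $(\beta,\nu)$ interchanged, follows by the identical argument since the hypotheses are symmetric.

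One subtlety I would be careful about is that a general element of $\mathcal U_q.\,(x^{(\beta)}\otimes x^\nu)$ is a linear combination of images $u.\,(x^{(\beta)}\otimes x^\nu)$, so I must make sure the componentwise bound survives taking linear combinations: if $x^{(\alpha)}\otimes x^\mu$ appears in $\sum_t c_t\,u_t.\,(x^{(\beta)}\otimes x^\nu)$, then it appears in at least one $u_t.\,(x^{(\beta)}\otimes x^\nu)$ with nonzero coefficient (monomials do not cancel for free in a single such image since the $x^{(\alpha)}$ form a basis, though cancellation across different $t$ could in principle occur — but then $x^{(\alpha)}\otimes x^\mu$ must still survive in the sum, forcing it to appear uncancelled in some term, or appear in several terms all of which obey the bound). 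Either way the componentwise inequality $\text{Edeg}_i\,(x^{(\alpha)})\le\text{Edeg}_i\,(x^{(\beta)})$ holds. The main obstacle is thus purely bookkeeping: making the passage from single generators to arbitrary $u\in\mathcal U_q$ and then to linear combinations fully rigorous, using that $\mathcal U_q$ is spanned by monomials in $e_i,f_i,K_j^{\pm1}$ and that the $K_j^{\pm1}$ act diagonally (hence preserve every $\text{Edeg}_i$ exactly). Once that is in place, the degree-versus-components pigeonhole finishes the proof immediately.
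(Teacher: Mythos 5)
Your argument is correct and is essentially the paper's own proof: assume membership, write the element as a sum of images $u_k.\,(x^{(\beta)}\otimes x^\nu)$, locate a term in which the monomial $x^{(\alpha)}\otimes x^\mu$ survives, invoke the componentwise monotonicity $\text{Edeg}_j(u.\,z)\le\text{Edeg}_j(z)$ of the preceding lemma, and then use the equality of total energy grades to force equality in every component, contradicting the hypothesis. The cancellation bookkeeping you flag is handled (tersely) the same way in the paper, so there is nothing genuinely different here.
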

\begin{proof}
If $x^{(\alpha)}\otimes x^\mu\in \mathcal U_q.\, (x^{(\beta)}\otimes x^\nu)\setminus\mathbb K.\,(x^{(\beta)}\otimes x^\nu)$, then $\exists$ $u_k\in \mathcal U_q$, such that
$x^{(\alpha)}\otimes x^\mu=\sum u_k.\,(x^{(\beta)}\otimes x^\nu)$. By definition and Lemma 9, 
$\exists$ $u_{k_0}\in \mathcal U_q$, such that  
\begin{gather*}
\text{Edeg}\,(x^{(\alpha)}\otimes x^\mu)=\text{Edeg}\,\bigl(u_{k_0}.\,(x^{(\beta)}\otimes x^\nu)\bigr)\le \text{Edeg}\,(x^{(\beta)}\otimes x^\nu),\\  
\text{Edeg}_j\,(x^{(\alpha)}\otimes x^\mu)=\text{Edeg}_j\,(u_{k_0}.\,(x^{(\beta)}\otimes x^\nu))\le \text{Edeg}_j\,(x^{(\beta)}\otimes x^\nu), \quad \text{\it for all }\  j. 
\end{gather*}
So it follows from the first assumption that $\text{Edeg}_j\,(x^{(\alpha)}\otimes x^\mu)=\text{Edeg}_j\,(x^{(\beta)}\otimes x^\nu)$ for all $j$. This contradicts the second assumption.
\end{proof}

For $x^{(\alpha)}\otimes x^\mu\in \Omega_q^{(s)}$, put $m_j=\text{Edeg}_j\,(x^{(\alpha)})$,  $|\,\mu\,|=u$, then $s=|\,\alpha\,|+|\,\mu\,|=\sum_im_i\ell+r+u$, where $r=|\,\alpha\,|-\sum_im_i\ell=(i_\alpha-1)(\ell-1)+r_{i_\alpha}$, for some $1\le i_\alpha\le m$, $0\le r_{i_{\alpha}}\le \ell-1$. Denote
\begin{gather}
\gamma_\alpha=(m_1\ell+(\ell-1),\cdots,m_{i_\alpha-1}\ell+(\ell-1),m_{i_\alpha}\ell+r_{i_{\alpha}},m_{i_\alpha+1}\ell,\cdots,m_m\ell), \\
\mu'=(0,\cdots,0,\underset{u}{\underbrace{1,\cdots,1}}),
\end{gather} 
with $|\,\mu'\,|=u=|\,\mu\,|$. Thus we get
\begin{gather}
x^{(\gamma_\alpha)}\otimes x^{\mu’}\in \Omega_q^{(s)}, \\
\text{Edeg}_j\,(x^{(\alpha)}\otimes x^\mu)=\text{Edeg}_j\,(x^{(\gamma_\alpha)}\otimes x^{\mu'}), \quad \textit{\it for\  each\ }\  j\le m. 
\end{gather}
By Proposition 3.5 \cite{GH}, $\exists$ $z_\alpha\in \mathcal U_q(\mathfrak{sl}_m)\subset\mathcal U_q$, such that $z_\alpha.\,x^{(\alpha)}=x^{(\gamma_\alpha)}$. Note that $\Lambda_{q^{-1}}^{(u)}(n)$ is a simple $\mathcal U_{q^{-1}}(\mathfrak{sl}_n)$-module, so $\exists \ z_\mu\in \mathcal U_{q^{-1}}(\mathfrak{sl}_n)\subset\mathcal U_q$, such that $z_\mu.\,x^{\mu}=x^{\mu'}$. So there exists $z_{\alpha,\mu}=c\,z_\mu z_\alpha\in\mathcal U_q$ $(c\in\mathbb K^*)$, such that $z_{\alpha,\mu}.\,(x^{(\alpha)}\otimes x^{\mu})=x^{(\gamma_\alpha)}\otimes x^{\mu'}$. Set $v=|\,\nu\,|=|\,\nu'\,|$.

\begin{lemma} Let $x^{(\alpha)}\otimes x^\mu$, $x^{(\beta)}\otimes x^\nu\in \Omega_q^{(s)}$, such that $\text{Edeg}_i\,(x^{(\alpha)}\otimes x^\mu)=\text{Edeg}_i\,(x^{(\beta)}\otimes x^\nu)$ for each $i\in I_0$.  Then 

$(\text{\rm i})$ $u-v=(i_\beta-i_\alpha)(\ell-1)+r_{i_\beta}-r_{i_\alpha}$. 

$(\text{\rm ii})$ There exist $z,\ z'\in \mathcal U_q$ such that $z.\, (x^{(\alpha)}\otimes x^\mu)=x^{(\beta)}\otimes x^\nu, \ z'.\,(x^{(\beta)}\otimes x^\nu)=x^{(\alpha)}\otimes x^\mu$. In other words, $\mathcal U_q.\, (x^{(\alpha)}\otimes x^\mu)=\mathcal U_q.\, (x^{(\beta)}\otimes x^\nu)$.
\end{lemma}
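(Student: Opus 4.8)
The plan is to reduce everything to the normalized representatives $x^{(\gamma_\alpha)}\otimes x^{\mu'}$ and $x^{(\gamma_\beta)}\otimes x^{\nu'}$ constructed just before the statement, using the elements $z_{\alpha,\mu}$ and $z_{\beta,\nu}$ of $\mathcal U_q$ that carry $x^{(\alpha)}\otimes x^\mu$ and $x^{(\beta)}\otimes x^\nu$ onto those representatives (and which are invertible on the relevant weight spaces up to scalars, so the reverse maps also exist inside $\mathcal U_q$). Thus it is enough to prove both claims for the pair $x^{(\gamma_\alpha)}\otimes x^{\mu'}$ and $x^{(\gamma_\beta)}\otimes x^{\nu'}$, which have the concrete shape displayed in equations for $\gamma_\alpha$ and $\mu'$: the exponent vector is completely determined by the component energy grades $m_1,\dots,m_m$, the index $i_\alpha$ (resp. $i_\beta$) and the residues $r_{i_\alpha}$ (resp. $r_{i_\beta}$), together with the single nonnegative integer $u=|\mu|$ (resp. $v=|\nu|$).

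For part (i), I would simply compute degrees. Since $x^{(\alpha)}\otimes x^\mu$ and $x^{(\beta)}\otimes x^\nu$ both lie in $\Omega_q^{(s)}$, we have $|\alpha|+u = s = |\beta|+v$. Writing $|\alpha|=\sum_i m_i\ell + (i_\alpha-1)(\ell-1)+r_{i_\alpha}$ and, because the component energy grades agree, $|\beta|=\sum_i m_i\ell + (i_\beta-1)(\ell-1)+r_{i_\beta}$ with the same $\sum_i m_i\ell$, subtracting gives $u-v = |\beta|-|\alpha| = (i_\beta-i_\alpha)(\ell-1) + r_{i_\beta}-r_{i_\alpha}$, which is exactly (i). This is routine bookkeeping once the decomposition $s=\sum_i m_i\ell + r + u$ introduced before the lemma is in hand.

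For part (ii), after the reduction it suffices to move $x^{(\gamma_\alpha)}\otimes x^{\mu'}$ to $x^{(\gamma_\beta)}\otimes x^{\nu'}$ and back. Both vectors have all their exterior variables ``packed to the right'', i.e. $x^{\mu'} = x_{m+1}\cdots x_{m+u}$ and similarly for $\nu'$, and both have the divided-power part with a fixed block of maximal slots $m_i\ell+(\ell-1)$ for $i<i_\alpha$, a partially filled slot $m_{i_\alpha}\ell + r_{i_\alpha}$, and then the remaining slots exactly $m_j\ell$. The only discrepancy between $\gamma_\alpha$ and $\gamma_\beta$ is where the ``break'' between full and $m_j\ell$-slots occurs, and correspondingly $u\ne v$; by (i) the total degree is preserved. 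I would realize the transfer as a composition of root-vector moves: use $f_m$ (which, by Proposition 4(ii), converts a unit of $x^{(\alpha)}$-degree in the $m$-th slot into one more exterior generator, i.e. decreases $\alpha_m$ by one and increases $|\mu|$ by one whenever $\mu_1=0$ and $\alpha_m>0$) together with the $\mathcal U_q(\mathfrak{sl}_m)$-action (Proposition 3.5 of \cite{GH}) to shuffle divided-power weight between slots, and the $\mathcal U_{q^{-1}}(\mathfrak{sl}_n)$-action (simplicity of $\Lambda_{q^{-1}}^{(w)}(n)$) to repack the exterior part; and use $e_m$ for the reverse. Concretely: first apply $\mathcal U_q(\mathfrak{sl}_m)$ to funnel exactly $|i_\beta-i_\alpha|(\ell-1)+|r_{i_\beta}-r_{i_\alpha}|$ units of divided-power weight into (or out of) the $m$-th slot in a way that keeps the $m$-th-slot occupation admissible for applying $f_m$ (resp.\ $e_m$) repeatedly, i.e.\ avoiding the vanishing cases $[\alpha_m+1]=0$; then apply $f_m$ (resp.\ $e_m$) that many times to exchange divided-power degree for exterior degree, always keeping $\mu_1$ in the state required by Proposition 4(ii); then repack both the divided-power part (via $\mathcal U_q(\mathfrak{sl}_m)$, Proposition 3.5 of \cite{GH}) and the exterior part (via simplicity of $\Lambda_{q^{-1}}^{(v)}(n)$ under $\mathcal U_{q^{-1}}(\mathfrak{sl}_n)$) into the normalized forms; finally undo the reduction by applying the (scalar multiples of) inverses of $z_{\alpha,\mu}$ and $z_{\beta,\nu}$. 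Running the whole chain in reverse, with $e_m$ in place of $f_m$, gives $z'$. Since each step lands in $\mathcal U_q$ and all intermediate scalars are nonzero (the $[\,\cdot\,]$-factors that appear are invertible precisely because we stay away from the $r'\ell-1$ breakpoints, which is exactly where the constraint on the $r_{i_\alpha}$'s is used), the composite elements $z,z'\in\mathcal U_q$ do the job, and in particular $\mathcal U_q.(x^{(\alpha)}\otimes x^\mu)=\mathcal U_q.(x^{(\beta)}\otimes x^\nu)$.

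The main obstacle I anticipate is the careful sequencing of the $e_m/f_m$ applications so that one never hits a vanishing $q$-integer $[\alpha_m+1]$ or a wrong parity state $\mu_1\ne 0$ (resp.\ $\ne 1$) for the formula in Proposition 4(ii): one must interleave the $\mathfrak{sl}_m$-moves that adjust $\alpha_m$ and the $\mathfrak{sl}_n$-moves that toggle which exterior variable sits in position $m+1$ so that each $f_m$ (or $e_m$) is applied in its ``non-degenerate'' regime, and bookkeeping that the resulting net scalar is a unit. Everything else — the degree count in (i), the reduction via $z_{\alpha,\mu}$, the final symmetric statement about orbits — is formal once this combinatorial routing is set up, and the hypothesis $\text{Edeg}_i$ agreeing for all $i\in I_0$ is precisely what guarantees there is enough ``room'' in each slot to carry out the routing without leaving $\Omega_q^{(s)}$.
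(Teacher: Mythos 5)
Your proposal is correct and follows essentially the same route as the paper: reduce to the normalized representatives $x^{(\gamma_\alpha)}\otimes x^{\mu'}$, $x^{(\gamma_\beta)}\otimes x^{\nu'}$ via $z_{\alpha,\mu}$, obtain (i) from $|\alpha|+u=s=|\beta|+v$ together with the agreement of component energy grades, and realize (ii) by successive $f_i$-moves (and $e_i$-moves for the reverse) that shift the surplus degree between slots while checking all intermediate $q$-integer coefficients are nonzero. Your extra attention to sequencing the $e_m/f_m$ steps so as to avoid the degenerate cases of Proposition 4(ii) only elaborates on what the paper leaves implicit.
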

\begin{proof}
According to the discussion before this Lemma, it suffices to prove that the result holds for
$x^{(\gamma_\alpha)}\otimes x^{\mu'}$, $x^{(\gamma_\beta)}\otimes x^{\nu'}\in\Omega^{(s)}$. (i) follows from $s=|\,\alpha\,|+u=|\,\beta\,|+v$ and the assumption.

(ii) If $u=v$, the result is clear since $\mu'=\nu'$ and $x^{(\gamma_\alpha)}=x^{(\gamma_\beta)}$, by definition.  

Suppose $u<v$ without loss of generality. Then we have $\gamma_\alpha>\gamma_\beta$. By the $f_i$-action formulas in Proposition 4, we can act successively some operators $f_i$'s on $x^{(\gamma_\alpha)}\otimes x^{\mu'}$ to move the superfluous degree components up to $\gamma_\alpha-\gamma_\beta$ from the left to the right, finally we can get nonzero times $x^{(\gamma_\beta)}\otimes x^{\nu'}$. Namely,
$\exists$ $z\in\mathcal U_q$, such that $z.\,(x^{(\gamma_\alpha)}\otimes x^{\mu'})=x^{(\gamma_\beta)}\otimes x^{\nu'}$. 

Similarly, we can use some operators $e_i$'s to act successively on $x^{(\gamma_\beta)}\otimes x^{\nu'}$ in order to move the superfluous degree components up to $v-u$ from the right to the left, then to get $x^{(\gamma_\alpha)}\otimes x^{\mu'}$ (noticing that during the process every $e_i$-acting coefficient in each step is nonzero).
\end{proof}

\begin{lemma}
Let $x^{(\alpha)}\otimes x^\mu, x^{(\beta)}\otimes x^\nu\in\Omega_q^{(s)}$, such that $\text{Edeg}\,(x^{(\alpha)}\otimes x^\mu) > \text{Edeg}\,(x^{(\beta)}\otimes x^\nu)$. If $\text{Edeg}_i\,(x^{(\alpha)}\otimes x^\mu)\geq\text{Edeg}_i\,(x^{(\beta)}\otimes x^\nu)$, for each $1\leq i\leq m$, then $\exists$ $z\in \mathcal U_q$, such that $z.\,(x^{(\alpha)}\otimes x^\mu)=x^{(\beta)}\otimes x^\nu$. That is, $\mathcal U_q.\, (x^{(\beta)}\otimes x^\nu)\subsetneqq \mathcal U_q.\,  (x^{(\alpha)}\otimes x^\mu)$.
\end{lemma}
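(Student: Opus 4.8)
The plan is to reduce, exactly as in the proof of Lemma~11, to the ``normal form'' representatives $x^{(\gamma_\alpha)}\otimes x^{\mu'}$ and $x^{(\gamma_\beta)}\otimes x^{\nu'}$, using the operators $z_{\alpha,\mu},\,z_{\beta,\nu}\in\mathcal U_q$ constructed in the paragraph preceding Lemma~11 (these send each monomial to its normal form while preserving every component energy grade $\text{Edeg}_i$). Since $z_{\beta,\nu}$ acts on $x^{(\beta)}\otimes x^\nu$ to give a nonzero multiple of $x^{(\gamma_\beta)}\otimes x^{\nu'}$ and, conversely, one can climb back from the normal form by the $e_i$/$f_i$-ladder arguments already used, it suffices to produce $z\in\mathcal U_q$ with $z.\,(x^{(\gamma_\alpha)}\otimes x^{\mu'})=x^{(\gamma_\beta)}\otimes x^{\nu'}$, and the strict inclusion will follow once we also check that the reverse containment fails.

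The heart of the matter is the construction of $z$. Write $m_i=\text{Edeg}_i(x^{(\alpha)})$ and $m_i'=\text{Edeg}_i(x^{(\beta)})$, so by hypothesis $m_i\ge m_i'$ for all $i$ and $\sum m_i>\sum m_i'$. First I would handle the energy drop: using the $\mathcal U_q(\mathfrak{sl}_m)$-action (Proposition~4(i)) together with the root-of-unity $q$-binomial identities of Lemma~5 — in particular ${\,s\,\brack\,\ell\,}=s_1$ — one can, component by component from left to right, apply suitable monomials in the $f_i$'s (resp. $e_i$'s) whose coefficients are of the shape $[\alpha_i+1]$ with $\alpha_i+1$ divisible by $\ell$ and hence, crucially, with the relevant binomial coefficient \emph{nonzero} (a plain integer binomial, not killed by $[\ell]=0$), lowering the energy profile from $(m_1,\dots,m_m)$ down to $(m_1',\dots,m_m')$ while arriving at the normal form $x^{(\gamma')}\otimes x^{\mu'}$ with $|\gamma'|=|\gamma_\beta|$ in the $\alpha$-part but possibly the wrong split between the $x^{(\cdot)}$-degree and the $x^{(\cdot)}$-length. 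Then, exactly as in step (ii) of the proof of Lemma~11, since the two remaining monomials now have equal component energy grades, I would move the superfluous degree across using further $f_i$'s (or $e_i$'s) on $\mathfrak{gl}(m|n)$ — passing through the node $i=m$ via the formulas in Proposition~4(ii) — all coefficients along the way being nonzero, until landing on $x^{(\gamma_\beta)}\otimes x^{\nu'}$. Composing all these operators gives the desired $z\in\mathcal U_q$.

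For the properness of the inclusion, I would invoke Lemma~9: every $u\in\mathcal U_q$ satisfies $\text{Edeg}(u.\,(x^{(\beta)}\otimes x^\nu))\le\text{Edeg}(x^{(\beta)}\otimes x^\nu)<\text{Edeg}(x^{(\alpha)}\otimes x^\mu)$, so $x^{(\alpha)}\otimes x^\mu\notin\mathcal U_q.\,(x^{(\beta)}\otimes x^\nu)$, whence $\mathcal U_q.\,(x^{(\beta)}\otimes x^\nu)\subsetneq\mathcal U_q.\,(x^{(\alpha)}\otimes x^\mu)$.

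The main obstacle I anticipate is bookkeeping the energy-lowering step carefully: one must choose the sequence of $f_i$'s (and when crossing $i=m$, track the parity and the $\mu$-length changes) so that at \emph{every} intermediate step the $q$-binomial coefficient picked up is a nonzero integer rather than one annihilated by $[\ell]=0$. This is precisely where the $\ell$-adic decomposition of Lemma~5, and the companion statement Proposition~3.5 of \cite{GH} already used before Lemma~11, must be applied one $\mathfrak{sl}_2$-string at a time — moving a full block of $\ell$ units of degree at once so that the governing coefficient is ${\,m_i\ell+\ell-1+\ell\,\brack\,\ell\,}$-type and hence an ordinary integer. Once that combinatorial control is in place, the rest is the same ladder manipulation as in Lemma~11.
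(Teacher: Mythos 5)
Your overall route matches the paper's: reduce to the normal forms $x^{(\gamma_\alpha)}\otimes x^{\mu'}$, $x^{(\gamma_\beta)}\otimes x^{\nu'}$, lower the component energy grades one at a time by $f$-monomials with nonzero coefficients, finish with Lemma~11, and get strictness from Lemma~9. The reduction and the strictness step are fine. The gap is in the one step that carries all the content: the mechanism you propose for lowering a single $\text{Edeg}_j$ by one does not work. By Proposition~4(i) the coefficient picked up by $f_i$ is the \emph{quantum integer} $[\,\alpha_{i+1}{+}1\,]$, not a $q$-binomial; if $\ell\mid \alpha_{i+1}{+}1$ this coefficient is $0$, the opposite of what you assert. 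Likewise, ``moving a full block of $\ell$ units at once'' so as to pick up an integer ${\,\cdot\,\brack\,\ell\,}$ would require the divided power $f_i^{(\ell)}$, which is not available here: by the paper's Convention $\mathcal U_q=u_q(\mathfrak{gl}(m|n))$, where $f_i^{\ell}=0$, and indeed $f_i^{\ell}$ annihilates every monomial because the product $[\alpha_{i+1}{+}1]\cdots[\alpha_{i+1}{+}\ell]$ always contains a factor $[\,k\ell\,]=0$. So the operator you describe is identically zero.

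The paper circumvents this with a two-stage cascade: first $f_j^{\ell-1}$ moves $\ell{-}1$ units from slot $j$ to slot $j{+}1$ (all coefficients of the form $[\,m_{j+1}\ell+k\,]$ with $1\le k\le\ell{-}1$, hence nonzero because the receiving slot of the normal form starts at a multiple of $\ell$), and then $f_jf_{j+1}$ shifts the last unit onward so that slot $j$ drops from $m_j\ell$ to $(m_j{-}1)\ell+(\ell{-}1)$ while slot $j{+}1$ is restored — only then does $\text{Edeg}_j$ decrease by $1$ with every coefficient nonzero. The entire case analysis of the paper's proof (cases (I)/(II), the positions of $j$ relative to $i_\alpha$, the boundary $r_{i_\alpha}=\ell{-}1$, and the crossing of the odd node $m$ via $f_m, f_{m+1},\dots$ where the coefficients become Kronecker deltas rather than quantum integers) exists precisely to make this cascade land on an element with the same component energy grades as $\gamma_\beta$. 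Deferring this as ``bookkeeping'' while proposing a vanishing operator in its place leaves the essential step of the lemma unproved.
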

\begin{proof} 
According to the discussion before Lemma 11, we will prove that the result holds for
$x^{(\gamma_\alpha)}\otimes x^{\mu'}$, $x^{(\gamma_\beta)}\otimes x^{\nu'}\in\Omega^{(s)}$. 

It suffices to prove the result holds under the assumption 
that $\text{Edeg}\,(x^{(\gamma_\alpha)})=\text{Edeg}\,(x^{(\gamma_\beta)})+1$, for an induction on $\text{Edeg}\,(x^{(\gamma_\alpha)})-\text{Edeg}\,(x^{(\gamma_\beta)})$ will give rise to the conclusion as required. 

Clearly, our assumption implies that $\exists$ 
$j\le m$, such that $\text{Edeg}_i\,(x^{(\gamma_\alpha)})=\text{Edeg}_i\,(x^{(\gamma_\beta)})$, for all $i\ne j$, and $\text{Edeg}_j\,(x^{(\gamma_\alpha)})=\text{Edeg}_j\,(x^{(\gamma_\beta)})+1$. 
From $s=|\,\gamma_\alpha\,|+u=|\,\gamma_\beta\,|+v$, namely, 
$$
\ell\cdot \text{Edeg}\,(x^{(\gamma_\alpha)})+(i_\alpha-1)(\ell-1)+r_{i_\alpha}+u=\ell\cdot \text{Edeg}\,(x^{(\gamma_\beta)})+(i_\beta-1)(\ell-1)+r_{i_\beta}+v,
$$ 
we deduce that  $(\ell+u)-v=(i_\beta-i_\alpha)(\ell-1)+r_{i_\beta}-r_{i_\alpha}\le (i_\beta-i_\alpha+1)(\ell-1)-r_{i_\alpha}$, namely,
\begin{equation}
u+i_\alpha(\ell-1)+(r_{i_\alpha}{+}1)=s= v+(i_\beta-1)(\ell-1)+r_{i_\beta}.
\end{equation}

(I) If $i_\beta>i_\alpha$, then (3.7) indicates
\begin{equation}
    u+(r_{i_\alpha}+1)\le v+(i_\beta-i_\alpha)(\ell-1).
\end{equation}

(1) If $i_\alpha<j<m$,  by Proposition 4 (2.3) and the definition of $\gamma_\alpha$, we see that the $f_j$-action on $x^{(\gamma_\alpha)}\otimes x^{\mu'}$ will lead to merely lower down the $i$-th component energy grade by $1$ but keep the others invariant. That is,
$\text{Edeg}_i\,\bigl(f_j.\,(x^{(\gamma_\alpha)}\otimes x^{\mu'})\bigr)=\text{Edeg}_i\,\bigl(x^{(\gamma_\alpha-\varepsilon_j+\varepsilon_{j+1})}\otimes x^{\mu'}\bigr)=\text{Edeg}_i\,\bigl(x^{(\gamma_\beta)}\otimes x^{\nu'}\bigr)$, for any $i$.
So, Lemma 11 gives the result.

If $j=m$ and $i_\alpha=m{-}1$ (so $i_\beta=m$) and $r_{m-1}<\ell{-}1$, then by Proposition 4 (2.3) and the definition of $\gamma_\alpha$, $e_{m-1}$ acts on $x^{(\gamma_\alpha)}\otimes x^{\mu'}$ to get a nonzero element since $[\,r_{i_\alpha}{+}1\,]\ne 0$, which is of a lower energy grade by $1$ only on the $m$-th component. So, the result is true, by Lemma 11.

If $j=m$ and $i_\alpha=m{-}1$ (so $i_\beta=m$) and $r_{m-1}=\ell{-}1$, then (3.8) implies
$u+1\le v$, so $f_m.\,(x^{(\gamma_\alpha)}\otimes x^{\mu'})=x^{(\gamma_\alpha-\varepsilon_m)}\otimes x^{\epsilon_{m+1}+\mu'}$ has the same component energy grades as $x^{(\gamma_\beta)}\otimes x^{\nu'}$, so Lemma 11 implies the result.

(2) If $j=i_\alpha\, (\le m-1)$ and $r_{i_\alpha}<\ell{-}1$, then by Proposition 4 (2.3) and the definition of $\gamma_\alpha$, we have 
\[f_j^{r_{i_\alpha}+1}.\,\bigl(x^{(\gamma_\alpha)}\otimes x^{\mu'}\bigr)=[\,r_{i_\alpha}{+}1\,]\,!\,
x^{(\gamma_\alpha-(r_{i_\alpha}+1)\varepsilon_{i_\alpha}+(r_{i_\alpha}+1)\varepsilon_{i_\alpha+1})}\ne0,
\]
which has the same component energy grades as $x^{(\gamma_\beta)}\otimes x^{\nu'}$. So Lemma 11 gives the result.

 If $j=i_\alpha< m{-}1$ and $r_{i_\alpha}=\ell{-}1$, then
 $$
 \bigl(f_jf_{j+1}\bigr)f_j^{\ell-1}.\,\bigl(x^{(\gamma_\alpha)}\otimes x^{\mu'}\bigr)=
 [\,\ell{-}1\,][\,\ell{-}1\,]\,!\,x^{(\gamma_\alpha-\ell\varepsilon_j+(\ell{-}1)\varepsilon_{j+1}+\varepsilon_{j+2})}\otimes x^{\mu'}, 
 $$
has the same component energy grades as $x^{(\gamma_\beta)}\otimes x^{\nu'}$, so the result is true, by Lemma 11. 

 If $j=i_\alpha= m{-}1$ (so $i_\beta=m$, in this case $u+1\le v$ still holds) and $r_{i_\alpha}=\ell{-}1$, then by Proposition 4 (2.4), $f_{m-1}f_mf_{m-1}^{\ell-1}$ acting on
 $x^{(\gamma_\alpha)}\otimes x^{\mu'}$ will lower down only $(m{-}1)$-st component energy grade by $1$, which implies the result as expected. 

(3) If $j<i_\alpha\,(<i_\beta)$ and $r_\alpha<\ell{-}1$, then by Proposition 4 (2.3) and the definition of $\gamma_\alpha$, we have 
\begin{equation}
f_j^{\ell-1}\,f_{j+1}^{\ell-1}\cdots f_{i_\alpha-1}^{\ell-1}f_{i_\alpha}^{r_{i_\alpha}}.\,
\bigl(x^{(\gamma_\alpha)}\otimes x^{\mu'}\bigr)=([\,\ell{-}1\,]\,!)^{i_\alpha-j}[\,r_{i_\alpha}\,]\,!\,
x^{(\gamma_\alpha')}\otimes x^{\mu'}\ne0,
\end{equation}
where $\gamma_\alpha'=\bigl(\cdots,m_j\ell, m_{j+1}\ell+(\ell{-}1),\cdots,m_{i_\alpha}\ell+(\ell{-}1),
m_{i_\alpha+1}\ell+r_{i_\alpha},\cdots\bigr)$. By the same reason, we have
\[
f_j\,f_{j+1}\cdots f_{i_\alpha-1}f_{i_\alpha}.\,
\bigl(x^{(\gamma_\alpha')}\otimes x^{\mu'}\bigr)=([\,\ell{-}1\,])^{i_\alpha-j}[\,r_{i_\alpha}{+}1\,]\,
x^{(\gamma_\alpha'')}\otimes x^{\mu'}\ne0,
\]
where $\gamma_\alpha''=\bigl(\cdots,(m_j{-}1)\ell+(\ell{-}1), m_{j+1}\ell+(\ell{-}1),\cdots,m_{i_\alpha}\ell+(\ell{-}1),
m_{i_\alpha+1}\ell+(r_{i_\alpha}{+}1),\cdots\bigr)$.  Note that 
$$
\text{Edeg}_i\,\bigl(x^{(\gamma_\alpha'')}\otimes x^{\mu'}\bigr)=
\text{Edeg}_i\,\bigl(x^{(\gamma_\beta)}\otimes x^{\nu'}\bigr), \quad \text{\it for \ each }\ i.
$$
So, Lemmas 11 gives us the result as required.

 If $j<i_\alpha\,(<i_\beta)$ and $r_\alpha=\ell{-}1$, then after using the operator in (3.9), we continue to act product operator $f_j\cdots f_{i_\alpha}f_{i_\alpha+1}$ on $x^{(\gamma_\alpha')}\otimes x^{\mu'}$. This will yield the required result.

(II) If $1\le i_\beta\le i_\alpha$, then (3.7) and $r_{i_\beta}\le \ell-1$ yields 
\begin{equation}
u+(r_{i_\alpha}{+}1)+(i_\alpha-i_\beta)(\ell-1)\le v.
\end{equation}

(1) If $j>i_\alpha$, then use the same $f_j$ or $e_{m-1}$ as (I) (1) to act on $x^{(\gamma_\alpha)}\otimes x^{\mu'}$, to get the result.

(2) If $j=i_\alpha<m$ and $r_j<\ell-1$, then use the same $f_j^{r_j+1}$ as (I) (2) to act on $x^{(\gamma_\alpha)}\otimes x^{\mu'}$, to arrive at the result.

If $j=i_\alpha<m$ and $r_j=\ell-1$, then by Proposition 4 (2.3), we obtain the following nonzero element
$$
f_j^{\ell-1}.\,\bigl(x^{(\gamma_\alpha)}\otimes x^{\mu'}\bigr)=[\,\ell{-}1\,]\,!\,x^{(\gamma_\alpha''')}\otimes x^{\mu'},
$$
where $\gamma_\alpha'''=(\cdots, m_j\ell, m_{j+1}\ell+(\ell{-}1),\cdots)$, which has the same $i$-th component energy grade for any $i$ as that of $x^{(\gamma_\alpha)}\otimes x^{\mu'}$. Furthermore, by Proposition 4 (2.3) or (2.4), we have
\[
f_jf_{j+1}.\,\bigl(x^{(\gamma_\alpha''')}\otimes x^{\mu'}\bigr)=x^{(\gamma_\alpha'''')}\otimes x^{\delta_{j+1,m}\epsilon_{m+1}+\mu'},
\]
where $\gamma_\alpha''''=\bigl(\cdots, (m_j{-}1)\ell+(\ell{-}1), m_{j+1}\ell+(\ell{-}1), (1-\delta_{j+1,m})(m_{j+2}\ell+1),\cdots\bigr)$. 

Note that both $x^{(\gamma_\alpha'''')}\otimes x^{\delta_{j+1,m}\epsilon_{m+1}+\mu'}$ and $x^{(\gamma_\beta)}\otimes x^{\nu'}$ have the same component energy grades. So, the result holds, according to Lemma 11.

If $j=i_\alpha=m$, then by Proposition 4 (2.4) \& (2.5), and (3.10), we get the following element
\begin{equation}
f_m\bigl(f_{m+1}f_m\bigr)\cdots\bigl(f_{m+r_m-1}\cdots f_{m+1}f_m\bigr)\bigl(f_{m+r_m}\cdots f_{m+1}f_m\bigr).\,\bigl(x^{(\gamma_\alpha)}\otimes x^{\mu'}\bigr)=x^{(\gamma_\alpha^*)}\otimes
x^{\omega+\mu'},
\end{equation}
which has the same $i$-th component energy grade as that of $x^{(\gamma_\beta)}\otimes x^{\nu'}$ for all $i$, where $\omega=(\underset{r_m+1}{\underbrace{1,\cdots,1}},0,\cdots,0)$, and 
$\gamma_\alpha^*=\bigl(\,m_1\ell+(\ell{-}1),\cdots, m_{m-1}\ell+(\ell{-}1), (m_m{-}1)\ell+(\ell{-}1)\,\bigr)$. Thus, Lemma 11 yields the desired result. 

(3) If $j<i_\alpha$, then we consider the following cases.

If $i_\alpha<m$ and $r_{i_\alpha}<\ell{-}1$, then by Proposition 4 (2.3), \& (2.4), we have
\begin{equation}
f_j^{\ell-1}\cdots f_{i_\alpha-1}^{\ell-1}f_{i_\alpha}^{r_{i_\alpha}}.\,\bigl(x^{(\gamma_\alpha)}\otimes x^{\mu'}\bigr)=
\bigl([\,\ell{-}1\,]\,!\,\bigr)^{i_\alpha-j}[\,r_{i_\alpha}\,]\,!\,x^{(\gamma_\alpha^{*'})}\otimes x^{\mu'},
\end{equation}
where $\gamma_\alpha^{*'}=\bigl(\cdots, m_j\ell, m_{j+1}\ell+(\ell{-}1),\cdots, m_{i_\alpha}\ell+(\ell{-}1), m_{i_\alpha+1}\ell+r_{i_\alpha},\cdots \bigr)$. Furthermore, we have
\[
f_j\cdots f_{i_\alpha-1} f_{i_\alpha}.\,\bigl(x^{(\gamma_\alpha^{*'})}\otimes x^{\mu'}\bigr)=\bigl([\,\ell{-}1\,]\bigr)^{i_\alpha-j}
[\,r_{i_\alpha}{+}1\,]\,x^{(\gamma_\alpha^{*''})}\otimes x^{\mu'}\ne0,
\]
where $\gamma_\alpha^{*''}=\bigl( \cdots, (m_j{-}1)\ell+(\ell{-}1), \cdots, m_{i_\alpha}\ell+(\ell{-}1), m_{i_\alpha+1}\ell+(r_{i_\alpha}{+}1),\cdots\bigr)$. Both
$x^{(\gamma_\alpha^{*''})}\otimes x^{\mu'}$ and $x^{(\gamma_\beta)}\otimes x^{\nu'}$ have the same component energy grades, then we get the result.

If $i_\alpha<m$ and $r_{i_\alpha}=\ell{-}1$, then based on (3.12) and by Proposition 4 (2.3), \& (2.4), we have
\[
f_j\cdots f_{i_\alpha}f_{i_\alpha+1}.\,\bigl(x^{(\gamma_\alpha^{*'})}\otimes x^{\mu'}\bigr)=\bigl([\,\ell{-}1\,]\bigr)^{i_\alpha+1-j}
\,x^{(\gamma_\alpha^{*'''})}\otimes x^{\delta_{i_\alpha+1,m}\epsilon_{m+1}+\mu'}\ne0,
\]
where $\gamma_\alpha^{*'''}=\bigl(\cdots, (m_j{-}1)\ell{+}(\ell{-}1), \cdots,  m_{i_\alpha+1}\ell{+}(\ell{-}1), (1-\delta_{i_\alpha+1,m})(m_{i_\alpha+2}\ell+1),\cdots\bigr)$. The fact that both
$x^{(\gamma_\alpha^{*'''})}\otimes x^{\delta_{i_\alpha+1,m}\epsilon_{m+1}+\mu'}$ and $x^{(\gamma_\beta)}\otimes x^{\nu'}$ have the same component energy grades gives the result.

If $i_\alpha=m$, then similar to (3.11) and by Proposition 4 (2.4) \& (2.5), we first act product operator $f_m(f_{m+1}f_m)\cdots (f_{m+r_m-1}\cdots f_{m+1}f_m)$ on $x^{(\gamma_\alpha)}\otimes x^{\mu'}$ to get a nonzero element $x^{(\gamma_\alpha^{**})}\otimes x^{\omega-\epsilon_{r_m+1}+\mu'}$, 
where $\gamma_\alpha^{**}=\bigl(\cdots, m_j\ell{+}(\ell{-}1),\cdots, m_{m-1}{+}(\ell{-}1),0\bigr)$.
Next let us act the following product operator on it. Then we get 
\begin{equation}
f_j^{\ell-1}\cdots f_{m-1}^{\ell-1}.\,\bigl(x^{(\gamma_\alpha^{**})}\otimes x^{\omega-\epsilon_{r_m+1}+\mu'}\bigr)=\bigl([\,\ell{-}1\,]\,!\,\bigr)^{m-j}x^{(\gamma_\alpha^{**'})}\otimes x^{\omega-\epsilon_{r_m+1}+\mu'},
\end{equation}
where $\gamma_\alpha^{**'}=\bigl(\cdots, m_j\ell,m_{j+1}\ell{+}(\ell{-}1),\cdots, m_m\ell{+}(\ell{-}1)\,\bigr)$.

Continually, acting product operator $f_mf_{m+1}\cdots f_{m+r_m}$ on (3.13), we get a nonzero element $x^{(\gamma_\alpha^{**''})}\otimes x^{\omega+\mu'}$, where 
$\gamma_\alpha^{**''}=\bigl(\cdots, m_j\ell,m_{j+1}\ell{+}(\ell{-}1),\cdots, m_m\ell{+}(\ell{-}2)\,\bigr)$. 

Finally, acting product operator $f_jf_{j+1}\cdots f_{m-1}$ on $x^{(\gamma_\alpha^{**''})}\otimes x^{\omega+\mu'}$, we get 
$$
f_jf_{j+1}\cdots f_{m-1}.\,\bigl(x^{(\gamma_\alpha^{**''})}\otimes x^{\omega+\mu'}\bigr)=
\bigl([\,\ell{-}1\,]\bigr)^{m-j}x^{(\gamma_\alpha^{**'''})}\otimes x^{\omega+\mu'},
$$
where $\gamma_\alpha^{**'''}=\bigl(\cdots, m_{j-1}\ell{+}(\ell{-}1), (m_j{-}1)\ell{+}(\ell{-}1),\cdots, m_m\ell{+}(\ell{-}1)\,\bigr)$, which has the same component energy grades as $\gamma_\beta$. So the same reason leads to our result.

This completes the proof.
\end{proof}

\subsection{Equivalence and ordering on $\langle m,n\rangle$-tuples}
Note that the set of $\langle m,n\rangle$-tuples of nonnegative integers indexes a basis of $\Omega_q(m|n)$ via the mapping $\chi:\mathbb{Z}_+^m\times \mathds{Z}_2^n\longrightarrow \Omega_q(m|n)$ such that $\chi(\langle \alpha,\mu\rangle)=x^{\langle \alpha,\mu\rangle}:=x^{(\alpha)}\otimes x^{\mu}$ with $\alpha\in \mathbb{Z}_+^m$ and $\mu\in \mathds{Z}_2^n$. Set 
\begin{gather}
\mathbb{Z}_+^m\times\mathds{Z}_2^n(s):=\{\,\langle \alpha,\mu\rangle\in \mathbb{Z}_+^m\times \mathds{Z}_2^n\mid |\langle\, \alpha,\mu\,\rangle|=s\,\},\\
\mathbb{Z}_+^m\times \mathds{Z}_2^n(s,\textbf{r}):=\{\,\langle \alpha,\mu\rangle\in \mathbb{Z}_+^m\times \mathds{Z}_2^n(s)\mid \alpha\leq \textbf{r}\,\}.
\end{gather}
These $\langle m, n\rangle$-tuples index bases of $\Omega_q^{(s)}(m|n)$ and $\Omega_q^{(s)}(m|n,\textbf{r})$, respectively.

Set $\mathcal{E}_i\langle \alpha,\mu\rangle:=\lfloor\frac{\alpha_i}{\ell}\rfloor$ and $\mathcal{E}\langle \alpha,\mu\rangle=(\mathcal{E}_1\langle \alpha,\mu\rangle,\cdots,\mathcal{E}_m\langle \alpha,\mu\rangle)$. Define an equivalence $\thicksim$ on $\mathbb{Z}_+^m\times \mathds{Z}_2^n(s)$ or $\mathbb{Z}_+^m\times \mathbb{Z}_2^n(s,\textbf{r})$, and introduce an ordering  $\succeq$ on $\mathbb{Z}_+^m\times \mathds{Z}_2^n(s)$ or $\mathbb{Z}_+^m\times \mathbb{Z}_2^n(s,\textbf{r})$ as follows: 

\begin{defi} For any $\langle \alpha,\mu\rangle$, $\langle \beta,\nu\rangle\in \mathbb{Z}_+^m\times \mathds{Z}_2^n(s)$ or $\mathbb{Z}_+^m\times \mathds{Z}_2^n(s,\textbf{r})$, i.e., $|\, \langle \alpha, \mu\rangle\,|=s=|\,\langle \beta,\nu\rangle\,|$,
define an equivalence $\thicksim\,:$  $\langle \alpha, \mu\rangle\thicksim\langle \beta,\nu\rangle \Longleftrightarrow \mathcal{E}\langle \alpha,\mu\rangle=\mathcal{E}\langle \beta,\nu\rangle$. 

For any $\langle \alpha,\mu\rangle$, $\langle \beta,\nu\rangle\in \mathbb{Z}_+^m\times \mathds{Z}_2^n(s)$ or $\mathbb{Z}_+^m\times \mathds{Z}_2^n(s,\textbf{r})$, 
i.e., $|\, \langle \alpha, \mu\rangle\,|=s=|\,\langle \beta,\nu\rangle\,|$,
introduce a partial order $\succeq$ on $\mathbb{Z}_+^m\times \mathds{Z}_2^n(s)$ or  $\mathbb{Z}_+^m\times \mathds{Z}_2^n(s,\textbf{r})$, and define\,$:$  $\langle \alpha, \mu\rangle\succeq\langle \beta,\nu\rangle  \Longleftrightarrow\mathcal{E}\langle \alpha,\mu\rangle\ge \mathcal{E}\langle \beta,\nu\rangle  \Longleftrightarrow\mathcal{E}_i\langle \alpha,\mu\rangle\ge \mathcal{E}_i\langle \beta,\nu\rangle$, for all $i\in I_0$. 
\end{defi}

\begin{theorem} $($Stratification Theorem of $\Omega^{(s)}_q\,)$
For any $\langle \alpha,\mu\rangle$, $\langle \beta,\nu\rangle\in \mathbb{Z}_+^m\times \mathds{Z}_2^n(s)$ or $\mathbb{Z}_+^m\times \mathds{Z}_2^n(s,\textbf{r})$, the $\mathcal U_q$-modules $\Omega_q^{(s)}(m|n)$ and $\Omega_q^{(s)}(m|n, \mathbf r)$ have the following stratification structure.

\text{\rm (i)} If $\langle \alpha,\mu\rangle\thicksim\langle \beta,\nu\rangle$, then $\,\mathcal U_q.\, x^{\langle \alpha,\mu\rangle}=\mathcal U_q.\, x^{\langle \beta,\nu \rangle}$. 

\text{\rm (ii)} If $\langle \alpha, \mu\rangle\succeq\langle \beta,\nu\rangle$, then  $\,\mathcal  U_q.\, x^{\langle \beta,\nu \rangle}\subsetneq\mathcal U_q.\, x^{\langle \alpha,\mu\rangle}$.

\text{\rm (iii)} If $|\,\mathcal{E}\langle \alpha,\mu\rangle\,|=|\,\mathcal{E}\langle \beta,\nu\rangle\,|$, but $\langle \alpha,\mu\rangle\nsim\langle \beta,\nu\rangle$, 
then $\,\mathcal Hd\bigl(\,\mathcal U_q.\, x^{\langle \alpha,\mu\rangle}\bigr)\cap \mathcal Hd\bigl(\,\mathcal U_q.\, x^{\langle \beta,\nu\rangle}\bigr)=0$, where $\mathcal Hd(M)$ indicates the head of module $M$.  
\end{theorem}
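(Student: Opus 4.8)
\noindent\emph{Proof strategy.} Parts (i) and (ii) are essentially repackagings of the lemmas just proved, so I would dispatch them first. For (i), the hypothesis $\langle\alpha,\mu\rangle\thicksim\langle\beta,\nu\rangle$ is by definition exactly the hypothesis $\text{Edeg}_i(x^{\langle\alpha,\mu\rangle})=\text{Edeg}_i(x^{\langle\beta,\nu\rangle})$ $(i\in I_0)$ of Lemma 11(ii), which yields $\mathcal U_q.\,x^{\langle\alpha,\mu\rangle}=\mathcal U_q.\,x^{\langle\beta,\nu\rangle}$ at once. For (ii), the case to settle is $\langle\alpha,\mu\rangle\succeq\langle\beta,\nu\rangle$ with $\langle\alpha,\mu\rangle\nsim\langle\beta,\nu\rangle$ (if $\langle\alpha,\mu\rangle\thicksim\langle\beta,\nu\rangle$ the two cyclic modules coincide by (i)); then $\text{Edeg}_i(x^{\langle\alpha,\mu\rangle})\ge\text{Edeg}_i(x^{\langle\beta,\nu\rangle})$ for all $i$ with strict inequality somewhere, so $\text{Edeg}(x^{\langle\alpha,\mu\rangle})>\text{Edeg}(x^{\langle\beta,\nu\rangle})$; Lemma 12 gives $\mathcal U_q.\,x^{\langle\beta,\nu\rangle}\subseteq\mathcal U_q.\,x^{\langle\alpha,\mu\rangle}$, and Lemma 9 makes the inclusion proper, because every element of $\mathcal U_q.\,x^{\langle\beta,\nu\rangle}$ has energy grade at most $\text{Edeg}(x^{\langle\beta,\nu\rangle})<\text{Edeg}(x^{\langle\alpha,\mu\rangle})$, so $x^{\langle\alpha,\mu\rangle}\notin\mathcal U_q.\,x^{\langle\beta,\nu\rangle}$.

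For (iii) the plan is to use a \emph{fine energy-grade filtration}. For $\mathbf p\in\mathbb Z_+^m$ put $W_{\mathbf p}:=\text{Span}_{\mathbb K}\{\,x^{\langle\gamma,\rho\rangle}\in\Omega_q^{(s)}\mid \text{Edeg}_j(x^{\langle\gamma,\rho\rangle})\le p_j,\ \forall\,j\in I_0\,\}$. By Lemma 9 each $W_{\mathbf p}$ is a $\mathcal U_q$-submodule, and, being spanned by parts of the monomial basis, $W_{\mathbf p}\cap W_{\mathbf p'}=W_{\mathbf p\wedge\mathbf p'}$ with $(\mathbf p\wedge\mathbf p')_j=\min(p_j,p_j')$. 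Set $\mathbf m:=\mathcal E\langle\alpha,\mu\rangle$, $\mathbf m':=\mathcal E\langle\beta,\nu\rangle$, $M_1:=\mathcal U_q.\,x^{\langle\alpha,\mu\rangle}$, $M_2:=\mathcal U_q.\,x^{\langle\beta,\nu\rangle}$, and $W_{<\mathbf m}:=\sum_{j\in I_0}W_{\mathbf m-\varepsilon_j}$. By Lemma 9, $M_1\subseteq W_{\mathbf m}$ and $M_2\subseteq W_{\mathbf m'}$; by Lemma 11(ii), $M_1$ contains every monomial of $\Omega_q^{(s)}$ whose energy-grade vector equals $\mathbf m$, whence $W_{\mathbf m}=M_1+W_{<\mathbf m}$ and therefore $M_1/(M_1\cap W_{<\mathbf m})\cong W_{\mathbf m}/W_{<\mathbf m}=:\bar T_{\mathbf m}$ (and likewise for $M_2$). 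The key numerical remark: since $|\mathbf m|=|\mathcal E\langle\alpha,\mu\rangle|=|\mathcal E\langle\beta,\nu\rangle|=|\mathbf m'|$ while $\mathbf m\ne\mathbf m'$ (this being $\langle\alpha,\mu\rangle\nsim\langle\beta,\nu\rangle$), there are coordinates $j_0,j_1\in I_0$ with $m_{j_0}>m'_{j_0}$ and $m_{j_1}<m'_{j_1}$; hence $W_{\mathbf m\wedge\mathbf m'}\subseteq W_{\mathbf m-\varepsilon_{j_0}}\subseteq W_{<\mathbf m}$ and, symmetrically, $W_{\mathbf m\wedge\mathbf m'}\subseteq W_{<\mathbf m'}$. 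Consequently $M_1\cap M_2\subseteq W_{\mathbf m}\cap W_{\mathbf m'}=W_{\mathbf m\wedge\mathbf m'}$, so $M_1\cap M_2\subseteq M_1\cap W_{<\mathbf m}$ and $M_1\cap M_2\subseteq M_2\cap W_{<\mathbf m'}$.

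Next I would reduce the assertion of (iii) to a single structural claim: that $M_1\cap W_{<\mathbf m}\subseteq\operatorname{rad}(M_1)$, and likewise $M_2\cap W_{<\mathbf m'}\subseteq\operatorname{rad}(M_2)$. Granting these, $M_1\cap M_2\subseteq\operatorname{rad}(M_1)\cap\operatorname{rad}(M_2)$, and then --- using $\operatorname{rad}(M_1{+}M_2)=\operatorname{rad}(M_1)+\operatorname{rad}(M_2)$, valid since $\mathcal U_q=u_q(\mathfrak{gl}(m|n))$ is finite-dimensional --- a one-line chase shows $(M_1+\operatorname{rad}(M_2))\cap(M_2+\operatorname{rad}(M_1))=\operatorname{rad}(M_1)+\operatorname{rad}(M_2)$, i.e. the images of $M_1$ and $M_2$ in $\mathcal Hd(M_1{+}M_2)$ --- the copies of $\mathcal Hd(\mathcal U_q.\,x^{\langle\alpha,\mu\rangle})$ and $\mathcal Hd(\mathcal U_q.\,x^{\langle\beta,\nu\rangle})$ meant in the statement --- meet only in $0$.

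Finally, since $M_1/(M_1\cap W_{<\mathbf m})\cong\bar T_{\mathbf m}$, the claim $M_1\cap W_{<\mathbf m}\subseteq\operatorname{rad}(M_1)$ is \emph{equivalent} to the semisimplicity of the energy-graded layer $\bar T_{\mathbf m}$ as a $\mathcal U_q$-module, equivalently to the fine energy-grade filtration refining the Loewy filtration. This is the main obstacle and the technical heart of the matter. I expect to prove it by using the explicit action formulas of Proposition 4 together with the root-of-unity arithmetic of Lemma 1: writing $\alpha_i=m_i\ell+\alpha_i^{(0)}$ with $0\le\alpha_i^{(0)}<\ell$, those formulas decouple, on $\bar T_{\mathbf m}$, the $\ell$-quotient datum $\mathbf m$ from the residue datum $\alpha^{(0)}$ in a Frobenius-type fashion, identifying $\bar T_{\mathbf m}$ with a direct sum of simple modules of the shape appearing in Theorem 5 (tensored, where needed, with a classical $\mathfrak{gl}$-factor built from $\mathbf m$), hence semisimple. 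The delicate points to control are the odd node $i=m$, where $e_m$ and $f_m$ interchange the divided-power and the exterior coordinates, and the fact that the vanishing $[\,\ell\,]=0$ does not simply disappear across energy-block boundaries --- precisely the phenomena the notion of energy grade was introduced to capture.
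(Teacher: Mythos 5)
Your treatment of (i) and (ii) is correct and is exactly the paper's: (i) is Lemma 11, and (ii) is Lemma 12 for the inclusion together with Lemma 9 for properness (your handling of the degenerate case $\langle\alpha,\mu\rangle\thicksim\langle\beta,\nu\rangle$, where strict inclusion cannot hold, matches the evident intent of the statement).

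For (iii) there is a genuine gap. Your reduction is sound and even sharper than the paper's one-line citation up to a point: $W_{\mathbf p}$ is a submodule by Lemma 9, $W_{\mathbf m}=M_1+W_{<\mathbf m}$ by Lemma 11, and the numerical observation that $\mathbf m\neq\mathbf m'$ with $|\mathbf m|=|\mathbf m'|$ forces $M_1\cap M_2\subseteq W_{\mathbf m\wedge\mathbf m'}\subseteq W_{<\mathbf m}\cap W_{<\mathbf m'}$ is a clean quantitative version of Lemma 10. But everything then rests on the claim $M_1\cap W_{<\mathbf m}\subseteq\operatorname{rad}(M_1)$, equivalently the semisimplicity (in fact simplicity) of the layer $W_{\mathbf m}/W_{<\mathbf m}\cong\Omega_q^{(s-|\mathbf m|\ell)}(m|n,\mathbf 1)$, and you explicitly leave this unproved, offering only the expectation of a Frobenius-type decoupling. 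This is not a cosmetic omission: a proper submodule of a cyclic module need not lie in its radical, so without the layer-simplicity input the containment fails in general. Worse, the claim you need is essentially Theorem 23(2) of the paper (identification of the Loewy layers with the simple modules of Theorem 5(2)), which is proved \emph{after} and \emph{using} Theorem 14; so you cannot close the gap by appealing to the Loewy filtration without circularity, and your sketch does not yet amount to an independent proof. By contrast, the paper disposes of (iii) directly from Lemma 10 combined with (ii) --- no monomial of energy vector $\mathbf m$ (each of which generates $M_1$ by Lemma 11) can lie in $M_2$, and symmetrically --- without passing through radicals of the individual cyclic modules at all; if you want to keep your more structural route, you must actually carry out the decoupling argument for $W_{\mathbf m}/W_{<\mathbf m}$, including the odd node $i=m$.
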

\begin{proof} (i) is just Lemma 11. (ii) is given by Lemma 12.
(iii) follows from Lemma 10 and (ii).
\end{proof}

\section{Indecomposability of $\mathcal U_q$-module $\Omega_q^{(s)}(m|n,\textbf{r})$: its Loewy filtration and rigidity}

\subsection{Lowest energy grade $E_0(s)$ and highest one $E(s)$ of $\Omega_q^{(s)}(m|n,\textbf{r})$}
Given $0\leq s \leq N$, where $N=|\,\textbf{r}\,|+n=m\,(r\ell{-}1)+n=m\,(r{-}1)\ell{+}m\,(\ell{-}1){+}n$, denote by $E_0(s)$, $E(s)$ the lowest and highest energy grade of elements in $\Omega_q^{(s)}(m|n,\textbf{r})$, respectively. 

For $\langle\alpha,\mu\rangle\in \mathbb{Z}_+^m\times\mathds{Z}_2^n(s,\textbf{r})$, we have $s=|\,\alpha\,|+|\,\mu\,|$, and set 
$\alpha_i=\mathcal E_i\langle \alpha,\mu\rangle\ell+a_i$ $(0\le a_i\le \ell{-}1)$, 
\begin{gather}
\gamma\langle\alpha,\mu\rangle:=\langle \alpha, \mu\rangle-\ell\mathcal{E}\langle\alpha,\mu\rangle=\langle a_1,\cdots,a_m; \mu_1,\cdots,\mu_n\rangle=\gamma_0\langle\alpha,\mu\rangle+\gamma_1\langle\alpha,\mu\rangle,\\
\gamma_0\langle\alpha,\mu\rangle=\langle a_1,\cdots,a_m; 0,\cdots,0\rangle,\qquad
\gamma_1\langle\alpha,\mu\rangle=\langle 0,\cdots,0; \mu_1,\cdots,\mu_n\rangle.
\end{gather} 
Clearly, $\gamma_0\langle\alpha,\mu\rangle\leq \textbf{1}$.

By definition (of $E_0(s)$, $E(s)$\,), there exist $\langle\alpha,\mu\rangle,\langle\beta,\nu\rangle\in \mathbb{Z}_+^m\times\mathds{Z}_2^n(s,\textbf{r})$ such that $E_0(s)=|\,\mathcal{E}\langle\alpha,\mu\rangle\,|$ and $E(s)=|\,\mathcal{E}\langle\beta,\nu\rangle\,|$, as well as $s=|\,\langle\alpha,\mu\rangle\,|=\ell\cdot E_0(s)+|\,\gamma\langle\alpha,\mu\rangle\,|$ with $|\,\gamma\langle\alpha,\mu\rangle\,|=\sum_{i=1}^{m}a_i+\sum_{j=1}^n\mu_j$ as large as possible (so in this case  $|\,\mu\,|$ is maximal), and $s=|\,\langle\beta,\nu\rangle\,|=\ell\cdot E(s)+|\,\gamma\langle\beta,\nu\rangle\,|$, with $0\le |\,\gamma\langle\beta,\nu\rangle\,|\le \ell{-}1$ as small as possible (so in this case, $|\,\nu\,|=0$).

The following Lemma describes the energy grade of any element in $\Omega_q^{(s)}(m|n,\textbf{r})$ by considering the different intervals $[\,0, \,N\,]$ where $s$ may fall.
\begin{lemma}
Suppose $m\geq 2$, and $\textbf{char}(q)=\ell\geq 3$. For all $s\in\bigl[\,0, N\,\bigr]$, with $N=m\,(r\ell{-}1)+n$.

\text{\rm (1)} For \,$s\in \bigl[\,0, \,\ell{-}1\,\bigr]:$ \,$E_0(s)=E(s)=0$.

\text{\rm (2)} For \,$s\in\bigl[\,\ell, \,m\,(\ell{-}1)+n\,\bigr]=\cup_{j=1}^{m-1}\cup_{n'=0}^n\bigl[\,j\,(\ell{-}1){+}n'{+}1,\, (j{+}1)(\ell{-}1){+}n'\,\bigr]:$ 
\begin{equation*}
E_0(s)=0, \quad 1\leq E(s)\leq n_1+m-m_1-\sum_{i<0}\delta_{i,n_2-m_2}=
E\bigl(\,m\,(\ell{-}1)+n\,\bigr),
\end{equation*}
where $m=m_1\ell+m_2 \ \,(\,0\leq m_2\leq \ell{-}1\,)$ and $n=n_1\ell+n_2\ \,(\,0\leq n_2\leq \ell{-}1\,)$. More precisely,

\text{\rm (i)} For \,$s=j\,(\ell{-}1){+}h{+}n'$ \ $(\,0\leq n^{\prime}\leq n$, $1\leq j\leq m{-}1$, $1\leq h\leq \ell{-}1\,)$,
$$
E(s)=(j-j_1+n_1^{\prime})-\sum_{i=-(\ell{-}1)}^{-1}\delta_{i,n_2^{\prime}+h-j_2}
+\sum_{i=\ell}^{2l-2}\delta_{i,n_2^{\prime}+h-j_2},
$$ 
where $n^{\prime}=n_1^{\prime}\ell+n_2^{\prime}$, $j=j_1\ell+j_2\, \ (\,0\leq n_2',\ j_2\leq \ell{-}1\,)$.

\text{\rm (ii)} For \,$s\in\bigl[\,j\,(\ell{-}1){+}n'{+}1, \,(j{+}1)(\ell{-}1){+}n'\,\bigr]$ with $1\leq j\leq m{-}1$ and $0\leq n^{\prime}\leq n$, 
$$
j-j_1+n_1^{\prime}-1\leq E(s)\leq j-j_1+n_1^{\prime}+1.
$$

\text{\rm (3)}  For \,$s\in\bigl[\,m\,(\ell{-}1){+}n{+}1, \,N{-}\ell\,\bigr]=\cup_{k=1}^{m(r{-}1){-}1}\cup_{h=0}^{\ell{-}1}\bigl[\,k\ell+s(m|n), \,k\ell{+}s(m|n){+}\ell{-}1\,\bigr]:$  
\begin{gather*}
E_0(s)=k, \quad k+1\leq E(s)\leq m\,(r-1), \quad \text{\it for } \ s=k\ell+(m{-}1)(\ell{-}1)+h+n, 
\end{gather*} 
where $s(m|n)=(m{-}1)(\ell{-}1)+n$, $0\leq h\leq \ell{-}1$, $1\leq k\leq m\,(r{-}1)-1$. Namely, 

\text{\rm (i)} for \,$k\leq m\,(r{-}1)-E\bigl(s(m{+}1|n)\bigr)$, 
$$
k+E\bigl(s(m|n)\bigr)\leq E(s)\leq k+E\bigl(s(m{+}1|n)\bigr).
$$

\text{\rm (ii)}  for \,$k>m\,(r{-}1)-E\bigl(s(m{+}1|n)\bigr)$, 
$$E(s)=m\,(r{-}1),$$
where $E\bigl(s(m|n)\bigr)\geq1$ under the assumption $m\geq 2$.

\text{\rm (4)} For \,$s\in\bigl[\,N-\ell+1, N\,\bigr]:$ \ $E_0(s)=E(s)=m\,(r{-}1)$.
\end{lemma}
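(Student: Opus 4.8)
The plan is to prove the four statements by a direct, interval-by-interval analysis of the defining expression for the energy grade, namely $\mathrm{Edeg}(x^{(\alpha)}\otimes x^\mu)=\sum_{i=1}^m\lfloor\alpha_i/\ell\rfloor$ subject to the constraints $\alpha\le\mathbf r$ (i.e. $\alpha_i\le r\ell-1$), $\mu\in\mathds Z_2^n$, and $|\alpha|+|\mu|=s$. The key observation, which I would isolate first, is that $E_0(s)$ is computed by making $|\alpha|$ as \emph{small} as possible — which means putting as much of $s$ as possible into the odd variables, so $|\mu|$ is maximal (bounded by $\min(s,n)$) and the remaining $|\alpha|=s-|\mu|$ is spread so that each $\lfloor\alpha_i/\ell\rfloor$ is forced only when unavoidable — while $E(s)$ is computed by making $\sum\lfloor\alpha_i/\ell\rfloor$ as \emph{large} as possible, which means making $|\alpha|$ large (so $|\mu|=0$ is best when feasible) and concentrating mass into full blocks of size $\ell$. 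So the proof reduces to two elementary extremal problems over $\mathbb Z_+^m$, one minimizing and one maximizing $\sum\lfloor\alpha_i/\ell\rfloor$ given $\sum\alpha_i$ fixed in a range and each $\alpha_i\le r\ell-1$.

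For part (1), when $s\le\ell-1$ we have $|\alpha|\le s\le\ell-1$, so every $\alpha_i\le\ell-1$ and hence $\lfloor\alpha_i/\ell\rfloor=0$ forcing $\mathrm{Edeg}=0$; thus $E_0=E=0$. For part (4), when $s\ge N-\ell+1=m(r\ell-1)+n-\ell+1$, the constraint $\alpha_i\le r\ell-1$ and $|\mu|\le n$ force $|\alpha|\ge s-n\ge m(r\ell-1)-\ell+1$, so the total deficit from the maximum $m(r\ell-1)$ is at most $\ell-1+$ (whatever is absorbed by $\mu$); a short counting argument shows $\sum\lfloor\alpha_i/\ell\rfloor$ cannot drop below $m(r-1)$ and of course cannot exceed it, giving $E_0=E=m(r-1)$. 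Parts (2) and (3) are the substantive ones: there $E_0(s)$ stays pinned (at $0$ on the middle-low range, at $k$ on the range near the top) because one can still route enough degree through the odd variables and the low-index even variables, while $E(s)$ grows. For these I would write $s$ in the indicated normal form ($s=j(\ell-1)+h+n'$, etc.), exhibit an explicit optimal tuple $\langle\alpha,\mu\rangle$ achieving the claimed $E(s)$ — essentially: fill $i_\alpha-1$ even slots to level $\ell-1$, pack the surplus into full $\ell$-blocks from the left subject to $\alpha_i\le r\ell-1$, and park the residue in $\mu$ — and then prove optimality by the exchange argument that moving one unit of degree out of a partially-filled block never increases $\sum\lfloor\cdot/\ell\rfloor$. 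The floor-function corrections $\pm\sum_i\delta_{i,\cdot}$ and the splittings $m=m_1\ell+m_2$, $n=n_1\ell+n_2$, $n'=n_1'\ell+n_2'$, $j=j_1\ell+j_2$ are exactly the bookkeeping of how the residue of $s$ modulo $\ell$ interacts with the residues of $m$, $n$, $j$ when one converts "total surplus degree" into "number of completed $\ell$-blocks".

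I expect the main obstacle to be part (3)(i)–(ii) and the boundary case in (2)(i): verifying that the explicit witness tuple is genuinely extremal when the cap $\alpha_i\le r\ell-1$ becomes active, i.e. when one can no longer pour all the surplus into the first even variable and must redistribute. This is where the bound $k\le m(r-1)-E(s(m+1|n))$ versus $k>m(r-1)-E(s(m+1|n))$ dichotomy comes from, and where one must check the recursive identities relating $E(s)$ in $(m|n)$ to $E$ in $(m+1|n)$; the inductive structure on $m$ (using $m\ge 2$) and on the interval index $k$ will be needed to close the argument. The $\delta$-correction terms will require careful case splitting on the sign of quantities like $n_2'+h-j_2$ relative to $0$, $\ell-1$, and $2\ell-2$, but each case is a routine floor computation once the right witness is in hand; I would defer those to a case table rather than belabour them in the main text.
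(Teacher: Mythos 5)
Your proposal is correct and takes essentially the same route as the paper: reduce each claim to the two extremal problems of minimizing/maximizing $\sum_i\lfloor\alpha_i/\ell\rfloor$ under $|\alpha|+|\mu|=s$ and $\alpha\le\mathbf r$, exhibit explicit witness tuples (spreading degree to level $\ell-1$ per slot and into $\mu$ for $E_0(s)$, packing full $\ell$-blocks with $|\mu|=0$ for $E(s)$), and track the corrections via the base-$\ell$ expansions of $s$, $j$, $n'$, $m$, $n$, with part (3) handled through the shift relation to $E(s(m|n))$ and $E(s(m{+}1|n))$ and the cap $m(r{-}1)$. Your exchange-argument justification of extremality and your explicit attention to when the cap $\alpha_i\le r\ell-1$ becomes active are, if anything, slightly more careful than the paper's proof, which largely reads the answer off the expansion $s=(j-j_1+n_1')\ell+(n_2'+h-j_2)$.
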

\begin{proof}
\text{\rm (1)} is obvious. 

(4) In this case, note that for any $x^{\langle\alpha,\mu\rangle} \in \Omega_q^{(s)}(m|n,\textbf{r})$, $\langle\alpha,\mu\rangle$ is of the form $\langle\, (r{-}1)\ell+a_1,\cdots,(r{-}1)\ell+a_m;\,\mu_1,\cdots,\mu_n\,\rangle$ with $\gamma_0\langle\alpha,\mu\rangle\leq \textbf{1}$, such that $|\gamma\langle\alpha,\mu\rangle|=(m{-}1)(\ell{-}1)+h+n$ with $0\leq h\leq \ell{-}1$, and $\mathcal{E}\langle\alpha,\mu\rangle=(\underset{m}{\underbrace{r-1,\cdots,r-1}})$. So, $E_0(s)=E(s)=m\,(r{-}1)$.

\text{\rm (2)} When $\ell\leq s\leq m\,(\ell{-}1)+n$: it is clear that $E(s)_0=0$, as even for the extreme case $s=m\,(\ell{-}1)+n$, taking $\langle\alpha,\mu\rangle=\langle\, \underset{m}{\underbrace{\ell{-}1,\cdots, \ell{-}1}};\,\underset{n}{\underbrace{1,\cdots,1}}\,\rangle$, we get $\gamma\langle\alpha,\mu\rangle=\langle\alpha,\mu\rangle$ and $\mathcal{E}\langle\alpha,\mu\rangle=\textbf{0}$, that is, $E_0(s)=0$.

In order to estimate $E(s)$, we can assume that $j\,(\ell{-}1)+n'+1\leq s\leq (j+1)(\ell{-}1)+n'$, for $1\leq j\leq m-1$, $0\leq n^{\prime}\leq n$. Let us consider the general case: $s=j\,(\ell{-}1)+h+n'$, with $1\leq h\leq \ell{-}1$, $1\leq j\leq m{-}1$ and $0\leq n^{\prime}\leq n$. Write $j=j_1\ell+j_2$, $n^{\prime}=n_1^{\prime}\ell+n_2^{\prime}$ with $0\leq j_2, \, n_2^{\prime}\leq \ell{-}1$. Then rewrite $s=j\,(\ell{-}1)+h+n'=(j-j_1+n_1^{\prime})\ell+(n_2^{\prime}+h-j_2)$. 

Clearly, when $-(\ell{-}1)\leq n_2^{\prime}+h-j_2\leq -1$, $E(s)=j-j_1+n_1^{\prime}-1$; and when $\ell\leq n_2^{\prime}+h-j_2\leq 2\ell-2$, $E(s)=j-j_1+n_1^{\prime}+1$; otherwise, $E(s)=j-j_1+n_1^{\prime}$. That is,
$$
E(s)=(j-j_1+n_1^{\prime})-\sum\limits_{i=-(\ell{-}1)}^{-1}\delta_{i,n_2^{\prime}+h-j_2}
+\sum\limits_{i=\ell}^{2\ell-2}\delta_{i,n_2^{\prime}+h-j_2}.
$$
So, we obtain $j-j_1+n_1^{\prime}-1\leq E(s)\leq j-j_1+n_1^{\prime}+1$, for $j\,(\ell{-}1)+n'+1\leq s\leq (j+1)(\ell{-}1)+n'$ with $1\leq j\leq m-1$ and $0\leq n^{\prime}\leq n$. 

For the extreme case $s=m\,(\ell{-}1)+n$, we get $s=m\,(\ell{-}1)+n=(n_1+m-m_1)\ell+(n_2-m_2)$ with $m=m_1\ell+m_2$, $n=n_1\ell+n_2$, where $0\leq m_2, \, n_2\leq \ell{-}1$. In this case, 
$$
E(s)=n_1+m-m_1-\sum_{i<0}\delta_{i,n_2-m_2}.
$$

\text{\rm (3)} When $m\,(\ell{-}1)+n+1\leq s \leq N-\ell$: 

Firstly, we rewrite $N-\ell=m\,(r\ell{-}1)-\ell+n=(m\,(r{-}1)-1)\ell+m\,(\ell{-}1)+n$. So, now for the given $s$ as above, we can put it into a certain strictly smaller interval: 
$$
k\,\ell+s(m|n)\leq s\leq k\,\ell+s(m{+}1|n),
$$ 
for some $k$ with $1\leq k\leq m\,(r{-}1)-1$. Namely, $s=k\,\ell+s(m|n)+h$ with $0\leq h\leq \ell{-}1$.

Secondly, write $k=k_1m+k_2$ with $0\leq k_2\leq m-1$. Taking
$$
\langle\alpha,\mu\rangle=\langle\,(\underset{k_2}{\underbrace{k_1+1,\cdots,k_1+1}}, k_1,\cdots,k_1)\ell;\,\underset{n}{\underbrace{0,\cdots,0}}\,\rangle
+\langle\, h,\underset{m-1}{\underbrace{\ell{-}1,\cdots,\ell{-}1}};\,1,\cdots,1\,\rangle,
$$
we obtain $|\,\langle\alpha,\mu\rangle\,|=s$, that is, $\langle\alpha,\mu\rangle\in \mathbb{Z}_+^m\times \mathds{Z}_2^n(s,\textbf{r})$, $\mathcal{E}\langle\alpha,\mu\rangle=(\underset{k_2}{\underbrace{k_1{+}1,\cdots,k_1{+}1}},k_1,\cdots,k_1)$, $\gamma\langle\alpha,\mu\rangle=\langle\, h,\underset{m-1}{\underbrace{\ell{-}1,\cdots,\ell{-}1}};1,\cdots,1\,\rangle$ with $|\,\gamma\langle\alpha,\mu\rangle\,|=s(m|n)+h$ large enough. So, $E_0(s)=|\,\mathcal{E}\langle\alpha,\mu\rangle\,|=k$.

Finally, as for the estimate of $E(s)$, for $k\ell+(m{-}1)(\ell{-}1)+n\leq s\leq k\ell+m\,(\ell{-}1)+n$, in view of \text{\rm (2)}, $k+E\bigl(s(m|n)\bigr)\leq E(s)\leq k+E\bigl(s(m{+}1|n)\bigr)$. 

Still write $m=m_1\ell+m_2$, $n=n_1\ell+n_2$, with $0\leq m_2,\ n_2\leq \ell{-}1$. Then we have 
\begin{gather*}
(m{-}1)(\ell{-}1)+n=(m+n_1-m_1-1)\ell+(n_2-m_2+1),\\
E\bigl(s(m|n)\bigr)=m+n_1-m_1-1+\delta_{\ell{-}1,n_2-m_2}-\sum_{i=-(\ell{-}1)}^{-1}\delta_{i,n_2-m_2}.
\end{gather*}
Similar discussion for $m\,(\ell{-}1)+n$ yields $E\bigl(s(m{+}1|n)\bigr)=m+n_1-m_1-\sum_{i=-(\ell{-}1)}^{-1}\delta_{i,n_2-m_2}$. So for the above $s$, we get
$$
k{+}m{+}n_1{-}m_1{-}1{+}\delta_{\ell{-}1,n_2-m_2}{-}\sum\limits_{i=-(\ell{-}1)}^{-1}\delta_{i,n_2-m_2}\leq E(s)\leq k{+}m{+}n_1{-}m_1{-}\sum\limits_{i=-(\ell{-}1)}^{-1}\delta_{i,n_2-m_2},
$$
only if $k+m+n_1-m_1-\sum_{i=-(\ell{-}1)}^{-1}\delta_{i,n_2-m_2}\leq m\,(r{-}1)$. Otherwise, $E(s)=m\,(r{-}1)$.

This completes the proof. 
\end{proof}

\subsection{Socle of $\Omega_q^{(s)}(m|n,\textbf{r})$ determined by the lowest energy grade}
The notions of highest and lowest energy grade will play useful roles in the $\mathcal U_q$-module structure of $\Omega_q^{(s)}(m|n,\textbf{r})$. Those elements of highest energy grade $E(s)$ compose the indispensable generators of $\Omega_q^{(s)}(m|n,\textbf{r})$, meanwhile, those of lowest energy grade $E_0(s)$ constitute exactly the socle generators. 

\begin{theorem}
For the $\mathcal U_q$-modules $\Omega_q^{(s)}(m|n,\textbf{r})$, with $0\leq s\leq N=m\,(r\ell{-}1)+n$, we have

\text{\rm (1)} A submodule $\mathfrak{V}_y=\mathcal U_q.\, y$ is simple if and only if $\text{Edeg}\,(y)=E_0(s)$, where $s=|\,y\,|$.

\text{\rm (2)} \,$\emph{\text{Soc}}\ \Omega_q^{(s)}(m|n,\textbf{r})=\emph{\text{Span}}_{\,\mathbb{K}}\Bigl\{\,x^{\langle\alpha, \mu\rangle} \in \Omega_q^{(s)}(m|n,\textbf{r})\ \Bigl|\ |\,\mathcal{E}\langle\alpha,\mu\rangle\,|=E_0(s)\,\Bigr\}$.

\text{\rm (3)} \,$\Omega_q^{(s)}(m|n,\textbf{r})=\sum_{\langle\beta,\nu\rangle\in \mathbb{Z}^m\times \mathds{Z}_2^n, \,|\langle\beta,\nu\rangle|=E(s)}\mathfrak{V}_{\langle\beta,\nu\rangle}$, where $\,\mathfrak{V}_{\langle\beta,\nu\rangle}=\mathcal U_q.\, x^{\langle \beta,\nu\rangle}$.
\end{theorem}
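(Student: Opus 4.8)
The plan is to prove the three assertions in sequence, using the stratification results (Theorem~17) and the energy-grade lemmas (Lemmas~9--12) as the main tools, together with the interval analysis of $E_0(s)$ and $E(s)$ from Lemma~18.

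For (1): Suppose first $\text{Edeg}\,(y)=E_0(s)$. Writing $y=\sum_\alpha k_\alpha x^{(\alpha)}\otimes x^\mu$, by definition of $E_0(s)$ every monomial appearing has energy grade exactly $E_0(s)$, and moreover every monomial $x^{\langle\alpha,\mu\rangle}$ of degree $s$ with $|\mathcal E\langle\alpha,\mu\rangle|=E_0(s)$ must in fact have $\mathcal E_i\langle\alpha,\mu\rangle$ as small as possible in each coordinate (since $E_0(s)$ is minimal there is essentially no freedom to trade energy between coordinates); hence all such monomials are $\thicksim$-equivalent, so by Theorem~17(i) they generate the same submodule, call it $L$. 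I would then argue $\mathfrak V_y\subseteq L$ trivially, and conversely that $\mathfrak V_y$ contains some monomial of lowest energy grade: applying a lowering argument as in the proof of Lemma~12 (acting by suitable $f_i$'s, or $z_{\alpha,\mu}$ as constructed before Lemma~11) to any monomial in $y$ and using that all these monomials land in $L$, one gets $L\subseteq\mathfrak V_y$, so $\mathfrak V_y=L$. Simplicity of $L$ then follows because any nonzero submodule $W\subseteq L$ contains a nonzero element $w$, and $\text{Edeg}(w)\ge E_0(s)$ while $\text{Edeg}(w)\le E_0(s)$ by Lemma~9 applied to the generator; hence $\text{Edeg}(w)=E_0(s)$ and the same lowering argument shows $W\supseteq L$. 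Conversely, if $\text{Edeg}(y)=e>E_0(s)$, then Lemma~12 (applied to $y$ versus a monomial of strictly smaller energy grade dominated coordinatewise, which exists by the coordinatewise structure of $\gamma_\alpha$) produces a proper nonzero submodule $\mathcal U_q.x^{\langle\beta,\nu\rangle}\subsetneq\mathfrak V_y$, so $\mathfrak V_y$ is not simple.

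For (2): Let $S$ denote the span on the right-hand side. By (1), every $x^{\langle\alpha,\mu\rangle}$ with $|\mathcal E\langle\alpha,\mu\rangle|=E_0(s)$ generates a simple submodule, and as noted above all of these coincide with the single simple module $L$; more care is needed if distinct $\thicksim$-classes of lowest energy grade exist, but by the minimality of $E_0(s)$ and the coordinatewise domination forced on lowest-grade tuples, all lowest-grade tuples are $\thicksim$-equivalent, so $S=L$ is simple and hence $S\subseteq\text{Soc}\,\Omega_q^{(s)}$. For the reverse inclusion, take any simple submodule $T$; pick $0\neq t\in T$ and choose a monomial $x^{\langle\alpha,\mu\rangle}$ appearing in $t$ of maximal energy grade among those in $t$. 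If $\text{Edeg}(x^{\langle\alpha,\mu\rangle})>E_0(s)$, then $t$ generates a non-simple module by (1) (since $\text{Edeg}(t)=\text{Edeg}(x^{\langle\alpha,\mu\rangle})>E_0(s)$), forcing $T=\mathcal U_q.t$ to be non-simple unless $T\subsetneq\mathcal U_q.t$; iterating/using Lemma~12 one descends to a proper submodule of $T$, contradicting simplicity. Hence $\text{Edeg}(t)=E_0(s)$ for every $0\neq t\in T$, which forces $T\subseteq S$. Summing over all simple submodules gives $\text{Soc}\,\Omega_q^{(s)}\subseteq S$.

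For (3): Let $\mathfrak M=\sum_{|\langle\beta,\nu\rangle|=E(s)}\mathfrak V_{\langle\beta,\nu\rangle}$. The inclusion $\mathfrak M\subseteq\Omega_q^{(s)}(m|n,\mathbf r)$ is clear. For the reverse, it suffices to show that every basis monomial $x^{\langle\alpha,\mu\rangle}$ of $\Omega_q^{(s)}(m|n,\mathbf r)$ lies in $\mathfrak M$. Given such a monomial, I would produce a tuple $\langle\beta,\nu\rangle$ of degree $s$ with $\alpha\le\mathbf r$ replaced by $\beta\le\mathbf r$, with $|\mathcal E\langle\beta,\nu\rangle|=E(s)$ and $\mathcal E_i\langle\beta,\nu\rangle\ge\mathcal E_i\langle\alpha,\mu\rangle$ for all $i$: concretely, push as much degree as possible into the energy part by concentrating mass in the first coordinates up to the cap $r\ell-1$ and emptying the fermionic part ($|\nu|=0$), exactly as in the ``$E(s)$ as large as possible'' discussion preceding Lemma~18 and realized by the explicit tuples constructed in the proof of Lemma~18(2)--(3). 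Then either $\langle\beta,\nu\rangle\thicksim\langle\alpha,\mu\rangle$ — impossible unless $|\mathcal E\langle\alpha,\mu\rangle|=E(s)$ already, in which case we are done by Theorem~17(i) — or $\langle\beta,\nu\rangle\succ\langle\alpha,\mu\rangle$ strictly, and Theorem~17(ii) (i.e.\ Lemma~12) gives $x^{\langle\alpha,\mu\rangle}\in\mathcal U_q.x^{\langle\beta,\nu\rangle}=\mathfrak V_{\langle\beta,\nu\rangle}\subseteq\mathfrak M$. The main obstacle is the combinatorial step of showing that for \emph{every} basis monomial one can find a dominating tuple of highest energy grade $E(s)$ subject to the truncation constraint $\beta\le\mathbf r$; this is where the case analysis of Lemma~18 — which interval $[0,N]$ contains $s$, and the corresponding explicit optimal tuples — must be invoked, and one has to check that the coordinatewise domination $\mathcal E\langle\beta,\nu\rangle\ge\mathcal E\langle\alpha,\mu\rangle$ can always be arranged simultaneously with $|\mathcal E\langle\beta,\nu\rangle|=E(s)$, which is the genuinely delicate point.
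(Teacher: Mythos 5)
Your argument for part (3) and for the ``non-simple'' direction of part (1) follows the paper's route (construct a coordinatewise-dominating tuple of top energy grade $E(s)$ subject to $\beta\le\mathbf r$ and apply Lemma 12; respectively, lower the grade of a top-grade monomial of $y$ to exhibit a proper submodule). But there is a genuine error at the heart of your treatment of (1) and (2): the claim that all $\langle m,n\rangle$-tuples of degree $s$ with $|\mathcal E\langle\alpha,\mu\rangle|=E_0(s)$ are $\thicksim$-equivalent because ``minimality leaves no freedom to trade energy between coordinates.'' Minimality fixes only the \emph{total} $|\mathcal E|$, not the coordinatewise vector $\mathcal E$. For instance with $m=3$, $\ell=3$, $r=2$, $s=12$ one has $E_0(s)=2$, and $\langle 5,5,2;\mathbf 0\rangle$, $\langle 5,2,5;\mathbf 0\rangle$ have $\mathcal E=(1,1,0)$ and $(1,0,1)$ respectively: same lowest total grade, inequivalent under $\thicksim$. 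This is exactly the situation of Theorem 17(C) and Example 25(4)--(5), where the socle is indexed by the whole set $\mathcal K(E_0(s))$ and is \emph{semisimple but not simple}. Consequently your identification $S=L$ with a single simple module is wrong, and the inclusion $S\subseteq\text{Soc}$ does not follow from your argument.

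The repair is what the paper actually does: within one $\thicksim$-class of lowest grade, Lemma 11 gives a single simple submodule (simple by (1), since no element of strictly smaller grade exists); across distinct classes of the same lowest grade, Lemma 10 and Theorem 14(iii) give that the corresponding cyclic submodules intersect trivially, so $S$ is a \emph{direct sum} of simples, hence contained in the socle. Your reverse inclusion $\text{Soc}\subseteq S$ (every simple submodule consists of elements of grade $E_0(s)$, by the lowering argument plus Lemma 9) is sound. Note also that your ``if'' direction of (1) is only argued for $y$ supported on a single equivalence class; for a general $y$ of grade $E_0(s)$ whose monomials spread over several inequivalent lowest-grade classes the simplicity of $\mathcal U_q.\,y$ requires a separate argument (the constituent simples are isomorphic by Theorem 23, so the cyclic module is a diagonal copy), which neither you nor, admittedly, the paper's terse proof spells out.
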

\begin{proof}
\text{\rm (1)} If $\text{Edeg}\,(y)>E_0(s)$, then by Definition 8, in the expression of $y=\sum_{\langle\alpha,\mu\rangle}k_{\langle\alpha,\mu\rangle}
x^{\langle\alpha,\mu\rangle}$, there exists some $\langle\beta,\nu\rangle\in \mathbb{Z}_+^m\times \mathds{Z}_2^n(s,\textbf{r})$, $k_{\langle\beta,\nu\rangle}\neq 0$ such that $|\,\mathcal{E}\langle\beta,\nu\rangle\,|=\text{Edeg}(y)$. 
By Lemmas 12 (see its proof) \& 9 or Theorem 14, we can find $z\in\mathcal U_q$ such that $z.\, x^{\langle\beta,\nu\rangle}\neq 0$ (then $z.\, y\neq 0$), but $\text{ Edeg}\,(z.\, y)=\text{Edeg}\,(z.\, x^{\langle\beta,\nu\rangle})<\text{Edeg}\,(y)$, so we get a proper submodule $(0\neq)$ $\mathfrak{V}_{u.\, y}\subsetneq \mathfrak{V}_y$, by Lemma 12 or Theorem 14. It is a contradiction. So the assertion is true.

\text{\rm (2)} follows from conclusion \text{\rm (1)}, together with Lemmas 10-12. Since for those $\langle\alpha,\mu\rangle,\ \langle\beta,\nu\rangle$ $\in \mathbb{Z}_+^m\times \mathds{Z}_2^n(s,\textbf{r})$ with $|\,\mathcal{E}\langle\alpha,\mu\rangle\,|=|\,\mathcal{E}\langle\beta,\nu\rangle\,|=E_0(s)$, if $\langle\alpha,\mu\rangle\thicksim \langle\beta,\nu\rangle $, then $\mathfrak{V}_{\langle\alpha,\mu\rangle}=\mathfrak{V}_{\langle\beta,\nu\rangle}$ by Lemma 11; and if $\langle\alpha,\mu\rangle\nsim \langle\beta,\nu\rangle $, then by Theorem 14, $\mathfrak{V}_{\langle\alpha,\mu\rangle}\cap \mathfrak{V}_{\langle\beta,\nu\rangle}=0$.

\text{\rm (3)} For any monomial element $x^{\langle\alpha,\mu\rangle}\in \Omega_q^{(s)}(m|n,\textbf{r})$, namely, for any $\langle\alpha,\mu\rangle \in \mathbb{Z}_+^m\times \mathds{Z}_2^n(s,\textbf{r})$ with $|\,\langle\alpha,\mu\rangle\,|=s$. According to the partial order $\succeq$ defined in Subsection 3.3, we assert that there exists a $\langle\beta,\nu\rangle=\langle\beta,\mathbf 0\rangle \in \mathbb{Z}_+^m\times\mathds{Z}_2^n(s,\textbf{r})$ with $|\,\mathcal{E}\langle\beta,\nu\rangle\,|=E(s)$, such that $\langle\beta,\nu\rangle\succeq \langle\alpha,\mu\rangle$. 

Actually, this fact follows from the proof of Lemma 12. Since $s=\ell\cdot |\,\mathcal{E}\langle\alpha,\mu\rangle\,|+|\,\gamma\langle\alpha,\mu\rangle\,|=\ell E(s)+\hbar$ with $0\leq \hbar\le \ell{-}1$, if $|\,\mathcal{E}\langle\alpha,\mu\rangle\,|<E(s)$. Write $|\,\gamma\langle\alpha,\mu\rangle\,|=k\,\ell+h \ (0\leq h\leq \ell{-}1)$. Then by definition (of $E(s)$), we have $h=\hbar $ and $k=E(s)-|\,\mathcal{E}\langle\alpha,\mu\rangle\,|$.  Take $\underline{\kappa}=(k_1,\cdots, k_m)\in \mathbb Z_+^m$, such that $\sum_{i=1}^m k_i=k$ and $\mathcal{E}_i\langle\alpha,\mu\rangle+k_i\leq r-1$ for each $i\in I_0$. Construct 
$$
\langle \beta, \nu\rangle:=\langle\, (\mathcal E_1\langle\alpha,\mu\rangle+k_1)\,\ell+h, \,(\mathcal E_2\langle\alpha,\mu\rangle+k_2)\,\ell,\,\cdots,\,(\mathcal E_m\langle\alpha,\mu\rangle+k_m)\,\ell\,;\,\mathbf 0\,\rangle.
$$
Then $|\,\langle \beta,\mathbf 0\rangle\,|=\ell\cdot|\mathcal E\langle\alpha,\mu\rangle\,|+k\,\ell+h=s$, i.e., $\langle \beta,\mathbf 0\rangle\in\mathbb Z_+^m\times\mathds Z_2^n(s, \mathbf r)$. Obviously,  $|\,\mathcal{E}\langle\beta,\nu\rangle\,|=E(s)$,
$\mathcal E\langle\beta,\mathbf 0\rangle=\mathcal E\langle\alpha,\mu\rangle+\underline{\kappa}\ge \mathcal E\langle\alpha,\mu\rangle$, so $\langle\beta,\mathbf 0\rangle \succeq\langle\alpha,\mu\rangle$.

Again, from Lemma 12, together with its proof, there exists $z\in \mathcal U_q$ such that $z.\, x^{\langle\beta,\mathbf 0\rangle}=x^{\langle\alpha,\mu\rangle}$, that is, $x^{\langle\alpha,\mu\rangle}\in \mathfrak V_{\langle\beta,\mathbf 0\rangle}$. Hence, we arrive at the result as stated.
\end{proof}

\subsection{Indecomposability of $\mathcal U_q$-modules $\Omega_q^{(s)}(m|n,\textbf{r})$}
In this subsection, we will use Lemma 15 to prove that the $\mathcal U_q$-modules $\Omega_q^{(s)}(m|n,\textbf{r})$ for $s$ lying in different intervals are indecomposable.

We shall have to consider in the following theorem: the set $\wp_s(k)$ of equivalence classes of $\langle m,n\rangle$-tuples $\langle\alpha,\mu\rangle\in \mathbb{Z}_+^m\times \mathds{Z}_2^n(s,\textbf{r})$ satisfying  $s=|\,\langle\alpha,\mu\rangle \,|$ and $|\,\mathcal{E}\langle\alpha,\mu\rangle\,|=k$ with respect to the equivalence relation $\thicksim$ defined in Subsection 3.3.  We will consider the case $k=E_0(s)$, for $s = k\ell+(m-1)(\ell-1)+h+n$, with $0 \leq h\leq \ell-1$,  and will then denote it by $\wp_s(E_0(s))$. 

It is obvious that representatives $\eta\in\wp_s(E_0(s))$ can be constructed as follows: 

For a given
$k=E_0(s)$, let $\underline{\kappa}:=(k_1,\cdots,k_m)$, $(0\le k_i\le r-1)$ such that $\sum k_i=k$; for $s_h=h+n$ with $0 \leq h \leq \ell-1$, put
$\gamma_{s_h}=\langle\,\ell-1,\cdots, \ell-1, s_h\,;\,\mathbf 0\,\rangle$  if $s_h\le \ell-1$ (so 
$|\,\gamma_{s_h}\,|=(m-1)(\ell-1)+s_h$; otherwise, if $\ell\le s_h=(\ell-1)+\hslash$ $(1\le\hslash\le n)$, put $\gamma_{s_h}=\langle\,\ell-1,\cdots, \ell-1\,; \,\omega(\hslash)\,\rangle$. 

Now put $\eta(\underline{\kappa}):=\langle\,\ell\cdot\underline{\kappa}\,;\,\mathbf 0\,\rangle+\gamma_{s_h}$, then 
$\mathcal E\langle\eta(\underline{\kappa})\rangle=\underline{\kappa}$, $\gamma(\eta(\underline{\kappa}))=\gamma_{s_h}$, as well as $|\,\eta(\underline{\kappa})\,|=k\,\ell+(m-1)(\ell-1)+s_h=s$, i.e., $\eta(\underline{\kappa})\in\mathbb Z_+^m\times\mathds Z_2^n(s, \,\mathbf r)$.  
So, we will write, by abuse (choosing specific representatives) 
\begin{equation}
\wp_s(E_0(s))=\{\,\eta(\underline{\kappa})\in\mathbb Z_+^m\times\mathds Z_2^n(s, \,\mathbf r)\mid \underline{\kappa}=(k_1,\cdots,k_m)\in \mathbb{Z}_+^m, \ |\,\underline{\kappa}\,|=E_0(s), \ k_i\leq r{-}1\,\}.
\end{equation}

\begin{theorem}
For the $\mathcal U_q$-modules $\Omega_q^{(s)}(m|n,\textbf{r})$, with $0\leq s\leq N=m\,(r{-}1)\ell+m\,(\ell{-}1)+n$, then

\text{\rm (A)} \,When \,$s\in \bigl[\,0, \,\ell{-}1\,\bigr]\cup \bigl[\,N{-}(\ell{-}1),\,N\,\bigr]\,:$ \ $\Omega_q^{(s)}(m|n,\textbf{r})=\mathcal U_q.\,x^{\langle\eta\rangle}=\mathfrak{V}_\eta$ is simple. Moreover, $x^{\langle\,\eta\,\rangle}$ is the highest weight vector with $|\,\eta\,|=s$, 
where  $\tau=(r-1,\cdots,r-1)$, 
\begin{equation}
\eta=\begin{cases}\langle \,s,0,\cdots,0;\,\mathbf 0\,\rangle, & \text{\it if } \ s\in \bigl[\,0, \ell{-}1\,\bigr];\\
\langle\, \ell\cdot \tau;\,\mathbf 0\,\rangle +\gamma_{s_h}, & \text{\it if } \ s\in \bigl[\,N{-}(\ell{-}1),N\,\bigr],
\end{cases}
\end{equation}
where for $s=m\,(r{-}1)\ell{+}(m{-}1)(\ell{-}1){+}h{+}n\in \bigl[\,N{-}(\ell{-}1),N\,\bigr]$ with  $\,0\leq h\leq \ell{-}1:$  we set $s_h:=h{+}n$ and put:
\begin{equation}
\gamma_{s_h}=\begin{cases}\langle\, \ell{-}1,\cdots,\ell{-}1, s_h;\,\mathbf 0\,\rangle, & \text{\it if } \ 0\le s_h\leq \ell{-}1;\\
\langle\,\ell{-}1,\cdots,\ell{-}1,\ell{-}1; \,\omega(\hslash)\,\rangle, & \text{\it if } \ \ell\le s_h=(\ell{-}1){+}\hslash \ \,(1\le \hslash\le n),
\end{cases}
\end{equation}
where $\omega(\hslash)=\langle\,\underset{\hslash}{\underbrace{1,\cdots,1}},0,\cdots,0\,\rangle$.

\text{\rm (B)} \,When \,$s\in\bigl[\,\ell, \,m\,(\ell{-}1){+}n\,\bigr]:$ \, $\Omega_q^{(s)}(m|n,\textbf{r})$ is indecomposable, but nonsimple, for $s=\jmath\,(\ell{-}1)+h$ $(0\le h< \ell{-}1)$. In this case, the socle of the $\mathcal U_q$-module $\Omega_q^{(s)}(m|n,\textbf{r}):$
\begin{equation}
\emph{\text{Soc}}\,\Bigl(\Omega_q^{(s)}(m|n,\textbf{r})\,\Bigr)=\mathfrak{V}_\eta
\end{equation}
is simple. Moreover, $x^{\langle\,\eta\,\rangle}$ is the highest weight vector with $|\,\eta\,|=s$, 
where  $n=n'(\ell{-}1)+h'$ $(n'\ge 0,\ 0\le h'< \ell{-}1)$,
\begin{equation}
\eta=\begin{cases}\langle\,\underset{\jmath}{\underbrace{\ell{-}1,\cdots,\ell{-}1}},h,0,\cdots,0\,;\mathbf 0\,\rangle, & \text{\it if } \ 1\le \jmath\le m{-}1;\\
\langle\,\ell{-}1,\cdots,\ell{-}1;\, \omega(\hbar)\,\rangle, & \text{\it if } \ m\le \jmath\le m+n',
\end{cases}
\end{equation}
where $\hbar=s-m\,(\ell{-}1)=(\jmath{-}m)(\ell{-}1)+h$ $(0\le h< \ell{-}1)$.

\text{\rm (C)} \,When $s\in\bigl[\,m\,(\ell{-}1){+}1{+}n, \,N{-}\ell\,\bigr]:$  \,$\Omega_q^{(s)}(m|n,\textbf{r})$ is indecomposable, but nonsimple, for $s=\kappa\,\ell{+}(m{-}1)(\ell{-}1){+}h{+}n$ $(\,0\leq h\leq \ell{-}1$, $1\leq \kappa\leq m\,(r{-}1){-}1\,)$. In this case, the socle of the $\mathcal U_q$-module $\Omega_q^{(s)}(m|n,\textbf{r}):$
\begin{equation}
\emph{\text{Soc}}\,\Bigl(\Omega_q^{(s)}(m|n,\textbf{r})\,\Bigr)=\bigoplus_{\eta({\underline{\kappa}})\in \wp_s(E_0(s))}\mathfrak{V}_{\eta({\underline{\kappa}})}
\end{equation}
is semisimple rather than simple, where  $\wp_s(E_0(s))=\{\,\eta(\underline{\kappa})\in\mathbb Z_+^m\times\mathds Z_2^n(s, \,\mathbf r)\mid\mathcal E(\eta(\underline{\kappa}))= \underline{\kappa}=(k_1,\cdots,k_m)\in \mathbb{Z}_+^m, \ |\,\underline{\kappa}\,|=E_0(s), \ k_i\leq r{-}1\,\}$. Moreover, $x^{\langle\,\eta({\underline{\kappa}})\,\rangle}$'s for all $\underline{\kappa})$ as above are the respective highest weight vectors with  $|\,\eta({\underline{\kappa}})\,|=s$, and setting
   $s_h=h+n$,
\begin{equation}
\eta({\underline{\kappa}}):=\langle \,\ell\cdot {\underline{\kappa}}\,;\,\mathbf 0\,  \rangle+\gamma_{s_h}\in \mathbb{Z}_+^m\times \mathds{Z}_2^n(s,\textbf{r}),
\end{equation}
where
\begin{equation}
\gamma_{s_h}=\begin{cases}\langle\, \ell{-}1,\cdots,\ell{-}1, s_h;\,\mathbf 0\,\rangle, & \text{\it if } \ 0\le s_h\leq \ell{-}1;\\
\langle\,\ell{-}1,\cdots,\ell{-}1,\ell{-}1; \,\omega(\hslash)\,\rangle, & \text{\it if } \ \ell\le s_h=(\ell{-}1){+}\hslash \ \,(1\le \hslash\le n).
\end{cases}
\end{equation}
\end{theorem}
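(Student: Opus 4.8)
The plan is to split according to the intervals $[0,\ell-1]\cup[N-(\ell-1),N]$, $[\ell,m(\ell-1)+n]$, $[m(\ell-1)+1+n,N-\ell]$ and, in each, to feed the energy-grade arithmetic of Lemma~15 into the socle description of Theorem~16 and the stratification of Theorem~14. The one structural ingredient common to all cases is that, for every admissible $\mathcal E$-vector $\lambda$ (i.e. $\lambda\in\mathbb Z_+^m$ with $\lambda_i\le r-1$ realized as $\mathcal E\langle\alpha,\mu\rangle$ of some $x^{\langle\alpha,\mu\rangle}\in\Omega_q^{(s)}(m|n,\mathbf r)$), the cyclic submodule $\mathfrak V_{[\lambda]}:=\mathcal U_q.\,x^{\langle\eta_\lambda\rangle}$ generated by the left-packed representative of that equivalence class is \emph{indecomposable with simple head}. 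I would prove this by first checking, with the action formulas of Proposition~4, that $x^{\langle\eta_\lambda\rangle}$ is a highest weight vector (every $e_i$ kills it because the $\mu$-part is packed to the right, the $\alpha$-part to the left, and the relevant $[\,\cdot\,]$-coefficients vanish); then, using Lemma~9, $\mathfrak V_{[\lambda]}$ is the span of the monomials with $\mathcal E$-vector $\succeq$-below $\lambda$, its top energy layer is a single equivalence class generating it, and that layer is a simple module by Theorem~5. Hence the energy filtration exhibits the span of the elements of $\mathfrak V_{[\lambda]}$ of energy grade $<|\lambda|$ as the unique maximal submodule of $\mathfrak V_{[\lambda]}$, so $\mathrm{End}_{\mathcal U_q}(\mathfrak V_{[\lambda]})$ has only the idempotents $0$ and $1$.

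For case (A): Lemma~15(1),(4) give $E_0(s)=E(s)$, so by Theorem~16(2),(3) the module equals its socle, which consists of the single equivalence class $\mathcal E\equiv\mathbf 0$ (resp. $\mathcal E\equiv(r-1,\dots,r-1)$); thus $\Omega_q^{(s)}(m|n,\mathbf r)=\mathfrak V_\eta$ is simple, and a direct computation with Proposition~4 confirms that the displayed $\eta$ is a highest weight vector. For case (B): Lemma~15(2) gives $E_0(s)=0<1\le E(s)$; by Theorem~16(2) the socle is spanned by the monomials of energy grade $0$, all in the class $\mathcal E\equiv\mathbf 0$, hence it equals $\mathfrak V_\eta$ and is simple, so $\Omega_q^{(s)}(m|n,\mathbf r)$ is indecomposable, while the presence of monomials of energy grade $E(s)\ge1$ gives $\mathrm{Soc}\subsetneq\Omega_q^{(s)}(m|n,\mathbf r)$, i.e. non-simplicity; once more the displayed $\eta$ is checked to be a highest weight vector.

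Case (C) is the substantive one. Here Lemma~15(3) gives $E_0(s)=\kappa\ge1$ and $\kappa+1\le E(s)\le m(r-1)$. By Theorem~16(2) the socle is the span of all monomials of energy grade $\kappa$; grouping by $\mathcal E$-vector identifies $\wp_s(E_0(s))$ with the set of $\underline\kappa=(k_1,\dots,k_m)$, $\sum k_i=\kappa$, $0\le k_i\le r-1$, the representative of the class $\underline\kappa$ being exactly $\eta(\underline\kappa)=\langle\ell\underline\kappa;\mathbf 0\rangle+\gamma_{s_h}$ as displayed (each such $\underline\kappa$ is realized since $s-\ell\kappa=(m-1)(\ell-1)+h+n$ can be spread over $\gamma_0\le\mathbf 1$ and $\mu$ with $\ell k_i+(\gamma_{s_h})_i\le r\ell-1$). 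Distinct classes are $\nsim$ with equal $|\mathcal E|$, so Theorem~14(iii) makes $\mathrm{Soc}=\bigoplus_{\underline\kappa}\mathfrak V_{\eta(\underline\kappa)}$ a direct sum, semisimple by Theorem~16(1), and genuinely non-simple because $m\ge2$ and $1\le\kappa\le m(r-1)-1$ force $|\wp_s(E_0(s))|\ge2$; the $\eta(\underline\kappa)$ are highest weight vectors by the check above. For indecomposability I would run the net-like weave-lifting argument: take an idempotent $e\in\mathrm{End}_{\mathcal U_q}(\Omega_q^{(s)}(m|n,\mathbf r))$. For each highest-energy class $\lambda$ ($|\lambda|=E(s)$), the indecomposability and simple head of $\mathfrak V_{[\lambda]}$ force $e|_{\mathfrak V_{[\lambda]}}=\tau_\lambda\cdot\mathrm{id}$ with $\tau_\lambda\in\{0,1\}$, hence $e$ acts by $\tau_\lambda$ on every $\mathfrak V_{\eta(\underline\kappa)}\subseteq\mathfrak V_{[\lambda]}$, i.e. on every $\lambda\succeq\underline\kappa$. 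The lifting step is: for adjacent socle classes $\underline\kappa,\underline\kappa'$ (differing by one unit move) one has $\sum_i\max(k_i,k'_i)=\kappa+1\le E(s)\le m(r-1)$, so there is a highest-energy class $\lambda$ with $\lambda\succeq\underline\kappa$ and $\lambda\succeq\underline\kappa'$ realized by a degree-$s$ monomial; hence $e$ acts by the same scalar on $\mathfrak V_{\eta(\underline\kappa)}$ and $\mathfrak V_{\eta(\underline\kappa')}$. Since the lattice points $\underline\kappa$ are connected under unit moves, $e$ acts by one scalar $\iota\in\{0,1\}$ on the whole socle, so $\tau_\lambda=\iota$ for all highest-energy $\lambda$, so $e=\iota\cdot\mathrm{id}$ on $\sum_\lambda\mathfrak V_{[\lambda]}=\Omega_q^{(s)}(m|n,\mathbf r)$ by Theorem~16(3); thus $\mathrm{End}$ has no nontrivial idempotents and the module is indecomposable.

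The main obstacle is exactly this last argument for case (C). One must (i) establish carefully that each $\mathfrak V_{[\lambda]}$ is indecomposable with simple head --- the point where the energy filtration, the restricted-case simplicity of Theorem~5, and the $q$-binomial arithmetic of Lemma~1 (to guarantee the structure constants in the successive $e_i/f_i$ actions do not vanish) are all needed; and (ii) carry out the weave-lifting bookkeeping that realizes, for every adjacent pair of socle classes, a common highest-energy class dominating both, which leans on the precise bounds $E_0(s)+1\le E(s)\le m(r-1)$ of Lemma~15(3). A point that must be handled with care is that the socle summands $\mathfrak V_{\eta(\underline\kappa)}$ need not be pairwise non-isomorphic (the $K_i$ only detect $\alpha_i$ modulo $\ell$), so one cannot diagonalize $e$ on the socle by Schur's lemma; the argument is therefore arranged to use only the individual indecomposability of the $\mathfrak V_{[\lambda]}$'s together with the combinatorial connectivity of the inclusion net, which is precisely what the net-like weave-lifting method is designed to supply.
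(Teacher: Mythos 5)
For parts (A) and (B), and for the identification and semisimplicity of the socle in part (C), your proposal follows essentially the same route as the paper: Lemma~15 pins down $E_0(s)$ and $E(s)$, Theorem~16(1)--(2) identifies the socle with the span of the lowest-energy monomials, Lemma~11/Theorem~14 sorts these into equivalence classes, and a direct check with Proposition~4 shows the displayed $\eta$'s are highest weight vectors; in (A) and (B) indecomposability follows because the socle is simple. This is all correct and matches the paper (whose own proof of Theorem~17 goes no further: for (C) it merely observes $\mathrm{Soc}\subsetneq\Omega_q^{(s)}(m|n,\mathbf r)$ and leaves the real indecomposability argument to the net-like weave-lifting of \S4.4 and the Loewy filtration of Theorem~23). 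You correctly recognize that a proper, non-simple socle does not by itself give indecomposability, and you try to supply the missing argument inside the theorem.

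However, your indecomposability argument for case (C) has a genuine gap at the step ``the indecomposability and simple head of $\mathfrak V_{[\lambda]}$ force $e|_{\mathfrak V_{[\lambda]}}=\tau_\lambda\cdot\mathrm{id}$.'' An idempotent $e\in\mathrm{End}_{\mathcal U_q}\bigl(\Omega_q^{(s)}(m|n,\mathbf r)\bigr)$ need not map the cyclic submodule $\mathfrak V_{[\lambda]}=\mathcal U_q.\,x^{\langle\eta_\lambda\rangle}$ into itself, so it does not restrict to an endomorphism of $\mathfrak V_{[\lambda]}$ and Fitting's lemma cannot be applied there. Concretely, $e$ preserves weight spaces, but the $K_i$-eigenvalues only determine $\alpha_i$ modulo $\ell$ (together with $s$ they fix the total energy grade, not the component vector $\mathcal E$): for example with $m=2$, $\ell=3$, $r=2$, $s=5$, the monomials $x^{(5,0)}$ and $x^{(2,3)}$ have the same weight but $\mathcal E=(1,0)$ versus $(0,1)$, so $e\bigl(x^{\langle\eta_\lambda\rangle}\bigr)$ may have components in other top classes $\mathfrak V_{[\lambda']}$, and your scalars $\tau_\lambda$ are not defined. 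This is exactly the isomorphic-multiplicity subtlety you flag at the end, but your workaround does not evade it, because it re-enters one level up (at the top Loewy layer) rather than at the socle. To repair the argument one has to work with the canonical (hence $e$-stable) energy/Loewy filtration $\mathcal V_i$ of Definition~22 and argue layer by layer using the inclusion net of Proposition~20, which is what \S4.4 and Theorem~23 of the paper are designed to provide; a secondary, more easily fixed slip is your claim that every $e_i$ kills $x^{\langle\eta_\lambda\rangle}$ for an arbitrary class $\lambda$ --- for $\gamma_0(\eta_\lambda)=(\ell{-}1,\dots,\ell{-}1,s_j,0,\dots,0)$ with $j<m$ the operators $e_t$, $j\le t<m$, act nontrivially and only land in the lower filtration piece, so these generators are primitive relative to $\mathcal V_{i-1}$ rather than genuine highest weight vectors.
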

\begin{proof}
\text{\rm (A)} In these two extreme cases: $s\in \bigl[\,0, \,\ell{-}1\,\bigr]\cup \bigl[\,N{-}(\ell{-}1),N\,\bigr]$, by Lemma 15 (1) \& (4), we have $E_0(s)=E(s)$. Note that the generating sets of \text{\rm (A)} in both cases only contain one equivalence class with respect to the equivalent relation $\thicksim$ defined in Subsection 3.3. Thus, Theorem 16 \text{\rm (2)} and \text{\rm (3)} give us the desired result below:
$$
\Omega_q^{(s)}(m|n,\textbf{r})=\text{Soc}\ \Bigl(\Omega_q^{(s)}(m|n,\textbf{r})\Bigr)=\mathcal U_q.\,x^{\langle\eta\rangle}=\mathfrak{V}_\eta
$$
is simple, here $\eta=\langle\, s,0,\cdots,0;\,\mathbf 0\,\rangle$ for $0\leq s\leq \ell{-}1$; $\eta:=\langle \,\ell\cdot \tau;\,\mathbf 0\,\rangle +\gamma_{s_h}$ as defined above for $N-(\ell{-}1)\leq s=m\,(r{-}1)\ell+(m{-}1)(\ell{-}1)+h+n\leq N$ with $0\leq h\leq \ell{-}1$. $x^{\langle\,\eta\,\rangle}$ is the relevant highest weight vector, i.e., $e_i.\,x^{\langle\,\eta\,\rangle}=0$ for all $i\in I$ (by Propsition 4, or Theorem 5 (2)).

\text{\rm (B)} For $s\in \bigl[\,\ell,\, m\,(\ell{-}1){+}n\,\bigr]$, by Lemma 15 \text{\rm (2)}, we have $0=E_0(s)< E(s)$. Consequently, Theorem 16 \text{\rm (2)} and \text{\rm (3)} give rise to 
$$
\text{Soc}\ \Bigl(\Omega_q^{(s)}(m|n,\textbf{r})\Bigr)\subsetneq \Omega_q^{(s)}(m|n,\textbf{r}).
$$
So, $\Omega_q^{(s)}(m|n,\textbf{r})$ is indecomposable.

Clearly, by definition, those $\langle m, n\rangle$-tuples $\langle\alpha,\mu\rangle\in \mathbb{Z}_+^m\times \mathbb{Z}_2^n(s,\textbf{r})$ with $|\,\mathcal{E}\langle\alpha,\mu\rangle\,|=E_0(s)=0$ are equivalent to each other with respect to $\thicksim$. Write $s=j\,(\ell{-}1)+h$ with $0\leq h<\ell{-}1$, $j\ge 1$.

If $j\leq m{-}1$, then $\eta=\langle\, \underset{j}{\underbrace{\ell{-}1,\cdots,\ell{-}1}},h,0,\cdots,0\,;\,\mathbf 0\,\rangle\in \mathbb{Z}_+^m\times \mathds{Z}_2^n(s,\textbf{r})$ is one representative of this equivalence class. While, for 
$j\geq m$, write $\hbar=(j{-}m)(\ell{-}1)+h$, $\eta=\langle\, \underset{m}{\underbrace{\ell{-}1,\cdots,\ell{-}1}};\,\omega(\hbar)\,\rangle$ is one representative of this equivalence class, here $|\,\eta\,|=m\,(\ell{-}1)+\hbar=s$, still $\eta\in \mathbb{Z}_+^m\times \mathds{Z}_2^n(s,\textbf{r})$.

Hence, $\text{Soc}\,\Bigl(\Omega_q^{(s)}(m|n,\textbf{r})\Bigr)=\mathfrak{V}_\eta$ is simple, and $x^{\langle\,\eta\,\rangle}$ is the highest weight vector. 

\smallskip
\text{\rm (C)} For $s\in\bigl[\,m\,(\ell{-}1){+}1{+}n, \, N{-}\ell\,\bigr]$, write $s=k\,\ell+(m{-}1)(\ell{-}1)+h+n$, and $s_h=h+n$, with $0\leq h\leq \ell{-}1$, $1\leq k\leq m\,(r{-}1)-1$. Then by Lemma 15 (3), $E_0(s)=k$, and $k+1\leq E(s)\leq m\,(r{-}1)$ under the assumption $m\ge 2$. So, Theorem 16 (2) \& (3) implies that 
$$
\text{Soc}\ \Bigl(\Omega_q^{(s)}(m|n,\textbf{r})\Bigr)\subsetneq \Omega_q^{(s)}(m|n,\textbf{r}).
$$
This means that $\Omega_q^{(s)}(m|n,\textbf{r})$ is indecomposable.

Now consider the set  $\wp_s(E_0(s))$ and its representatives $\eta\in\wp_s(E_0(s))$ constructed above: 

\begin{equation}
\wp_s(E_0(s))=\{\,\eta(\underline{\kappa})\in\mathbb Z_+^m\times\mathds Z_2^n(s, \,\mathbf r)\mid \underline{\kappa}=(k_1,\cdots,k_m)\in \mathbb{Z}_+^m, \ |\,\underline{\kappa}\,|=E_0(s), \ k_i\leq r{-}1\,\}.
\end{equation}

According to Lemma 11 and Theorem 16 \text{\rm (1)}, we see that each $\mathfrak{V}_{\eta(\underline{\kappa})}$ is a simple submodule, and  $x^{\langle\,\eta(\underline{\kappa})\,\rangle}$ is the highest weight vector (i.e., $e_i.\,x^{\langle\,\eta(\underline{\kappa})\,\rangle}=0$ for all $i\in I$, by Proposition 4 and the definition of $\eta(\underline{\kappa})$\,) of $\mathfrak{V}_{\eta(\underline{\kappa})}$. As the $\langle m,n\rangle$-tuples in $\wp_s(E_0(s))$ are not equivalent with each other with respect to $\thicksim$, Theorem 14 (i) means that $\emph{\text{Soc}}\ \Bigl(\Omega_q^{(s)}(m|n,\textbf{r})\Bigr)=\bigoplus_{\eta\in \wp_s(E_0(s))}\mathfrak{V}_\eta$ is semisimple but non-simple.

This completes the proof.
\end{proof}

\begin{remark}
Write $\mathcal K(\kappa)=\{\,\underline{\kappa}=(k_1,\cdots,k_m)\in\mathbb Z_+^m\mid \sum_i k_i=\kappa, \ k_i\le r{-}1\,\}$. So, for any $\eta(\underline{\kappa})\in\wp_s(E_0(s))$, we have
$\mathcal E(\eta(\underline{\kappa}))\in \mathcal K(E_0(s))$. In particular, when $E_0(s)=0$, $\mathcal K(0)=\{\,\underline{\mathbf 0}\,\}$. 
So, the generator sets given in $(4.6)\ \& \ (4.8)$ of the socles of $\mathcal U_q$-module $\Omega_q^{(s)}(m|n,\textbf{r})$ for different $s$ in Theorem 17 $(B)$ $\&$ $(C)$ can be uniformly written as 
\begin{equation}
\begin{split}
\wp_s(\mathcal K(E_0(s)))&=\{ \langle\,\gamma_\alpha, \mu\,\rangle:=\langle\,\ell\cdot \underline{\kappa}\,;\,\mathbf 0\,\rangle+\gamma_{s_h}\in\mathbb Z_+^m\times\mathds Z_2^n(s, \mathbf r)\mid \mathcal E\langle\,\gamma_\alpha, \mu\,\rangle=\underline{\kappa}\in \mathcal K(E_0(s))\,\},\\
\gamma_{s_h}&=\begin{cases}\langle\,\underset{\jmath}{\underbrace{\ell{-}1,\cdots,\ell{-}1}},h,0,\cdots,0\,;\mathbf 0\,\rangle, & \text{\it if } \ 1\le \jmath\le m{-}1,\ 0\le h\le \ell{-}1;\\
\langle\,\ell{-}1,\cdots,\ell{-}1,\ell{-}1; \,\omega(\hslash)\,\rangle, & \text{\it if } \ 1\le \hslash\le n.
\end{cases}
\end{split}
\end{equation}
Using the notation in (4.2), we have $\gamma_1(\langle\,\gamma_\alpha, \mu\,\rangle)=\omega(\hslash)$, here we make a convention: $\omega(\hslash)=\mathbf 0$ when $\hslash=0$. Afterwards, we also set $\gamma(\eta(\underline{\kappa}))=\gamma_{s_h}$ if instead write $\eta(\underline{\kappa})$ for $\langle\,\gamma_\alpha, \mu\,\rangle\in \wp_s(\mathcal K(E_0(s)))$.
\end{remark}

\subsection{Weaving a net from the socle of $\Omega_q^{(s)}(m|n,\textbf{r})$ with $s\in[\,\ell, N{-}\ell\,]$}
In accordance with the notation in Remark 18, we can write 
\begin{equation}
\wp_s(\mathcal K(\kappa))=\{\,\eta(\underline{\kappa})=\langle\gamma_\alpha,\mu\rangle\in\mathbb Z_+^m\times\mathds Z_2^n(s, \mathbf r)\mid \mathcal E\langle\gamma_\alpha,\mu\rangle=\mathcal E(\eta(\underline{\kappa}))=\underline{\kappa}\in\mathcal K(\kappa)\,\}
\end{equation}
for a generator subsystem consisting of representatives of energy grade $\kappa$ in $\Omega_q^{(s)}(m|n,\textbf{r})$.

The set of $m$-tuples of energy grade $\kappa$: $\mathcal K(\kappa)$ introduced in Remark 18  has two orders as follows. One is the partial order $\succeq$ in Subsection 3.3, which plays a crucial role in Lemma 12 or Theorem 14 (iii) to judge the subordinate relationship of a proper submodule in a bigger one. Another is the lexicographical order $\succ$ which is a total order among $m$-tuples of non-negative integers. Note that we have: 
\begin{equation}
    \begin{split}
\cdots\succ\underline{\kappa}&\succ\underline{\kappa}{-}\varepsilon_{m-1}{+}\varepsilon_m\quad \hskip3.3cm(\text{$m{-}1$-st line occurs if $k_{m-1}>0$})\\
&\succ\cdots\\ 
&\succ\underline{\kappa}{-}\varepsilon_j{+}\varepsilon_{j+1}\succ\cdots\succ\underline{\kappa}{-}\varepsilon_j{+}\varepsilon_m\qquad\quad(\text{$j$-th line occurs if $k_j>0$})\\
        &\cdots\\
  &\succ\underline{\kappa}{-}\varepsilon_i{+}\varepsilon_{i+1}\succ\cdots\succ\underline{\kappa}{-}\varepsilon_i{+}\varepsilon_m\qquad\quad \,\,(\text{$i$-th line occurs if $k_i>0$})\\      
   &\succ\cdots\\
   &\succ\underline{\kappa}{-}\varepsilon_1{+}\varepsilon_2\succ\cdots\succ\underline{\kappa}{-}\varepsilon_1{+}\varepsilon_m\succ \cdots.\quad (\text{$1$-st line occurs if $k_1>0$})    \end{split}
\end{equation}

Furthermore, $\mathcal K(\kappa)$ also inherits a third order $\succcurlyeq$, coming from the weight order relative to the system of simple positive roots  $\Delta=\{\,\alpha_i=\varepsilon_i-\varepsilon_{i+1}\mid 1\le i<m\,\}$ of type $A$, i.e., $\kappa\succcurlyeq\kappa'$ if and only if $\kappa-\kappa'\in Q^+$ (the  semi-lattice of positive roots w.r.t. $\Delta$). This is a partial order compatible with $e_i$- and $f_i$-actions for any $e_i, f_i\in \mathcal U_q(\mathfrak{sl}_m)\subset \mathcal U_q(\mathfrak{gl}(m|n))$. 

Clearly, the lexicographic order $\succ$ on each line of figure (4.13)
exactly coincides with the pre-order $\succcurlyeq$ given by the type-A weight system, namely, 
$\underline{\kappa}-\varepsilon_i+\varepsilon_j\succ\underline{\kappa}-\varepsilon_i+\varepsilon_k$ for $i<j<k$ implies  $\underline{\kappa}-\varepsilon_i+\varepsilon_j\succcurlyeq\underline{\kappa}-\varepsilon_i+\varepsilon_k$. 

However, for any two $m$-tuples $\underline{\kappa}' \succ \underline{\kappa}\in \mathcal K(\kappa)$ of energy grade $\kappa$ lying in different two lines ($i<j$) of some $\underline{\kappa}''\in\mathcal K(\kappa)$, as shown in (4.13)\,: 
$$
\underline{\kappa}=\underline{\kappa}''-\varepsilon_i+\varepsilon_p, \ (i<p), \qquad \underline{\kappa}'=\underline{\kappa}''-\varepsilon_j+\varepsilon_q,  \ (j<q). 
$$

(1) When $q\le p$, 
$\underline{\kappa}'-\underline{\kappa}=(\varepsilon_i-\varepsilon_p)-(\varepsilon_j-\varepsilon_q)\in Q^+$, so, $\underline{\kappa}'\succcurlyeq\underline{\kappa}$. 

(2) When $q>p$, $\underline{\kappa}'\not\succcurlyeq\underline{\kappa}$ and $\underline{\kappa}\not\succcurlyeq\underline{\kappa}'$. But in this case, we have some $\underline{\kappa}''\in\mathcal K(\kappa)$, such that $\underline{\kappa}''\succcurlyeq\underline{\kappa}$ and $\underline{\kappa}''\succcurlyeq\underline{\kappa}'$. 

Thereby, we arrive at the following
\begin{lemma} 
For any two $m$-tuples $\kappa$, $\kappa'$ in the totally ordered set $(\mathcal K(\kappa), \succ)$ with $\kappa'\succ\kappa$, then either they are compatible w.r.t. $\succcurlyeq$: $\kappa'\succcurlyeq\kappa$; or $\kappa'\not\succcurlyeq\kappa$ and $\kappa\not\succcurlyeq\kappa'$, but there exists a $\kappa''\in \mathcal K(\kappa)$, such that
$\kappa''\succcurlyeq\kappa'$, and $\kappa''\succcurlyeq\kappa$. That is, $(\mathcal K(\kappa), \succcurlyeq)$ is a preorder set.
\end{lemma}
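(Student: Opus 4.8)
The statement is purely combinatorial about $m$-tuples of non-negative integers of fixed sum $\kappa$ with all entries bounded by $r-1$, equipped with three orders: the lexicographic total order $\succ$, and the type-$A$ weight order $\succcurlyeq$ (difference lies in $Q^+=\mathbb Z_{\ge 0}\langle\varepsilon_i-\varepsilon_{i+1}\rangle$). The plan is to reduce to the local picture in figure (4.13) and then do a short case analysis on how two tuples sit relative to a common ``parent'' tuple. First I would make precise the elementary observation that if $\underline{\kappa}'\succ\underline{\kappa}$ in $(\mathcal K(\kappa),\succ)$, then — reading off the leftmost coordinate where they differ — one can always locate a $\underline{\kappa}''\in\mathcal K(\kappa)$ together with indices so that $\underline{\kappa}$ and $\underline{\kappa}'$ each arise from $\underline{\kappa}''$ by a single elementary move $-\varepsilon_i+\varepsilon_p$ (resp. $-\varepsilon_j+\varepsilon_q$), i.e. they lie on the $i$-th and $j$-th ``lines'' emanating from $\underline{\kappa}''$ as displayed in (4.13). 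The key numerical point here is that, because $\succ$ is lexicographic and sums are equal, $\underline{\kappa}'\succ\underline{\kappa}$ forces $i\le j$, and the column/row bookkeeping keeps all entries within $[0,r-1]$, so $\underline{\kappa}''$ genuinely lies in $\mathcal K(\kappa)$.

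Next I would run the two-case dichotomy exactly as flagged in the text preceding the lemma. Write $\underline{\kappa}=\underline{\kappa}''-\varepsilon_i+\varepsilon_p$ with $i<p$ and $\underline{\kappa}'=\underline{\kappa}''-\varepsilon_j+\varepsilon_q$ with $j<q$. In the case $q\le p$ I would simply compute
$$
\underline{\kappa}'-\underline{\kappa}=(\varepsilon_i-\varepsilon_j)+(\varepsilon_q-\varepsilon_p),
$$
and observe that $\varepsilon_i-\varepsilon_j=\sum_{i\le t<j}(\varepsilon_t-\varepsilon_{t+1})\in Q^+$ and $\varepsilon_q-\varepsilon_p=\sum_{q\le t<p}(\varepsilon_t-\varepsilon_{t+1})\in Q^+$ whenever $i\le j$ and $q\le p$; hence $\underline{\kappa}'-\underline{\kappa}\in Q^+$, i.e. $\underline{\kappa}'\succcurlyeq\underline{\kappa}$. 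This gives the ``compatible'' alternative. In the complementary case $q>p$ I would check that neither $\underline{\kappa}'-\underline{\kappa}$ nor $\underline{\kappa}-\underline{\kappa}'$ can lie in $Q^+$: a nonzero element of $Q^+$ has its first nonzero partial sum $\sum_{t\le k}(\cdot)_t$ strictly positive, and one verifies by inspecting the supports $\{i,j,p,q\}$ (using $i\le j<q$ and $i<p<q$) that the partial sums of $\underline{\kappa}'-\underline{\kappa}$ change sign, so it is incomparable either way. The ``but'' clause is then immediate with the very $\underline{\kappa}''$ already constructed: $\underline{\kappa}''-\underline{\kappa}=\varepsilon_i-\varepsilon_p\in Q^+$ and $\underline{\kappa}''-\underline{\kappa}'=\varepsilon_j-\varepsilon_q\in Q^+$, so $\underline{\kappa}''\succcurlyeq\underline{\kappa}$ and $\underline{\kappa}''\succcurlyeq\underline{\kappa}'$.

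Finally I would assemble these into the stated trichotomy-with-upper-bound and note that the existence of a common $\succcurlyeq$-upper bound in $\mathcal K(\kappa)$ for every pair is exactly the assertion that $(\mathcal K(\kappa),\succcurlyeq)$ is a preorder(ed) directed set in the sense used in the paper. The main obstacle I anticipate is not any single case but the bookkeeping that produces the common parent $\underline{\kappa}''$ cleanly from two arbitrary $\succ$-comparable tuples — one has to argue that reading the first differing coordinate really does exhibit both tuples as single elementary descents from one element of $\mathcal K(\kappa)$, and in particular that the ceiling constraint $k_i\le r-1$ is preserved (so that $\underline{\kappa}''$ does not escape $\mathcal K(\kappa)$); once that normal form is in place, the $Q^+$-membership computations are routine. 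It may be cleanest to phrase this reduction as: among all $\underline{\kappa}''\in\mathcal K(\kappa)$ with $\underline{\kappa}''\succcurlyeq\underline{\kappa}$, pick the $\succ$-largest, and show it also dominates $\underline{\kappa}'$ or equals a tuple admitting the required elementary moves — but the direct ``first-difference'' argument above should suffice.
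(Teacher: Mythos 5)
Your two-case computation around a common parent $\underline{\kappa}''$ --- namely $\underline{\kappa}'-\underline{\kappa}=(\varepsilon_i-\varepsilon_j)+(\varepsilon_q-\varepsilon_p)\in Q^+$ when $q\le p$, and taking $\underline{\kappa}''$ itself as the common $\succcurlyeq$-upper bound when $q>p$ --- is exactly the argument the paper gives in the discussion preceding the lemma, so on that portion you and the paper coincide.

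The genuine gap is the reduction on which you rest everything: it is \emph{not} true that any two $\succ$-comparable tuples in $\mathcal K(\kappa)$ arise as single elementary moves $-\varepsilon_i+\varepsilon_p$ and $-\varepsilon_j+\varepsilon_q$ from one common $\underline{\kappa}''\in\mathcal K(\kappa)$. For instance, with $m=4$, $r=3$, $\kappa=4$, take $\underline{\kappa}=(0,2,0,2)$ and $\underline{\kappa}'=(1,0,2,1)$: these are lexicographically comparable and $\succcurlyeq$-incomparable (the partial sums of $\underline{\kappa}'-\underline{\kappa}=(1,-2,2,-1)$ are $1,-1,1,0$, which change sign), yet the coordinate absolute values of $\underline{\kappa}'-\underline{\kappa}$ sum to $6$, whereas a difference of two elementary moves $(\varepsilon_i-\varepsilon_p)-(\varepsilon_j-\varepsilon_q)$ can have absolute values summing to at most $4$; so no common parent of the required form exists, and no ``first-difference'' normalization can produce one. (To be fair, the paper's own discussion only treats pairs ``lying in two different lines of some $\underline{\kappa}''$'' and then asserts the lemma for arbitrary pairs, so it shares this lacuna.) The statement itself is rescued without any normal form: the lexicographically largest element $\underline{\kappa}_{\max}=(r{-}1,\dots,r{-}1,\rho,0,\dots,0)$ of $\mathcal K(\kappa)$, where $\kappa=d(r{-}1)+\rho$ with $0\le\rho<r{-}1$, satisfies $\sum_{t\le s}(\underline{\kappa}_{\max})_t=\min\{s(r{-}1),\kappa\}\ge\sum_{t\le s}k_t$ for every $\underline{\kappa}=(k_1,\dots,k_m)\in\mathcal K(\kappa)$ and every $s$, hence $\underline{\kappa}_{\max}\succcurlyeq\underline{\kappa}$ for all $\underline{\kappa}$, and the ``there exists $\kappa''$'' clause (directedness) is immediate. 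You should either prove the lemma this way, or restrict the elementary-move analysis to the pairs for which it is actually invoked later (the ``successive'' tuples of Proposition 20) and verify the common-parent normal form for those pairs only.
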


For  $\eta(\underline{\kappa})\in \wp_s(\mathcal K(\kappa))$ with $s\in[\,\ell, N{-}\ell\,]$ and $\gamma_0(\eta(\underline{\kappa}))=
(\underset{\jmath}{\underbrace{\ell{-}1,\cdots,\ell{-}1}}, h, 0,\cdots,0)$ satisfying $1\le \jmath\le m-1$, $0\le h\le\ell-1$, and $h\ge 1$ only if $\jmath=1$ (since $s\ge \ell$). 

Let us consider a basic case: $(\underline{\kappa}+\varepsilon_j-\varepsilon_q, \underline{\kappa})$ in $\mathcal K(\kappa)$ with preorder $\underline{\kappa}+\varepsilon_j-\varepsilon_q\succcurlyeq \underline{\kappa}$ ($j<q$).
Note that to $\underline{\kappa}+\varepsilon_j\in\mathcal K(\kappa+1)$ for $\underline{\kappa}\in\mathcal K(\kappa)$ with $\underline{\kappa}+\varepsilon_j\succeq\underline{\kappa}+\varepsilon_j-\varepsilon_q$ and
$\underline{\kappa}+\varepsilon_j\succeq\underline{\kappa}$,  we can associate two equivalent 
$\langle m, n\rangle$-tuples: $\eta_1(\underline{\kappa}{+}\varepsilon_j)\thicksim\eta_2(\underline{\kappa}{+}\varepsilon_j)$ in $\mathbb Z_+^m\times\mathds Z_2^n(s, \mathbf r)$ so that $\mathfrak V_{\eta_1(\underline{\kappa}{+}\varepsilon_j)}=\mathfrak V_{\eta_2(\underline{\kappa}{+}\varepsilon_j)}$ (by Theorem 14 (ii)) 
as follows.  

Case (i): If $\jmath\ge 2, \ j<q\le\jmath:$
\begin{equation}
    \begin{split}
 \eta_1(\underline{\kappa}{+}\varepsilon_j)&=\langle\, \cdots, (k_j{+}1)\ell,\cdots\cdots\cdots\cdots,k_q\ell+(\ell{-}2),\cdots,  k_{\jmath+1}\ell+h,\cdots;\,\omega(\hslash)\,\rangle,   \\
 \eta_2(\underline{\kappa}{+}\varepsilon_j)&=\langle\, \cdots,\,(k_j {+}1)\ell+(\ell{-}2),\, \cdots,\,k_q\ell,\, \cdots\cdot\cdots\cdots,\,k_{\jmath+1}\ell+h,\cdots\,;\,\omega(\hslash)\,\rangle.
    \end{split}
\end{equation}
 Using the action formulae (1) \& (2) in (\cite{Hu}, 4.5) and Proposition 4.6 \cite{Hu}, $\exists$ $e_{\alpha_{jq}}$, $f_{\alpha_{jq}}\in\mathcal U_q(\mathfrak{sl}_m)\subset\mathcal U_q(\mathfrak{gl}(m|n))$, such that $f_{\alpha_{jq}}.\,x^{\langle \eta_1(\underline{\kappa}+\varepsilon_j)\rangle}=c_1\,x^{\langle \eta(\underline{\kappa})\rangle}$, and $e_{\alpha_{jq}}.\,x^{\langle \eta_2(\underline{\kappa}+\varepsilon_j)\rangle}=c_2\,x^{\langle \eta(\underline{\kappa}+\varepsilon_j-\varepsilon_q)\rangle}$, $(c_1, \,c_2$ $\in \mathbb K^*)$.
That is, $\mathfrak V_{\eta(\underline{\kappa})}\bigoplus \mathfrak V_{\eta(\underline{\kappa}+\varepsilon_j-\varepsilon_q)}\subsetneq \mathfrak V_{\eta_1(\underline{\kappa}+\varepsilon_j)}=\mathfrak V_{\eta_2(\underline{\kappa}{+}\varepsilon_j)}$.

Case (ii): If $\jmath\ge 2, \ \jmath{+}1<j<q:$
\begin{equation}
    \begin{split}
 \eta_1(\underline{\kappa}{+}\varepsilon_j)&=\langle\, \cdots, k_{\jmath+1}\ell+h,\cdots,(k_j{+}1)\ell,\cdots\cdots\cdots\cdots,k_q\ell+(\ell{-}2)\cdot  \cdots;\,\omega(\hslash)\,\rangle,   \\
 \eta_2(\underline{\kappa}{+}\varepsilon_j)&=\langle\, \cdots,\,k_{\jmath+1}\ell+h,\cdots,(k_j {+}1)\ell+(\ell{-}2), \cdots,\,k_q\ell, \cdots\cdots\cdots\cdots\,;\,\omega(\hslash)\,\rangle,\\
 \gamma_0(\eta_1(\underline{\kappa}{+}\varepsilon_j))&=(0,0,\ell{-}1,\cdots,\ell{-}1,\underset{\jmath+1}h,0,\cdots,\underset{j}0,\cdots,\underset{q}{\ell{-}2},0,\cdots,0),\\
 \gamma_0(\eta_2(\underline{\kappa}{+}\varepsilon_j))&= (0,0,\ell{-}1,\cdots,\ell{-}1,\underset{\jmath+1}h,0,\cdots,\underset{j}{\ell{-}2},\cdots,\underset{q}{0},0,\cdots,0),
    \end{split}
\end{equation}
In this case, by definition, $\eta_1(\underline{\kappa}{+}\varepsilon_j)-\varepsilon_j+\varepsilon_q\thicksim \eta(\underline{\kappa})$, and  $\eta_2(\underline{\kappa}{+}\varepsilon_j){+}\varepsilon_j{-}\varepsilon_q\thicksim \eta(\underline{\kappa}{+}\varepsilon_j{-}\varepsilon_q)$. 

Case (iii): If $\jmath\ge 1, \ j<\jmath{+}1<q:$
\begin{equation}
    \begin{split}
 \eta_1(\underline{\kappa}{+}\varepsilon_j)&=\langle\, \cdots, (k_j{+}1)\ell,\cdots\cdots\cdots\cdots,k_{\jmath+1}\ell+h,\cdots,k_q\ell+(\ell{-}2)\cdot  \cdots;\,\omega(\hslash)\,\rangle,   \\
 \eta_2(\underline{\kappa}{+}\varepsilon_j)&=\langle\, \cdots,(k_j {+}1)\ell+(\ell{-}2),\cdots,\,k_{\jmath+1}\ell+h,\cdots,\,k_q\ell, \cdots\cdots\cdots\cdots\,;\,\omega(\hslash)\,\rangle,\\
 \gamma_0(\eta_1(\underline{\kappa}{+}\varepsilon_j))&=(0,\ell{-}1,\cdots,\ell{-}1,\,\underset{j}0,\ \ell{-}1,\cdots\cdots, \ell{-}1,\underset{\jmath+1}h,0,\cdots,0,\underset{q}{\ell{-}2},0,\cdots,0),\\
 \gamma_0(\eta_2(\underline{\kappa}{+}\varepsilon_j))&= (0,\ell{-}1,\cdots,\ell{-}1,\underset{j}{\ell{-}2},\ell{-}1,\cdot\cdots,\ell{-}1,\underset{\jmath+1}h, 0,\cdots,0,\quad \underset{q}{0},\ 0,\cdots,0),
    \end{split}
\end{equation}
Clearly, we see that  $\eta_1(\underline{\kappa}{+}\varepsilon_j)-\varepsilon_j+\varepsilon_q\thicksim \eta(\underline{\kappa})$, and  $\eta_2(\underline{\kappa}{+}\varepsilon_j){+}\varepsilon_j{-}\varepsilon_q\thicksim \eta(\underline{\kappa}{+}\varepsilon_j{-}\varepsilon_q)$. 

Case (iv): If $\jmath\ge 1, \ j<\jmath{+}1=q:$ --- including two extreme cases: $\jmath=1$ with $h>0$; and $h=0$ but $\jmath>1$. For the latter case, since $(\ell{-}1,\ell{-}1,0)\thicksim (\ell{-}2,\ell{-}1,1)$, so we always assume that $h\ge 1$ without loss of generality
\begin{equation}
    \begin{split}
 \eta_1(\underline{\kappa}{+}\varepsilon_j)&=\langle\, \cdots, (k_j{+}1)\ell,\cdots\cdots\cdots\cdots,k_{\jmath+1}\ell+(h{-}1),\cdots;\,\omega(\hslash)\,\rangle,   \\
 \eta_2(\underline{\kappa}{+}\varepsilon_j)&=\langle\, \cdots,(k_j {+}1)\ell+(h{-}1),\cdots,\,k_{\jmath+1}\ell,\cdots\cdots\cdots\cdots\,;\,\omega(\hslash)\,\rangle,
    \end{split}
\end{equation}
It is clear that $\eta_1(\underline{\kappa}{+}\varepsilon_j)-\varepsilon_j+\varepsilon_{\jmath+1}\thicksim \eta(\underline{\kappa})$, and  $\eta_2(\underline{\kappa}{+}\varepsilon_j){+}\varepsilon_j{-}\varepsilon_{\jmath+1}\thicksim \eta(\underline{\kappa}{+}\varepsilon_j{-}\varepsilon_{\jmath+1})$. 

Case (v): If $\jmath\ge 1, \ j=\jmath{+}1<q:$  Same as Case (iv), we can assume $h\ge 1$ without loss of generality,
\begin{equation}
    \begin{split}
 \eta_1(\underline{\kappa}{+}\varepsilon_j)&=\langle\, \cdots, \,k_\jmath\ell, \qquad\,(k_j{+}1)\ell,\cdots\cdots,\,k_q\ell+(h{-}1),\cdots;\,\omega(\hslash)\,\rangle,   \\
 \eta_2(\underline{\kappa}{+}\varepsilon_j)&=\langle\, \cdots,\,k_\jmath\ell,\,(k_j {+}1)\ell+(h{-}1),\cdots,\,k_q\ell,\,\cdots\cdots\cdot\cdots\,;\,\omega(\hslash)\,\rangle,
    \end{split}
\end{equation}
By definition, clearly, $\eta_1(\underline{\kappa}{+}\varepsilon_j)-\varepsilon_j+\varepsilon_q\thicksim \eta(\underline{\kappa})$, and  $\eta_2(\underline{\kappa}{+}\varepsilon_j){+}\varepsilon_j{-}\varepsilon_q\thicksim \eta(\underline{\kappa}{+}\varepsilon_j{-}\varepsilon_q)$. 

In conclusion, by Theorem 14 (ii), all the above cases reveal the fact that 
$$
\mathfrak V_{\eta(\underline{\kappa}+\varepsilon_j-\varepsilon_q)}\,\bigoplus\, \mathfrak V_{\eta(\underline{\kappa})}\,\subsetneq\, \mathfrak V_{\eta_1(\underline{\kappa}+\varepsilon_j)}=\mathfrak V_{\eta_2(\underline{\kappa}{+}\varepsilon_j)}\,,\qquad \text{\it for } \quad \underline{\kappa}+\varepsilon_j-\varepsilon_q\succcurlyeq\underline{\kappa}\,.
$$

Note that for a given $s$, the stratification of ``energy grades'' of $m$-tuples with degree $s$ of non-negative integers (or elements in $\Omega_q^{(s)}(m|n)$) is distributed in a ``$\ell$-adic" way, 
for instance, for each $\gamma_0(\eta_i(\underline{\kappa}{+}\varepsilon_j))\thicksim(\underset{\jmath-1}{\underbrace{\ell{-}1,\cdots,\ell{-}1}},h{-}1,0,\cdots,0)
$, every upgrading one energy grade will shorten one $\jmath$-component and lower one degree in height. After raising energy grades up to a certain height, in the end of such lifting procedure we have to use $e_m,\,e_{m+1},\cdots\in \mathcal U_q(\mathfrak{gl}(m|n))$ to move $|\,\omega(\hslash)\,|$ ($>0$ if necessary) positions in the right hand side to the left in order to add energy grades. 

We are now in a position to establish a key claim/insight, which shows for any pair $(\underline{\kappa}', \underline{\kappa})$ in $\mathcal K(\kappa)$ (of the same energy grade $\kappa$) with pre-order $\underline{\kappa}'\succcurlyeq\underline{\kappa}$, how to embed the direct sum $\mathfrak V_{\eta(\underline{\kappa})}\bigoplus \mathfrak V_{\eta(\underline{\kappa}')}$ of energy grade $\kappa$ into a larger cyclic submodule $\mathfrak V_{\eta(\underline{\kappa}+\varepsilon_j)}$ of energy grade $\kappa+1$ in $\Omega_q^{(s)}(m|n,\textbf{r})$. Actually, combining Lemma 19 with the above argument, we can do a bit more as follows.

\begin{proposition} For any two successive $m$-tuples $(\underline{\kappa}', \underline{\kappa})$ in $\mathcal K(\kappa)$ with $\underline{\kappa}'\succ \underline{\kappa}$, then either

$(1)$ when  $\underline{\kappa}'\succcurlyeq\underline{\kappa}:$ $\mathfrak V_{\eta(\underline{\kappa}')}\,\bigoplus\,\mathfrak V_{\eta(\underline{\kappa})}\,\subsetneq\, \mathfrak V_{\eta(\underline{\kappa}+\varepsilon_j)}$, \ for some $j\in I_0;$ and or

$(2)$ when $\underline{\kappa}'\not\succcurlyeq\underline{\kappa}$, $($also $\underline{\kappa}\not\succcurlyeq\underline{\kappa}'):$ $\exists$ $\underline{\kappa}''\in\mathcal K(\kappa)$, satisfying    $\underline{\kappa}''\succcurlyeq\underline{\kappa}'$, and   $\underline{\kappa}''\succcurlyeq\underline{\kappa}$, such that 
$$
\mathfrak V_{\eta(\underline{\kappa}'')}\,\bigoplus\,\mathfrak V_{\eta(\underline{\kappa})}\,\bigoplus\,\mathfrak V_{\eta(\underline{\kappa}')}\,\subsetneq\, \mathfrak V_{\eta(\underline{\kappa}+\varepsilon_i)}\,\bigoplus\,\mathfrak V_{\eta(\underline{\kappa}'+\varepsilon_j)}, \quad \text{\it for some } \ i, \, j\in I_0.
$$
\end{proposition}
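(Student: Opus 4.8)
The plan is to deduce both alternatives from the ``basic case'' treated in Cases (i)--(v) immediately preceding the statement, where it was shown that whenever $j<q$ and $\underline\kappa+\varepsilon_j-\varepsilon_q\in\mathcal K(\kappa)$ (so in particular $\underline\kappa+\varepsilon_j-\varepsilon_q\succcurlyeq\underline\kappa$), one has
$$
\mathfrak V_{\eta(\underline\kappa+\varepsilon_j-\varepsilon_q)}\oplus\mathfrak V_{\eta(\underline\kappa)}\subsetneq\mathfrak V_{\eta_1(\underline\kappa+\varepsilon_j)}=\mathfrak V_{\eta_2(\underline\kappa+\varepsilon_j)}=\mathfrak V_{\eta(\underline\kappa+\varepsilon_j)},
$$
the last equality holding because $\eta_1(\underline\kappa+\varepsilon_j)\thicksim\eta(\underline\kappa+\varepsilon_j)$, whence Theorem~14(i) identifies the two cyclic submodules. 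Everything else is a combinatorial analysis of consecutive elements of the totally ordered set $(\mathcal K(\kappa),\succ)$, plus the bookkeeping needed to make the stated sums literally direct.

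First I would write down the immediate lexicographic predecessor explicitly: for $\underline\kappa\in\mathcal K(\kappa)$, the unique $\underline\kappa'\in\mathcal K(\kappa)$ with $\underline\kappa'\succ\underline\kappa$ and nothing strictly between is obtained by the standard recipe ``raise the last increasable coordinate of $\underline\kappa$, push all mass to its right into the final slot''. Reading off the difference, exactly one of two things occurs: either $\underline\kappa'-\underline\kappa$ is a single simple root $\alpha_j=\varepsilon_j-\varepsilon_{j+1}$, which is precisely the sub-case $\underline\kappa'\succcurlyeq\underline\kappa$; or $\underline\kappa'-\underline\kappa=\varepsilon_p-(c{+}1)\varepsilon_{p+1}+c\,\varepsilon_m$ for some $p<m-1$ and some $c\ge 1$ (this forces the truncation parameter $r\ge 3$, so the situation is non-vacuous), in which case the $\varepsilon_{p+1}$-partial sum being $-c<0$ shows that neither $\underline\kappa'\succcurlyeq\underline\kappa$ nor $\underline\kappa\succcurlyeq\underline\kappa'$. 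In the first situation Alternative (1) is immediate: apply the basic case to $(\underline\kappa',\underline\kappa)=(\underline\kappa+\varepsilon_j-\varepsilon_{j+1},\underline\kappa)$ to get $\mathfrak V_{\eta(\underline\kappa')}\oplus\mathfrak V_{\eta(\underline\kappa)}\subsetneq\mathfrak V_{\eta(\underline\kappa+\varepsilon_j)}$. In the second situation I would set $\underline\kappa'':=\underline\kappa+\varepsilon_p-\varepsilon_{p+1}\in\mathcal K(\kappa)$; one checks at once that $\underline\kappa''-\underline\kappa=\alpha_p\in Q^+$ and $\underline\kappa''-\underline\kappa'=c(\varepsilon_{p+1}-\varepsilon_m)\in Q^+$, so $\underline\kappa''\succcurlyeq\underline\kappa$ and $\underline\kappa''\succcurlyeq\underline\kappa'$ (this is exactly the $\underline\kappa''$ furnished by Lemma~19). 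Applying the basic case to $(\underline\kappa'',\underline\kappa)=(\underline\kappa+\varepsilon_p-\varepsilon_{p+1},\underline\kappa)$ gives $\mathfrak V_{\eta(\underline\kappa'')}\oplus\mathfrak V_{\eta(\underline\kappa)}\subsetneq\mathfrak V_{\eta(\underline\kappa+\varepsilon_p)}$, so $i:=p$; and bumping $\underline\kappa'$ at the final coordinate, $\underline\kappa'+\varepsilon_m\in\mathcal K(\kappa{+}1)$ (since its $m$-th entry is $c\le r-2$) with $\underline\kappa'+\varepsilon_m\succeq\underline\kappa'$, so $\mathfrak V_{\eta(\underline\kappa')}\subsetneq\mathfrak V_{\eta(\underline\kappa'+\varepsilon_m)}$ by Theorem~14(ii); take $j:=m$. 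As $\mathfrak V_{\eta(\underline\kappa'')}\subseteq\mathfrak V_{\eta(\underline\kappa+\varepsilon_p)}$ too, we get $\mathfrak V_{\eta(\underline\kappa'')}+\mathfrak V_{\eta(\underline\kappa)}+\mathfrak V_{\eta(\underline\kappa')}\subseteq\mathfrak V_{\eta(\underline\kappa+\varepsilon_p)}+\mathfrak V_{\eta(\underline\kappa'+\varepsilon_m)}$, and the inclusion is proper because the right-hand generators have energy grade $\kappa{+}1$ whereas the left-hand side lies in energy grade $\le\kappa$ (Lemma~9).

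It then remains to see that the indicated sums are \emph{direct}. On the left this is automatic: $\underline\kappa'',\underline\kappa,\underline\kappa'$ are pairwise distinct elements of $\mathcal K(E_0(s))$, hence $\mathfrak V_{\eta(\underline\kappa'')},\mathfrak V_{\eta(\underline\kappa)},\mathfrak V_{\eta(\underline\kappa')}$ are distinct summands of $\mathrm{Soc}\,\Omega_q^{(s)}(m|n,\textbf{r})=\bigoplus_{\underline\lambda\in\mathcal K(E_0(s))}\mathfrak V_{\eta(\underline\lambda)}$ (Theorem~17(C)), so their sum is direct. On the right one invokes the estimate — immediate from Lemma~9 — that $\mathfrak V_{\eta(\underline\lambda_1)}\cap\mathfrak V_{\eta(\underline\lambda_2)}$ is spanned by monomials $x^{\langle\alpha,\mu\rangle}$ with $\mathcal E\langle\alpha,\mu\rangle\preceq\min(\underline\lambda_1,\underline\lambda_2)$; since no element of $\Omega_q^{(s)}$ has energy grade $<E_0(s)$, this intersection vanishes as soon as $|\min(\underline\lambda_1,\underline\lambda_2)|<E_0(s)$. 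The main obstacle is precisely to verify this last inequality for the level-$(\kappa{+}1)$ pair on the right: one must choose the bumping indices $i,j$ (and, in Alternative~(2), the auxiliary $\underline\kappa''$) at coordinates that do not raise the coordinatewise minimum past $E_0(s)-1$, and then check that such a choice always exists — it does, because $(\underline\kappa',\underline\kappa)$ being \emph{consecutive} in $\succ$ already forces $|\min(\underline\kappa',\underline\kappa)|\le\kappa-1$, leaving exactly the slack needed (in Alternative~(2) the deficit of $\underline\kappa'$ at coordinate $m$ absorbs the bump harmlessly). The remaining labour is the routine but unavoidable case-checking that the simple-root basic cases invoked above really fall under Cases (i)--(v) relative to the shape $\gamma_0(\eta(\underline\kappa))=(\ell{-}1,\dots,\ell{-}1,h,0,\dots,0)$ and that all the admissibility constraints $k_i\le r-1$ are respected.
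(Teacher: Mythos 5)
Your proposal follows essentially the same route as the paper: the paper offers no separate argument for this proposition beyond the sentence ``combining Lemma~19 with the above argument,'' i.e.\ alternative~(1) is the basic case worked out in Cases~(i)--(v), and alternative~(2) is Lemma~19's witness $\underline{\kappa}''$ fed back into that basic case, with Lemma~9 and Theorem~14 supplying properness and disjointness. Your reconstruction of this is sound, and you are right to single out the directness of the right-hand sum in~(2) as the genuinely delicate point -- the paper itself writes $\bigoplus$ there without justification, and your energy-grade bound on $\mathfrak V_{\eta(\underline{\lambda}_1)}\cap\mathfrak V_{\eta(\underline{\lambda}_2)}$ via the componentwise version of Lemma~9 is the correct tool. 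One auxiliary claim in your sketch is false, however: the difference of two lexicographically consecutive elements of $\mathcal K(\kappa)$ need not have the form $\varepsilon_p-(c{+}1)\varepsilon_{p+1}+c\,\varepsilon_m$, because the cap $k_i\le r-1$ can force the redistributed mass to spread over several coordinates. For instance, with $m=4$, $r=2$, $\kappa=2$, the consecutive pair $(1001,\,0110)$ has difference $\varepsilon_1-\varepsilon_2-\varepsilon_3+\varepsilon_4$, which is not of your claimed shape (and this also refutes the side remark that case~(2) forces $r\ge 3$). Fortunately your argument does not actually use the explicit formula: the only inputs needed are the dichotomy of Lemma~19 and the validity of the specific witness $\underline{\kappa}''=\underline{\kappa}+\varepsilon_p-\varepsilon_{p+1}$ at the first differing coordinate $p$, and the latter still requires (but survives) the check that $k_{p+1}\ge 1$ and $k_p+1\le r-1$ in the incomparable case. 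With that correction the proof goes through as the authors intend.
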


\begin{coro} The graph with $\cup_{\kappa=E_0(s)}^{E(s)}\mathcal K(\kappa)$ as its vertex set and edges given by the preorder $\succeq$ between vertices from grade $\kappa$ to grade $\kappa+1$ weaves a net describing inclusions of submodules pertaining to $\Omega_q^{(s)}(m|n,\textbf{r})$, 
which consists of cyclic submodules $\mathfrak V_{\eta(\underline{\kappa})}$ for all vertex indices $\underline{\kappa}$ with $|\,\eta(\underline{\kappa})\,|=s$. 
\end{coro}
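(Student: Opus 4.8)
The statement collects the structural results proved above into one combinatorial picture, so the plan is to verify, level by level, that the graph in the statement is a faithful encoding of a distinguished family of cyclic $\mathcal U_q$-submodules of $\Omega_q^{(s)}(m|n,\textbf{r})$ and of the inclusions among them, and then to read off the net shape from Proposition 21 together with Lemma 19. Throughout I assume $s\in[\,\ell, N{-}\ell\,]$, the extreme ranges being covered by Theorem 17(A).

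First I would make precise the vertex-to-submodule correspondence. For each energy grade $\kappa$ with $E_0(s)\le\kappa\le E(s)$ and each $\underline{\kappa}\in\mathcal K(\kappa)$, the canonical staircase representative $\eta(\underline{\kappa})$ introduced before Proposition 21 (in Remark 18 at the socle level, and in the normal-form discussion of Subsection 4.4 in general) lies in $\mathbb{Z}_+^m\times\mathds{Z}_2^n(s,\textbf{r})$: the bounds $k_i\le r{-}1$ and $\gamma_0(\eta(\underline{\kappa}))\le\mathbf 1$ give $\eta(\underline{\kappa})\le\textbf{r}$, and by construction $|\eta(\underline{\kappa})|=\kappa\ell+|\gamma(\eta(\underline{\kappa}))|=s$, the admissibility of the exponent $\kappa$ in each of the intervals partitioning $[\,0,N\,]$ being exactly what Lemma 15 records. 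Since two representatives of the same class in $\wp_s(\mathcal K(\kappa))$ are $\thicksim$-equivalent, Theorem 14(i) shows that $\mathfrak V_{\eta(\underline{\kappa})}=\mathcal U_q.\,x^{\langle\eta(\underline{\kappa})\rangle}$ depends only on $\underline{\kappa}$. Hence the vertex set $\bigcup_{\kappa=E_0(s)}^{E(s)}\mathcal K(\kappa)$ labels a well-defined family of cyclic submodules, and this family is precisely $\{\,\mathfrak V_{\eta(\underline{\kappa})}\mid|\eta(\underline{\kappa})|=s\,\}$, as claimed.

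Next I would interpret the edges as inclusions. An edge from grade $\kappa$ to grade $\kappa{+}1$ joins $\underline{\kappa}$ to $\underline{\kappa}'$ precisely when $\underline{\kappa}'\succeq\underline{\kappa}$; since the energy grade rises by one this forces $\underline{\kappa}'=\underline{\kappa}+\varepsilon_j$ for a unique $j\in I_0$, and then $\mathcal E\langle\eta(\underline{\kappa}+\varepsilon_j)\rangle\succeq\mathcal E\langle\eta(\underline{\kappa})\rangle$, so Theorem 14(ii) gives the strict inclusion $\mathfrak V_{\eta(\underline{\kappa})}\subsetneq\mathfrak V_{\eta(\underline{\kappa}+\varepsilon_j)}$. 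Conversely, two vertices of one grade that are $\succeq$-incomparable have cyclic submodules with disjoint heads by Theorem 14(iii), so no inclusion between them holds; thus the edges record precisely the inclusions that obtain among the $\mathfrak V_{\eta(\underline{\kappa})}$. The two extreme strata are then identified: by Theorem 16(1) the grade-$E_0(s)$ vertices give simple submodules, and by Theorem 16(2) and Theorem 17(B),(C) their direct sum is $\text{Soc}\,\Omega_q^{(s)}(m|n,\textbf{r})$; by Theorem 16(3), together with the $\succeq$-domination constructed in its proof (and Theorem 14(i) to replace a dominating tuple by the canonical $\eta$ of its class), the grade-$E(s)$ vertices satisfy $\Omega_q^{(s)}(m|n,\textbf{r})=\sum_{|\underline{\kappa}|=E(s)}\mathfrak V_{\eta(\underline{\kappa})}$, so the full module is the top of the net and every monomial of degree $s$ already sits inside a listed submodule.

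Finally, to justify the word ``net'' --- that the picture is genuinely interwoven and not a disjoint union of chains --- I would invoke Lemma 19 and Proposition 21. For any two consecutive elements $\underline{\kappa}'\succ\underline{\kappa}$ of the totally ordered set $(\mathcal K(\kappa),\succ)$, either $\underline{\kappa}'\succcurlyeq\underline{\kappa}$, whence $\mathfrak V_{\eta(\underline{\kappa}')}\oplus\mathfrak V_{\eta(\underline{\kappa})}\subsetneq\mathfrak V_{\eta(\underline{\kappa}+\varepsilon_j)}$ for some $j$, or there is $\underline{\kappa}''\in\mathcal K(\kappa)$ with $\underline{\kappa}''\succcurlyeq\underline{\kappa}',\underline{\kappa}$ and $\mathfrak V_{\eta(\underline{\kappa}'')}\oplus\mathfrak V_{\eta(\underline{\kappa})}\oplus\mathfrak V_{\eta(\underline{\kappa}')}\subsetneq\mathfrak V_{\eta(\underline{\kappa}+\varepsilon_i)}\oplus\mathfrak V_{\eta(\underline{\kappa}'+\varepsilon_j)}$; running this over all consecutive pairs ties $\wp_s(\mathcal K(\kappa))$ together through grade $\kappa{+}1$, and iterating from $\kappa=E_0(s)$ up to $E(s)$ shows the graph is connected with every simple socle constituent dominated, through such edges, by the top generators --- exactly the web depicted in Figure 1 of Example 4.6. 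The one delicate point, and the place where care is needed, is the bookkeeping of the staircase representatives $\eta(\underline{\kappa})$ so that the edges furnished by Proposition 21 at the step $\kappa\to\kappa{+}1$ fit together consistently as $\kappa$ grows --- in particular the switch, at the top of each lifting chain, from the even Chevalley generators to $e_m,e_{m+1},\dots$ used to carry the $\omega(\hslash)$-tail leftward; this is precisely the content of Cases (i)--(v) preceding Proposition 21, to which I would simply refer.
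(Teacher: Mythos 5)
Your proposal is correct and follows essentially the same route as the paper, which states this corollary without a separate proof as an immediate summary of Theorem 14, Lemma 19, the Cases (i)--(v) discussion, and Proposition 20 (which you cite as ``Proposition 21''); your write-up simply makes that dependence explicit. The only cosmetic remark is that the non-inclusion between incomparable same-grade vertices is most directly Lemma 10 rather than Theorem 14(iii), but the latter rests on the former, so nothing is amiss.
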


\subsection{Loewy filtrations of $\Omega_q^{(s)}(m|n,\textbf{r})$ and Loewy layers}
For the indecomposable $\mathcal U_q$-modules $\Omega_q^{(s)}(m|n,\textbf{r})$ with $s\in \bigl[\, \ell, \,N{-}\ell\,]$ (see Theorem 17 (B) \& (C)), we will use a method of filtration to explore its submodule-constituents.

Associated to (4.12), the generator subsystem of grade $\kappa$
$$
\wp_s(\mathcal K(\kappa))=\{\, \eta(\underline{\kappa})=\langle\gamma_\alpha,\mu\rangle\in\mathbb Z_+^m\times\mathds Z_2^n(s, \mathbf r)\mid \mathcal E\langle\gamma_\alpha,\mu\rangle=\mathcal E(\eta(\underline{\kappa}))=\underline{\kappa}\in\mathcal K(\kappa)\,\}\subsetneq \mathbb Z_+^m\times \mathds Z_2^n(s, \mathbf r),$$ 
we rewrite 
\begin{equation}
    \mathcal K^{(s)}_i:=\mathcal K\bigl(E_0(s)+i\bigr)=\{\,\underline{\kappa}=(k_1,\cdots,k_m)\in \mathbb Z_+^m\mid |\,\underline{\kappa}\,|=\kappa=E_0(s) + i, \,  \forall j k_j\leq r-1\,\},     
\end{equation}
for $0\leq i\leq E(s)-E_0(s)$.
\begin{defi}
Set $\mathcal{V}_0=\text{Soc}\,\Bigl(\Omega_q^{(s)}(m|n,\textbf{r})\Bigr)$, and for $\,0<i\le E(s)-E_0(s)$, define 
\begin{equation}
\mathcal{V}_i=\text{\rm Span}_\mathbb{K}\left.\left\{\,x^{\langle\,\alpha, \mu\,\rangle}\in \Omega_q^{(s)}(m|n,\textbf{r})\ \right|\ E_0(s) \leq\mathcal E\langle\alpha, \mu\rangle\leq E_0(s)+i\,\right\}.
\end{equation}
Obviously,  $\mathcal{V}_{i-1}\subseteq \mathcal{V}_i$, for $0\le i\le E(s)-E_0(s)$.  
\end{defi}

Recall in Theorem 5 (2), we proved 
\[
\Omega_q^{(s')}(m|n,\textbf{1})\cong
\begin{cases}
V\bigl((\ell{-}1{-}s_j)\,\omega_{j-1}{+}s_j\omega_j\bigr)=\mathfrak V_{\underline{\textbf{s}}'_j},  \ & \text{\it if } \ s'=(j{-}1)(\ell{-}1)+s_j, \ 1\le j\le m,
 \\
 V\bigl((\ell{-}2)\,\omega_m+\omega_{m+\jmath}\bigr)=\mathfrak V_{\tau(\jmath)},  \ & \text{\it if } \ s'=m(\ell{-}1)+\jmath,  \  0\leq \jmath\leq n,
\end{cases}
\]
is a simple module, where $\underline{\textbf{s}}'_j=\langle\,\ell{-}1,\cdots,\ell{-}1, s_j, 0, \cdots, 0\,;\,\mathbf 0\,\rangle$, $0\leq s_j\leq \ell{-}1$, 
$\omega_j=\epsilon_1+\cdots+\epsilon_j$ is the $j$-th fundamental weight, and
$\tau(\jmath)=\langle\,\ell{-}1,\cdots,\ell{-}1\,;\,\omega(\jmath)\,\rangle$.

For $s\in \bigl[\, \ell, \,N{-}\ell\,]$, we define $s'=s-\kappa\,\ell$, and $\eta(\underline{\kappa})= \langle\,\underline{\kappa}\,\ell\,;\,\mathbf 0\,\rangle+\eta_j$,
\begin{equation}
    \eta_j=\begin{cases}
       \underline{\mathbf s}_j', \ & \ \text{\it if } \ s'=(j{-}1)(\ell{-}1)+s_j, \ 0\leq s_j\leq \ell{-}1, \ 1\le j\le m,\\
   \tau(j), \  & \ \text{\it if } \ s'=m(\ell{-}1)+j,  \  0\leq j\leq n.
\end{cases}    
\end{equation}

Recall that a Loewy filtration is a semi-simple filtration (i.e. with semi-simple successive quotients) of minimal length. The socle filtration and the radical filtration are examples of Loewy filtration ( see e.g, \cite{Hum1}, 8.14, p. 174\,)

\begin{theorem} $($Loewy Filtration$)$
For $s\in \bigl[\, \ell, \,N{-}\ell\,]$, the indecomposable $\mathcal U_q$-module $\Omega_q^{(s)}(m|n,\textbf{r})$ has the following filtration structure:  
\text{\rm (1)} $\mathcal{V}_i=\sum_{\eta(\underline{\kappa})\in \wp_s(\mathcal K^{(s)}_i)}\mathfrak V_{\eta(\underline{\kappa})}$, for $0\le i\le E(s)-E_0(s)$, is an increasing $\mathcal U_q$-submodules of $\Omega_q^{(s)}(m|n,\emph{\textbf{r}})$, and the chain of these submodules
\begin{equation}
 0\subset \mathcal{V}_0\subset\mathcal{V}_1\subset \cdots \subset \mathcal{V}_{E(s)-E_0(s)}=\Omega_q^{(s)}(m|n,\textbf{r})
\end{equation}
forms a Loewy filtration $($for definition, see \cite{Hum1}, 8.14, p. 174\,$)$ of \,$\Omega_q^{(s)}(m|n,\textbf{r})$.

\text{\rm (2)} $x^{\langle\,\eta(\underline{\kappa})\,\rangle}$'s, for all $\eta(\underline{\kappa})\in \wp_s(\mathcal K_i^{(s)})$ with $\underline{\kappa}\in \mathcal{K}_i^{(s)}$, are primitive vectors $($relative to $\mathcal V_{i-1}$$)$ of grade $\kappa$ in $\mathcal{V}_i$ $(\,i.e.$, $e_j.\,(x^{\langle\,\eta(\underline{\kappa})\,\rangle}+\mathcal V_{i-1})\in\mathcal V_{i-1}$, for all $j\in I$\,$)$, and 
$$
\mathcal U_q.\, (x^{\langle\eta(\underline{\kappa})\,\rangle}+\mathcal{V}_{i-1} )\cong\mathfrak{V}_{\langle\eta(\underline{\kappa})\,\rangle}/\bigl(\mathfrak{V}_{\langle\eta(\underline{\kappa})\,\rangle}\,\cap\mathcal V_{i-1}\bigr).
$$ 
Its $i$-th Loewy layer
\begin{equation}
\begin{split}
\mathcal{V}_i/\mathcal{V}_{i-1}&=\emph{\text{Span}}_\mathbb{K}\{\,x^{\langle\,\alpha, \mu\,\rangle}+\mathcal{V}_{i-1}\mid \mathcal E(\langle\,\alpha, \mu\,\rangle)=E_0(s)+i\,\}\\
&=\bigoplus\limits_{\eta(\underline{\kappa})\in \mathcal K_i^{(s)}}\mathcal U_q.\, \bigl(x^{\langle\,\eta(\underline{\kappa})\,\rangle}+\mathcal{V}_{i-1}\bigr)=\text{Soc}\,\Bigl(\Omega_q^{(s)}(m|n,\textbf{r})/\mathcal{V}_{i-1}\Bigr)\\
&\cong (\sharp\,\mathcal K_i^{(s)})\,\mathfrak{V}_{\gamma(\eta(\underline{\kappa}))}
\cong  (\sharp\,\mathcal K_i^{(s)})\,\Omega_q^{(s')}(m|n,\textbf{1})
\end{split}
\end{equation}
is the direct sum of $\sharp\,\mathcal K^{(s)}_i$ isomorphic copies of simple module $\mathfrak
V_{\gamma(\eta(\underline{\kappa}))}=\Omega_q^{(s')}(m|n,\textbf{1})$, 
where $\gamma(\eta(\underline{\kappa}))=\eta(\underline{\kappa})-\langle\,\underline{\kappa}\,\ell\,; \mathbf 0\,\rangle=\eta_j\in\mathbb Z_+^m\times\mathds Z_2^n(s',\bf 1)$, 
$($\,see $(4.1)\,)$, $s'=s-\kappa\,\ell=|\,\gamma(\eta(\underline{\kappa}))\,|$.
\end{theorem}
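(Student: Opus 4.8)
The plan is to establish the three claims in order, leaning heavily on the Stratification Theorem~14, the socle computation in Theorem~16, and the weaving machinery of Proposition~20 and Corollary~22. First I would verify that each $\mathcal V_i$ defined in (4.16) is a $\mathcal U_q$-submodule: by Lemma~9, acting by any $u\in\mathcal U_q$ cannot raise the energy grade, so the span of monomials of energy grade $\leq E_0(s)+i$ is visibly $\mathcal U_q$-stable, and it equals $\sum_{\eta(\underline\kappa)\in\wp_s(\mathcal K_i^{(s)})}\mathfrak V_{\eta(\underline\kappa)}$ because Theorem~16(3)'s argument (realizing any monomial as $z.x^{\langle\eta(\underline\kappa)\rangle}$ for a suitable $\underline\kappa\succeq$ its energy vector) applies verbatim once we restrict to energy grades up to $E_0(s)+i$: any monomial of energy grade $\leq E_0(s)+i$ lies in some $\mathfrak V_{\eta(\underline\kappa)}$ with $|\underline\kappa|\leq E_0(s)+i$, and conversely $\mathfrak V_{\eta(\underline\kappa)}$ has all energy grades $\leq|\underline\kappa|$ by Lemma~9. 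The chain terminates at $\Omega_q^{(s)}(m|n,\mathbf r)$ because $\mathcal K_{E(s)-E_0(s)}^{(s)}$ contains generators of the top energy grade $E(s)$, and Theorem~16(3) says these generate everything.

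\textbf{Loewy layers and semisimplicity.} Next, for claim~(2), I would identify the successive quotient $\mathcal V_i/\mathcal V_{i-1}$. The first equality in (4.18) is immediate from the definitions (4.16) of $\mathcal V_i$, $\mathcal V_{i-1}$ as spans over energy-grade intervals. For the cyclic pieces: $x^{\langle\eta(\underline\kappa)\rangle}$ with $|\underline\kappa|=E_0(s)+i$ is primitive relative to $\mathcal V_{i-1}$ since each $e_j$ either kills it or produces a monomial of strictly lower energy grade (Lemma~9 together with the explicit $e_j$-formulas in Proposition~4, using that $\gamma_0(\eta(\underline\kappa))\le\mathbf 1$ so the coefficients $[\,r_{i_\alpha}{+}1\,]$ vanish exactly at the energy-grade boundaries); hence $\mathcal U_q.(x^{\langle\eta(\underline\kappa)\rangle}+\mathcal V_{i-1})$ is a quotient of $\mathfrak V_{\eta(\underline\kappa)}$. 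Now $\mathfrak V_{\eta(\underline\kappa)}\cap\mathcal V_{i-1}$ is exactly the maximal submodule of the simple-modulo-$\mathcal V_{i-1}$ picture: by Theorem~14(iii), the heads of $\mathfrak V_{\eta(\underline\kappa)}$ for inequivalent $\underline\kappa$ of the same $|\,\cdot\,|$ are disjoint, so the sum $\sum_{\eta(\underline\kappa)\in\wp_s(\mathcal K_i^{(s)})}\mathcal U_q.(x^{\langle\eta(\underline\kappa)\rangle}+\mathcal V_{i-1})$ inside $\Omega_q^{(s)}/\mathcal V_{i-1}$ is direct, and it is all of $\mathcal V_i/\mathcal V_{i-1}$ by the generation statement. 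That it equals $\mathrm{Soc}(\Omega_q^{(s)}(m|n,\mathbf r)/\mathcal V_{i-1})$ follows by applying Theorem~16(1)--(2) to the quotient module: modulo $\mathcal V_{i-1}$ the lowest energy grade becomes $E_0(s)+i$, and the simplicity criterion of Theorem~16(1) shows precisely these cyclic modules are simple and exhaust the socle. Finally the isomorphism $\mathcal U_q.(x^{\langle\eta(\underline\kappa)\rangle}+\mathcal V_{i-1})\cong\Omega_q^{(s')}(m|n,\mathbf 1)$ with $s'=s-\kappa\ell$ is obtained by the energy-grade-shift: stripping off $\langle\underline\kappa\,\ell;\mathbf 0\rangle$ identifies the $\mathcal U_q$-action on this quotient with the action on the restricted object $\Omega_q^{(s')}(m|n,\mathbf 1)$ described in Theorem~5(2), via the explicit matching $\gamma(\eta(\underline\kappa))=\eta_j$ of (4.17). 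Counting the copies gives the factor $\sharp\,\mathcal K_i^{(s)}$.

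\textbf{Minimality (the Loewy property).} The main obstacle is showing the filtration (4.17) has \emph{minimal} length among semisimple filtrations, i.e.\ that it is genuinely a Loewy filtration and not merely a semisimple one. The length here is $E(s)-E_0(s)+1$. I would argue this is forced by bounding the Loewy length below: the radical filtration of $\Omega_q^{(s)}(m|n,\mathbf r)$ must have length at least $E(s)-E_0(s)+1$ because one can exhibit a chain of submodules $\mathfrak V_{\eta(\underline\kappa^{(E_0)})}\subsetneq\mathfrak V_{\eta(\underline\kappa^{(E_0+1)})}\subsetneq\cdots$ with strictly increasing energy grades using the weaving inclusions of Proposition~20 (each step $\mathfrak V_{\eta(\underline\kappa)}\subsetneq\mathfrak V_{\eta(\underline\kappa+\varepsilon_j)}$ genuinely jumps one energy grade), and along such a chain the socle series cannot stabilize faster than one energy grade per layer — precisely because, by Lemma~9, elements of energy grade $E_0(s)+i$ cannot be reached from elements of energy grade $\leq E_0(s)+i-1$ by the $\mathcal U_q$-action, so they survive to depth $i$ in any semisimple filtration. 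Combined with the upper bound given by our explicit filtration of length $E(s)-E_0(s)+1$, the radical length equals $E(s)-E_0(s)+1$, whence (4.17) — having exactly this length and semisimple quotients — is a Loewy filtration. The bookkeeping for the lower bound, tracking how Proposition~20(2) forces the extra generator $\eta(\underline\kappa'')$ and thus that each $\mathcal K_i^{(s)}$-layer is non-redundant, is where the argument requires genuine care rather than routine verification.
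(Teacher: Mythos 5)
Your proposal is correct and follows essentially the same route as the paper's proof: the same definition of $\mathcal V_i$ by energy grade with Lemma~9 giving $\mathcal U_q$-stability, the Theorem~16(3)-style argument together with Proposition~20 for the generation statement $\mathcal V_i=\sum\mathfrak V_{\eta(\underline\kappa)}$, Theorem~14 for directness and semisimplicity of the layers, the explicit $e_t$-computation from Proposition~4 for primitivity, and Lemma~11 with Theorem~5(2) for the identification of each cyclic subquotient with $\Omega_q^{(s')}(m|n,\mathbf 1)$. The one divergence is your final paragraph on minimality, which is redundant rather than the ``main obstacle'': once each layer is identified with $\text{Soc}\bigl(\Omega_q^{(s)}(m|n,\mathbf r)/\mathcal V_{i-1}\bigr)$, the filtration is the socle filtration and is therefore automatically a Loewy filtration (this is the standard fact the paper invokes via \cite{Hum1}, 8.14), so the separate lower bound on the radical length — which in any case reduces to the same socle identification to be made rigorous — is not needed.
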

\begin{proof} 
(1) By definition of $E_0(s)$, $\text{Edeg}\,\bigl(x^{\langle\,\alpha,\mu\,\rangle}\bigr)\geq E_0(s)$,  for all $x^{\langle\,\alpha,\mu\,\rangle}\in \mathcal{V}_i\subseteq \Omega_q^{(s)}(m|n,\emph{\textbf{r}})$. Meanwhile, Lemma 9 gives $\text{Edeg}\,\bigl(u.\, x^{\langle\,\alpha,\mu\,\rangle}\bigr)\leq \text{Edeg}\,(x^{\langle\,\alpha,\mu\,\rangle})\leq E_0(s)+i$. Thus, Definition 8 implies that $\mathcal{V}_i$ is a $\mathcal U_q$-submodule of $\Omega_q(m|n,\textbf{r})$.

If $\text{Edeg}\,(x^{\langle\,\alpha,\mu\,\rangle})=E_0(s)+i$, then $x^{\langle\,\alpha,\mu\,\rangle}\notin \mathcal{V}_{i-1}$, by definition, $\mathcal{V}_i/\mathcal{V}_{i-1}$ is spanned by $\{\,x^{\langle\,\alpha,\mu\,\rangle}+\mathcal{V}_{i-1}\mid\mathcal E\,(\langle\,\alpha,\mu\,\rangle)=E_0(s)+i\,\}$. Theorem 14 (ii) indicates that $\mathcal K_i^{(s)}$ parameterizes the generating set of $\mathcal{V}_i/\mathcal{V}_{i-1}$.  Combining Proposition 20 with a similar method of the proof of Theorem 16 (3), one can deduce that $\mathcal{V}_i=\sum_{\eta(\underline{\kappa})\in \wp_s(\mathcal K^{(s)}_i)}\mathfrak V_{\eta(\underline{\kappa})}$. Moreover, Theorem 14 (i) \& (iii) implies that $\mathcal{V}_i/\mathcal{V}_{i-1}$ is semisimple and maximal (by Theorem 16 (1)). 
Namely, (4.22) is a Loewy filtration, by definition. This is equivalent to say $$\mathcal{V}_i/\mathcal{V}_{i-1}=\text{Soc}\Bigl(\Omega_q^{(s)}(m|n,\textbf{r})/\mathcal{V}_{i-1}\Bigr).$$

(2) When $s'=s-\kappa\,\ell\leq m(\ell{-}1)$, by (4.21), that is, $1\le j\le m$, $0\le s_j\le \ell{-}1$, we assert that $x^{\langle\,\eta(\underline{\kappa})\,\rangle}$ with $\eta(\underline{\kappa})=\langle\,\underline{\kappa}\,\ell\,;\,\mathbf 0\,\rangle+\underline{\textbf{s}}_j'$ is a primitive vector of $\mathcal{V}_i$ relative to $\mathcal{V}_{i-1}\ (i\geq 1)$. 

In fact, by Proposition 4, we have
$$
e_t.\, x^{\langle\,\eta(\underline{\kappa})\,\rangle}=\begin{cases}
      [\,k_t\ell+\ell\,]\,x^{\langle\,\eta(\underline{\kappa})+\varepsilon_t-\varepsilon_{t+1}\,\rangle}=0, &\qquad t<j\le m,\\
[\,k_t\ell+\delta_{j,t}s_j+1\,]\,x^{\langle\,\eta(\underline{\kappa})+\varepsilon_t-\varepsilon_{t+1}\rangle}, &\qquad j\leq t< m,\\
      0, &\qquad t\geq m.
      \end{cases}
      $$
Namely, 
\text{\rm (i)} $j=m:$ \ $e_t.\, x^{\langle\,\eta(\underline{\kappa})\,\rangle}=0$, for $1\leq t< m+n$, $x^{\langle\,\eta(\underline{\kappa})\,\rangle}$ is a maximal weight vector.

\text{\rm (ii)} $j<m:$ \ $e_t.\, x^{\langle\,\eta(\underline{\kappa})\,\rangle}=0$, for $t<j$, or $t\geq m$; and $e_t.\, x^{\langle\,\eta(\underline{\kappa})\,\rangle}=c\,x^{\langle\,\eta(\underline{\kappa})+\varepsilon_t-\varepsilon_{t+1}\rangle}\in \mathcal{V}_{i-1}$, for $j\leq t< m$ and $c\in \mathbb{K}$. So, $x^{\langle\,\eta(\underline{\kappa})\,\rangle}$ is a primitive vector of $\mathcal{V}_i$ relative to $\mathcal{V}_{i-1}\ (i\geq 1)$.

When $s'=s-\kappa\,\ell\ge m(\ell{-}1)$, by (4.21), that is, $0\le j\le n$, we see that $x^{\langle\,\eta(\underline{\kappa})\,\rangle}$ with $\eta(\underline{\kappa})=\langle\,\underline{\kappa}\,\ell\,;\,\mathbf 0\,\rangle+\tau(j)$ is a maximal weight vector, as $e_t.\, x^{\langle\,\eta(\underline{\kappa})\,\rangle}=0$ for  all $t\in J$, by Proposition 4.

Set $\overline{\mathfrak {V}}_{\eta(\underline{\kappa})}:=\mathcal U_q.\,
(x^{\langle\,\eta(\underline{\kappa})\,\rangle}+\mathcal{V}_{i-1})$, for $\eta(\underline{\kappa})\in\wp_s(\mathcal K_i^{(s)})$. By Lemma 11 and Theorem 5 (2), as well as the proof of Proposition 20, we get that
\begin{equation*}
\overline{\mathfrak{V}}_{\eta(\underline{\kappa})} \cong \mathcal U_q.\,x^{\langle\,\eta(\underline{\kappa})\,\rangle}/\bigl(\mathcal U_q.\,x^{\langle\,\eta(\underline{\kappa})\,\rangle}\cap \mathcal{V}_{i-1}\bigr)
\cong \mathcal U_q.\, x^{\langle\,\eta_j\,\rangle}=\mathfrak{V}_{\eta_j}=\Omega_q^{(s')}(m|n,\textbf{1}).
\end{equation*}
So, $\overline{\mathfrak{V}}_{\langle\,\eta(\underline{\kappa})\,\rangle}$ is a simple submodule of $\mathcal{V}_i/\mathcal{V}_{i-1}$.
%
\end{proof}

As a consequence of Theorem 23, we obtain some similar combinatorial formulas.
\begin{coro}
\text{\rm (1)} $\sharp\,\mathcal{K}_i^{(s)}
=\sum\limits_{j=0}^{\lfloor\frac{E_0(s)+i}{r}\rfloor}(-1)^j\binom{m}{j}\binom{m{+}(E_0(s){+}i){-}jr{-}1}{m{-}1}.$

\text{\rm (2)}
For $s'=s-\kappa\,\ell\leq m(\ell{-}1)$, or $m(\ell{-}1)<s'\leq m(\ell{-}1)+n$, we have
\begin{equation*}
\begin{split} &\dim\,\Omega_q^{(s)}(m|n,\textbf{r})=\sum\limits_{i=0}^{n}\sum\limits_{j=0}^{\lfloor\frac{s-i}{r\ell}\rfloor}(-1)^j\binom{n}{i} \binom{m}{j}\binom{m{+}s{-}i{-}jr\ell{-}1}{m{-}1}\\
&=\sum\limits_{j=0}^{\lfloor\frac{E_0(s)+i}{r}\rfloor}(-1)^j\binom{m}{j}\binom{m{+}(E_0(s){+}i){-}jr{-}1}{m{-}1}\times \sum\limits_{i=0}^n\sum\limits_{j=0}^{\lfloor \frac {s_j'-i}{\ell}\rfloor}\binom{n}{i}
\binom{m}{j}\binom{m{+}s_j'{-}i{-}j\ell{-}1}{m{-}1}. 
\end{split}
\end{equation*}
%
\begin{proof}
\text{\rm (1)} From the definition of $\mathcal{K}_i^{(s)}$, $\sharp\,\mathcal{K}_i^{(s)}$ is equal to the coefficient of $t^{E_0(s)+i}$ of polynomial $P_{m,r}(t)=(1+t+t^2+\cdots+t^{r-1})^m$. So, it is true.

\text{\rm (2)} For $s'=s-\kappa\,\ell\leq m(\ell{-}1)$, or $m(\ell{-}1)<s'\leq m(\ell{-}1)+n$, it follows from \text{\rm (1)},  Lemma 6 \& Proposition 7, as well as
$$\Omega_q^{(s)}(m|n,\textbf{r})\cong \bigoplus\limits_{i=0}^{E(s)-E_0(s)}\mathcal{V}_i/\mathcal{V}_{i-1}\cong \bigoplus\limits_{i=0}^{E(s)-E_0(s)}\sharp\,\mathcal{K}_i^{(s)}\,\Omega_q^{(s')}(m|n,\textbf{1}),$$
as vector spaces.
\end{proof}
\end{coro}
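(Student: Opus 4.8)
The plan is to handle the two assertions separately, reducing each to a coefficient extraction from an explicit generating function.

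For \text{\rm (1)}, I would read off from (4.19) that $\mathcal{K}_i^{(s)}$ is precisely the set of bounded compositions $\underline{\kappa}=(k_1,\dots,k_m)\in\mathbb{Z}_+^m$ with $|\underline{\kappa}|=\kappa:=E_0(s)+i$ and $0\le k_j\le r-1$ for all $j$, so that $\sharp\mathcal{K}_i^{(s)}=[t^{\kappa}]\,P_{m,r}(t)$ with $P_{m,r}(t)=(1+t+\cdots+t^{r-1})^m=(1-t^r)^m(1-t)^{-m}$. Expanding $(1-t^r)^m=\sum_{j\ge0}(-1)^j\binom{m}{j}t^{jr}$ and $(1-t)^{-m}=\sum_{k\ge0}\binom{m+k-1}{m-1}t^k$ and multiplying, the coefficient of $t^{\kappa}$ is $\sum_{j}(-1)^j\binom{m}{j}\binom{m+\kappa-jr-1}{m-1}$, where $j$ may be restricted to $0\le j\le\lfloor\kappa/r\rfloor$ because $\binom{m+\kappa-jr-1}{m-1}=0$ for larger $j$. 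This is exactly the stated formula.

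For \text{\rm (2)}, I would invoke the Loewy filtration of Theorem 23: the chain $0\subset\mathcal{V}_0\subset\cdots\subset\mathcal{V}_{E(s)-E_0(s)}=\Omega_q^{(s)}(m|n,\textbf{r})$ has $i$-th layer $\mathcal{V}_i/\mathcal{V}_{i-1}\cong(\sharp\mathcal{K}_i^{(s)})\,\Omega_q^{(s_i')}(m|n,\textbf{1})$ as $\mathcal U_q$-modules, hence as vector spaces, with $s_i'=s-(E_0(s)+i)\ell$. Since dimension is additive along a filtration,
\[
\dim\Omega_q^{(s)}(m|n,\textbf{r})=\sum_{i=0}^{E(s)-E_0(s)}\sharp\mathcal{K}_i^{(s)}\,\dim\Omega_q^{(s_i')}(m|n,\textbf{1}),
\]
and substituting part \text{\rm (1)} for the first factor together with Lemma 6 for the second yields the ``layered'' combinatorial expression for the dimension. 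To identify it with the closed form of Proposition 7, I would pass to generating functions: the monomial bases of $\mathcal A_q(m,\textbf{r})$ and $\Lambda_{q^{-1}}(n)$ give $\dim\Omega_q^{(s)}(m|n,\textbf{r})=[t^s]\,(1+t+\cdots+t^{r\ell-1})^m(1+t)^n$, and the factorization
\[
1+t+\cdots+t^{r\ell-1}=\bigl(1+t^\ell+\cdots+t^{(r-1)\ell}\bigr)\bigl(1+t+\cdots+t^{\ell-1}\bigr)
\]
splits this coefficient into the energy-grade bookkeeping $[t^{\ell\kappa}]\,(1+t^\ell+\cdots+t^{(r-1)\ell})^m=\sharp\mathcal{K}_i^{(s)}$ times $[t^{s_i'}]\,(1+t+\cdots+t^{\ell-1})^m(1+t)^n=\dim\Omega_q^{(s_i')}(m|n,\textbf{1})$; comparing the two expansions of $[t^s]$ closes the identity.

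The main obstacle is organizational rather than conceptual. First, one must confirm that for every $i$ with $0\le i\le E(s)-E_0(s)$ the residual degree $s_i'=s-(E_0(s)+i)\ell$ lands in one of the ranges $[0,m(\ell{-}1)]$ or $(m(\ell{-}1),m(\ell{-}1)+n]$ for which Theorem 23 \text{\rm (2)} (via Theorem 5 \text{\rm (2)}) identifies the layer with the simple module $\Omega_q^{(s_i')}(m|n,\textbf{1})$ and the representative $\eta_j$ of (4.21) is well defined; this is where the estimates on $E_0(s)$ and $E(s)$ from Lemma 15 are needed. Second, one must keep the $\lfloor\cdot\rfloor$-truncations in the two binomial sums mutually consistent when equating the layered and closed forms. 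Once these ranges are pinned down, both statements reduce to routine coefficient comparisons, which is why I would present them through the single generating-function identity above rather than by manipulating the binomial sums directly.
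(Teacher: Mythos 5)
Your proposal is correct and follows essentially the same route as the paper: part (1) by extracting the coefficient of $t^{E_0(s)+i}$ from $P_{m,r}(t)=(1+t+\cdots+t^{r-1})^m$, and part (2) by combining the Loewy-layer decomposition of Theorem 23 with Lemma 6 and Proposition 7. You simply spell out the details (the expansion of $(1-t^r)^m(1-t)^{-m}$ and the factorization $1+t+\cdots+t^{r\ell-1}=(1+t^\ell+\cdots+t^{(r-1)\ell})(1+t+\cdots+t^{\ell-1})$) that the paper leaves implicit.
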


\subsection{Example} Now we give an example to show the structural variations of $\Omega_q^{(s)}(m|n,\textbf{r})$ by increasing the degree $s$, as well as energy grades. In the following picture, each point represents one simple submodule of a Loewy layer, and each arrow represents the linked relationships existing among the simple subquotients. For example, $a\rightarrow b$ means that there exists $u\in\mathcal  U_q$ such that $u.\, a=b$.
\begin{example}
In $\Omega_q^{(s)}(m|n,\textbf{r})$, set $m=3=\ell,\ n=2=r:$ $N=m(r\ell{-}1)+n=17$, consider
the cases when $1\le s\le 17$.
\begin{figure}[h]
\centering
\includegraphics[width=9cm,height=2cm]{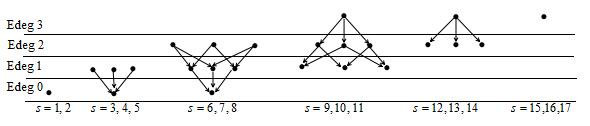}
\caption{Loewy filtrations (submodules-including nets) of $\Omega^{(s)}(m|n, \mathbf r)$ for $s\le 17$}
\label{fig:1}
\end{figure}
$$
\mathcal K(\imath)=\{\,\underline{\kappa}=k_1k_2k_3\mid k_1+k_2+k_3=\imath, \ 0\le k_i\le 1\,\}.
$$ 
that is,
\begin{equation*}
\mathcal K(0)=\{\,000\,\}, \quad 
\mathcal K(1)=\{\,100,010,001\,\}, \quad
\mathcal K(2)=\{\,110,101,011\,\},\quad
\mathcal K(3)=\{\,111\,\}.
\end{equation*}

For any $\langle\alpha,\emph{\textbf{0}}\rangle\in \bigcup\limits_{i=0}^{m(r-1)}\mathcal K(i)\,\ell\times \mathds{Z}_2^2$, $\exists$ $\langle\,\beta, \mu\,\rangle\in \mathbb Z_+^3\times\mathds Z_2^2(3|2, \mathbf 2)$ where $\beta_i\leq\ell{-}1$, such that $|\langle\alpha,\emph{\textbf{0}}\rangle+\langle\beta,\mu\rangle|=s$, $\mathcal E\langle\,\alpha+\beta;\mu\,\rangle=\mathcal E(\alpha)$. 
Loewy filtrations of indecomposable $\mathcal U_q$-modules $\Omega^{(s)}(m|n, \mathbf r):$

\medskip\noindent
\text{\rm (1)} For $s=1,\, 2:$ \ $E_0(s)=0=E(s)$, $\mathcal K(0)$.
$ \emph{\text{Soc}}\ \Omega_q^{(s)}(3|2,\emph{\textbf{2}})=\emph{\text{Span}}_{\mathbb{K}}\{\,x^{\langle\,\alpha+\beta, \mu\,\rangle}\mid \mathcal E(\alpha)=000\,\}=\mathfrak{V}_{\langle \eta\rangle}=
 \Omega_q^{(s)}(3|2,\emph{\textbf{2}})$
is simple.

\medskip\noindent
\text{\rm (2)} For $s=3, \,4, \,5:$ \ $E_0(s)=0$, $E(s)=1$, $\mathcal K(0)$,
$\mathcal K(1)$.
$$
\emph{\text{Soc}}\ \Omega_q^{(s)}(3|2,\emph{\textbf{2}})=\emph{\text{Span}}_{\mathbb{K}}\{\,x^{\langle\alpha+\beta, \mu\rangle}\mid \mathcal E(\alpha)=000\,\}=\mathfrak V_{\langle\eta(000)\rangle}\subset
 \sum_{\mathcal E(\alpha)\in\mathcal K(1)}\mathfrak{V}_{\langle\alpha+\beta,\mu\rangle}=\Omega_q^{(s)}(3|2,\emph{\textbf{2}}).
$$
\text{\rm (3)} For $s=6,\, 7,\, 8:$ \ $E_0(s)=0$, $E(s)=2$, $\mathcal K(0)$, $\mathcal K(1)$,
$\mathcal K(2)$.
\begin{equation*}
\begin{split}
 \emph{\text{Soc}}\ \Omega_q^{(s)}(3|2,\emph{\textbf{2}})&=\emph{\text{Span}}_{\mathbb{K}}\{\,x^{\langle\,\alpha+\beta, \mu\,\rangle}\mid \mathcal E(\alpha)=000\,\}=\mathfrak V_{\langle\,\eta(000)\,\rangle}\\
 &\subset \sum_{\mathcal E(\alpha)\in\mathcal K(1)}\mathfrak{V}_{\langle\,\alpha+\beta,\mu\,\rangle}\subset \sum_{\mathcal E(\alpha)\in\mathcal K(2)}\mathfrak{V}_{\langle\,\alpha+\beta,\mu\,\rangle}=
\Omega_q^{(s)}(3|2,\emph{\textbf{2}}).
\end{split}
\end{equation*}
\text{\rm (4)} For $s=9, \,10,\, 11:$ \ $E_0(s)=1$, $E(s)=3$, $\mathcal K(1)$, $\mathcal K(2)$,
$\mathcal K(3)$.
\begin{equation*}
\begin{split}
 \emph{\text{Soc}}\ \Omega_q^{(s)}(3|2,\emph{\textbf{2}})&=\emph{\text{Span}}_{\mathbb{K}}\{\,x^{\langle\,\alpha+\beta, \mu\,\rangle}\mid \mathcal E(\alpha)\in\mathcal K(1)\,\}= \sum_{\mathcal E(\alpha)\in\mathcal K(1)}\mathfrak{V}_{\langle\,\alpha+\beta,\mu\,\rangle}\\
 &\subset \sum_{\mathcal E(\alpha)\in\mathcal K(2)}\mathfrak{V}_{\langle\,\alpha+\beta,\mu\,\rangle}\subset
\mathfrak{V}_{\langle\,\eta(111)\,\rangle}=\Omega_q^{(s)}(3|2,\emph{\textbf{2}}).
\end{split}
\end{equation*}
\text{\rm (5)} For $s=12,\, 13,\, 14:$ \ $E_0(s)=2$, $E(s)=3$, $\mathcal K(2)$, $\mathcal K(3)$.
\begin{gather*}
 \emph{\text{Soc}}\ \Omega_q^{(s)}(3|2,\emph{\textbf{2}})=\bigoplus_{\underline{\kappa}\in\mathcal K(2)}\mathfrak V_{\langle\,\eta(\underline{\kappa})\,\rangle}\subset \mathfrak{V}_{\langle\,\eta(111)\,\rangle}=
 \Omega_q^{(s)}(3|2,\emph{\textbf{2}}).
\end{gather*}
\text{\rm (6)} For $s=15,\, 16,\, 17:$ \ $E_0(s)=E(s)=3$, $\mathcal K(3)$.
$\Omega_q^{(s)}(3|2,\emph{\textbf{2}})=\mathfrak{V}_{\langle\, \eta(111), \,\mu\,\rangle}
=\emph{\text{Soc}}\ \Omega_q^{(s)}(3|2,\emph{\textbf{2}})
$
is simple.
\end{example}

\subsection{Rigidity of $\Omega_q^{(s)}(m|n,\textbf{r})$}
For a module $M$, both radical filtration and socle filtration are Loewy filtrations, and $\text{Rad}^{r-k}M\subseteq \text{Soc}^k\,M$, with $r=\ell\ell\, M$ being the Loewy length of $M$. In what follows, we will demonstrate the coincidence of both filtrations for $\Omega_q^{(s)}(m|n,\textbf{r})$, namely, the indecomposable $\mathcal U_q$-module $\Omega_q^{(s)}(m|n,\textbf{r})$ for any $s$ is rigid in the sense of 8.14 \cite{Hum1} (also see \cite{Ir1, Ir2}).
\begin{theorem} $($Rigidity$)$
Suppose $m\ge 2$, and $\textbf{char}(q)=\ell\geq 3$. Then the $\mathcal U_q$-modules $\Omega_q^{(s)}(m|n,\textbf{r})$ are rigid, and $\ell\ell\,\Omega_q^{(s)}(m|n,\textbf{r})=E(s)-E_0(s)+1$.
\begin{proof}
By definition of rigidity of module, it suffices to prove that the filtration (4.22) in Theorem 23 is both socle and radical.

\text{\rm (1)} Note $\text{Soc}^0\,\Omega_q^{(s)}(m|n,\textbf{r})=0$, $\text{Soc}^1\,\Omega_q^{(s)}(m|n,\textbf{r})=\mathcal{V}_0$, by Theorem 23. Assume that we have proved $\text{Soc}^i\,\Omega_q^{(s)}(m|n,\textbf{r})=\mathcal{V}_{i-1}$, for $i\geq 1$. By Theorem 23, we see that $\mathcal{V}_i/\mathcal{V}_{i-1}=\text{Soc}\,(\Omega_q^{(s)}(m|n,\textbf{r})/\mathcal{V}_{i-1})$, that is, $\text{Soc}^{i+1}\,\Omega_q^{(s)}(m|n,\textbf{r})=\mathcal{V}_i$. So, (4.22) is a socle filtration.

(2) By Theorem 23, (4.22) is a Loewy filtration of $\Omega_q^{(s)}(m|n,\textbf{r})$, so its Loewy length $r=\ell\ell\,\Omega_q^{(s)}(m|n,\textbf{r})$ $=E(s)-E_0(s)+1$. Then for $0\leq i\leq E(s)-E_0(s)$, we have
$$\text{Rad}^i(\Omega_q^{(s)}(m|n,\textbf{r}))\subseteq \text{Soc}^{r-i}(\Omega_q^{(s)}(m|n,\textbf{r}))=\mathcal{V}_{E(s)-E_0(s)-i}.$$

For $i=1$: if  $\exists$  $0\neq y\in \mathcal{V}_{E(s)-E_0(s)-1}$, and $y\notin \text{Rad}^1(\Omega_q^{(s)}(m|n,\textbf{r}))$, by definition, there is a maximal proper submodule $\mathcal{V}\subset \Omega_q^{(s)}(m|n,\textbf{r})$ such that $y\notin \mathcal{V}$. Since $\mathcal{V}$ is maximal, $\mathcal U_q.\, y+\mathcal{V}=\Omega_q^{(s)}(m|n,\textbf{r})=\sum_{|\langle \alpha,\mu\rangle|=E(s)}\mathcal U_q.\, x^{\langle \alpha,\mu\rangle}$, by Theorem 16 (3). However, $\text{Edeg}\,u.\, y\leq \text{Edeg}\,y=E(s)-1$, so we get that $\{x^{\langle \alpha,\mu\rangle}\in \Omega_q^{(s)}(m|n,\textbf{r})\mid \text{Edeg}\,x^{\langle \alpha,\mu\rangle}=E(s)\,\}\subseteq \mathcal{V}$. Hence, $\mathcal{V}=\Omega_q^{(s)}(m|n,\textbf{r})$, it is contrary to the assumption above. This means $\text{Rad}^1(\Omega_q^{(s)}(m|n,\textbf{r}))=\mathcal{V}_{E(s)-E_0(s)-1}$. 

Assume we have proved that $\text{Rad}^i(\Omega_q^{(s)}(m|n,\textbf{r}))=\mathcal{V}_{E(s)-E_0(s)-i}$, for $i\geq 1$. Notice that $\text{Rad}^{i+1}(\Omega_q^{(s)}(m|n,\textbf{r}))\subseteq \mathcal{V}_{E(s)-E_0(s)-i-1}\subset \mathcal{V}_{E(s)-E_0(s)-i}=\text{Rad}^i(\Omega_q^{(s)}(m|n,\textbf{r}))$. By definition, $\text{Rad}^{i+1}(\Omega_q^{(s)}(m|n,\textbf{r}))$ is the intersection of all maximal submodules of $\text{Rad}^i(\Omega_q^{(s)}(m|n,\textbf{r}))$. By Theorem 23 (2), we have $\mathcal{V}_{E(s)-E_0(s)-i}$ is spanned by $\{\,x^{\langle \alpha,\mu\rangle}\in (\Omega_q^{(s)}(m|n,\textbf{r}))\mid \text{Edeg}\,x^{\langle \alpha,\mu\rangle}=E(s)-i\,\}$. Using a similar argument as for $i=1$, we get $\text{Rad}^{i+1}(\Omega_q^{(s)}(m|n,\textbf{r}))=\mathcal{V}_{E(s)-E_0(s)-i-1}$.

Consequently, the filtration (4.22) is a radical filtration.
\end{proof}
\end{theorem}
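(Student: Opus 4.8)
The plan is to exploit the general fact (already quoted in the excerpt, see 8.14 of \cite{Hum1}) that for any module $M$ of Loewy length $r$ one has $\mathrm{Rad}^{r-k}M\subseteq \mathrm{Soc}^k\,M$, so that rigidity amounts precisely to showing these inclusions are equalities, i.e.\ the socle and radical filtrations agree. Since Theorem 23 already establishes that (4.22)
\[
0\subset \mathcal V_0\subset\mathcal V_1\subset\cdots\subset \mathcal V_{E(s)-E_0(s)}=\Omega_q^{(s)}(m|n,\textbf{r})
\]
is \emph{a} Loewy filtration with semisimple layers, its length is the Loewy length $r=E(s)-E_0(s)+1$, giving the stated formula immediately. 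It then suffices to identify (4.22) as both the socle filtration and the radical filtration.

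First I would treat the socle side by upward induction on $i$. The base $\mathrm{Soc}^0=0$ and $\mathrm{Soc}^1=\mathrm{Soc}\,\Omega_q^{(s)}(m|n,\textbf{r})=\mathcal V_0$ are exactly Theorem 16(2)/Theorem 17. For the inductive step, assuming $\mathrm{Soc}^i\,\Omega_q^{(s)}(m|n,\textbf{r})=\mathcal V_{i-1}$, the definition of the socle series gives $\mathrm{Soc}^{i+1}/\mathrm{Soc}^i=\mathrm{Soc}\bigl(\Omega_q^{(s)}(m|n,\textbf{r})/\mathcal V_{i-1}\bigr)$, and Theorem 23 identifies the latter with $\mathcal V_i/\mathcal V_{i-1}$; hence $\mathrm{Soc}^{i+1}=\mathcal V_i$. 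So (4.22) is the socle filtration.

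Next I would handle the radical side by downward induction on $i$, using the energy-grade machinery. The inclusion $\mathrm{Rad}^i\subseteq \mathrm{Soc}^{r-i}=\mathcal V_{E(s)-E_0(s)-i}$ is automatic from the general inequality. For the reverse, I argue as in the $i=1$ case spelled out in the excerpt: if some nonzero $y\in\mathcal V_{E(s)-E_0(s)-i}$ (more precisely, in the appropriate ambient module $\mathrm{Rad}^{i-1}=\mathcal V_{E(s)-E_0(s)-i+1}$) fails to lie in $\mathrm{Rad}^i$, pick a maximal submodule $\mathcal V$ of $\mathrm{Rad}^{i-1}$ avoiding $y$; maximality forces $\mathcal U_q.\,y+\mathcal V=\mathrm{Rad}^{i-1}$, which by Theorem 16(3) (applied inside the relevant layer) is generated by the classes of monomials of top energy grade $E(s)-i+1$ in that module. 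But $\mathrm{Edeg}(u.\,y)\le \mathrm{Edeg}(y)=E(s)-i$ for all $u\in\mathcal U_q$ by Lemma 9, so none of those top-grade generators can appear with nonzero coefficient in $\mathcal U_q.\,y$; hence all of them already lie in $\mathcal V$, forcing $\mathcal V=\mathrm{Rad}^{i-1}$, a contradiction. Thus $\mathrm{Rad}^i=\mathcal V_{E(s)-E_0(s)-i}$, completing the induction and showing (4.22) is also the radical filtration, whence $\Omega_q^{(s)}(m|n,\textbf{r})$ is rigid.

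The main obstacle I anticipate is making the radical-side argument fully precise: one must apply the ``top energy grade generates'' statement (Theorem 16(3)) not to the whole module but to each successive quotient $\mathrm{Rad}^{i-1}/\mathrm{Rad}^i$-type subquotient along the chain, keeping careful track of which energy grades survive in $\mathcal V_{E(s)-E_0(s)-i+1}/\mathcal V_{E(s)-E_0(s)-i}$, and verifying that Lemma 9's monotonicity of $\mathrm{Edeg}$ really does obstruct a grade-$(E(s)-i)$ element from generating a grade-$(E(s)-i+1)$ class modulo the next term. The semisimplicity and ``maximality'' of each layer from Theorem 23, together with Theorem 14(i)\&(iii) guaranteeing that distinct equivalence classes contribute independent simple summands, are exactly what is needed to run this bookkeeping cleanly; everything else is a routine induction.
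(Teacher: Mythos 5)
Your proposal is correct and follows essentially the same route as the paper's proof: identify (4.22) as the socle filtration by upward induction via Theorem 23, and as the radical filtration by downward induction via the maximal-submodule argument combining Lemma 9's monotonicity of $\text{Edeg}$ with the top-energy-grade generation from Theorem 16 (3). The ``obstacle'' you flag at the end is exactly the step the paper handles by invoking Theorem 23 (2) to describe $\mathcal{V}_{E(s)-E_0(s)-i}$ inside each inductive stage, so nothing further is needed.
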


\begin{coro}
Suppose $m\ge 2$, and $\textbf{char}(q)=\ell\geq 3$. Then for any $s\in\mathds N$, the $\mathcal U_q$-submodules $\Omega_q^{(s)}(m|n)$ of $\Omega_q(m|n)$ are indecomposable and rigid.
\begin{proof}
Since for any $s\in \mathds{N}$, there exists some $r\in \mathds{N}$ such that $(r{-}1)\ell\leq s\leq r\ell{-}1$, then $\Omega_q^{(s)}(m|n,\textbf{r})=\Omega_q^{(s)}(m|n)$. By Theorems 23 \& 26, $\Omega_q^{(s)}(m|n)$ is indecomposable and rigid $\mathcal U_q$-module.
\end{proof}
\end{coro}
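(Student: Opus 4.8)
The plan is to bootstrap from the truncated modules $\Omega_q^{(s)}(m|n,\mathbf r)$, whose indecomposability and rigidity are already established (Theorems 17, 23, 26), by choosing the truncation parameter so large that truncation is invisible in homogeneous degree $s$. The first step is the elementary observation that every monomial basis element $x^{(\alpha)}\otimes x^\mu$ of $\Omega_q^{(s)}(m|n)$ satisfies $|\,\alpha\,|\le|\,\langle\alpha,\mu\rangle\,|=s$, hence $\alpha_i\le s$ for every $i\in I_0$. Thus, given $s\in\mathbb N$, if I pick $r\in\mathbb N$ with $r\ell-1\ge s$ (for instance $r=s+1$, or the minimal $r$ with $(r-1)\ell\le s\le r\ell-1$), then automatically $\alpha\le\mathbf r=(r\ell-1,\dots,r\ell-1)$, so every degree-$s$ monomial of $\Omega_q(m|n)$ already lies in $\Omega_q(m|n,\mathbf r)$. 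Since the $\mathcal U_q$-action of Proposition 4 preserves total degree and therefore can never push a coordinate $\alpha_i$ past $s\le r\ell-1$, this gives an identification $\Omega_q^{(s)}(m|n,\mathbf r)=\Omega_q^{(s)}(m|n)$ of $\mathcal U_q$-modules, not merely of graded subspaces.

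The second step is then purely a matter of quoting the structure theory at this value of $r$. By Theorem 17 (cases (A)--(C)), together with the net-like weave-lifting analysis of Subsection 4.4 (Proposition 20) and the Loewy filtration of Theorem 23, the module $\Omega_q^{(s)}(m|n,\mathbf r)$ is indecomposable for every $s$; and by Theorem 26 it is rigid, with Loewy length $E(s)-E_0(s)+1$. Transporting these two properties back along the equality established in the first step immediately yields that $\Omega_q^{(s)}(m|n)$ is indecomposable and rigid, which is exactly the assertion of the corollary.

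I do not expect any genuine obstacle here: the corollary is \emph{genuinely} a corollary, in that all the substantive work --- the energy-grade stratification, the weave-lifting argument for indecomposability, and the coincidence of the socle and radical filtrations for rigidity --- has already been carried out in the truncated setting. The only point needing (minimal) care is to make the identification $\Omega_q^{(s)}(m|n,\mathbf r)=\Omega_q^{(s)}(m|n)$ fully explicit, i.e. to record that the constraint $\alpha\le\mathbf r$ is vacuous on degree-$s$ elements and is preserved by the $\mathcal U_q$-action; once that is noted, the proof collapses to a one-line reduction.
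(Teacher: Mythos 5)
Your proposal is correct and follows exactly the paper's own argument: pick $r$ with $s\le r\ell-1$ so that the truncation constraint is vacuous in degree $s$, identify $\Omega_q^{(s)}(m|n,\mathbf r)=\Omega_q^{(s)}(m|n)$ as $\mathcal U_q$-modules, and quote Theorems 23 and 26. Your extra remark that the $\mathcal U_q$-action preserves total degree (so the identification is one of modules, not just of graded subspaces) is a worthwhile clarification that the paper leaves implicit, but it does not change the route.
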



\section{Quantum Grassmann superalgebra and quantum de Rham cohomology}

\subsection{Quantum exterior superalgebra $\sqcap_q(m|n)$} 
Define the quantum exterior superalgebra $\sqcap_q(m|n)$ as follows.

\begin{defi}
The quantum exterior superalgebra $\sqcap_q(m|n)$ is the superalgebra generated by $d\xi_i$, $i\in I$, with relations
\begin{equation}
d\xi_i^2=0, \quad {\it for\  } \ i\in I, \quad {\it and } \quad d\xi_j\,\wedge_qd\xi_i=-q\,d\xi_i\,\wedge_qd\xi_j,  \quad \textit{ for } \ i, \,j\in I, \ j>i.
\end{equation}
Denote $|\,\mu\,|:=|\,d\xi^\mu\,|=\sum\mu_i$,\ $|\,\nu\,|:=|\,d\xi^{\nu}\,|=\sum\nu_i$, let $\sqcap_q(m|n)^{(s)}$ be the $s$-th homogeneous subspace of $\sqcap_q(m|n)$, that is,
\begin{equation}
\sqcap_q(m|n)^{(s)}=\text{Span}_\mathbb{K}\left.\left\{\,d\xi^\mu\, \wedge_q d\xi^{\nu}\, \right|\, \mu\in \mathds{Z}_2^m,\ \nu\in \mathds{Z}_2^n,\, |\,\mu\,|+|\,\nu\,|=s\,\right\},
\end{equation}
where the dual space $V^*=\text{Span}_{\mathbb K}\bigl\{\,d\xi_i\,\mid i\in I\,\bigr\}$ is the dual module of the natural 
$\mathcal U_q$-module $V=\text{Span}_{\mathbb K}\,\bigl\{\,x_i\,\mid i\in I\,\bigr\}$ by defining $d\xi_i(x_j)=\delta_{ij}$, and 
$(u.\,d\xi_i)(x_j)=d\xi_i\,(S(u).\,x_j)$, for any  $u\in \mathcal U_q$.

As an associative $\mathbb{K}$-superalgebra, the multiplication of $\sqcap_q(m|n)$ is given as follows.
\begin{equation}
(d\xi^\mu\wedge_q d\xi^{\nu})\wedge_q(d\xi^{\mu'}\wedge_q d\xi^{\nu'})
=(-q)^{\nu*\mu'+\mu*\mu'+\nu*\nu'}d\xi^{\mu+\mu'}\wedge_q d\xi^{\nu+\nu'}
\end{equation}
where $\mu,\,\mu'\in \mathds{Z}_2^m$,\ $\nu,\,\nu'\in \mathds{Z}_2^n$.
\end{defi}

\subsection{$q$-Differentials of type I over $\Omega_q(m|n)$ and quantum de Rham complex $\mathcal D_q(m|n)^{(\bullet)}$} 
Now we introduce the graded $q$-differentials $d$ of type I on $\Omega_q(m|n)$.
\begin{defi}
The $0$-degree $q$-differential $d^0$ on $\Omega_q(m|n)$ is the linear map
$$
d^0:\ \Omega_q(m|n)\longrightarrow \Omega_q(m|n)\otimes _\mathbb{K}\sqcap_q(m|n)^{(1)}
$$ 
defined by 
\begin{equation}
d^0(x^{(\alpha)}\otimes x^\nu)=\sum \limits_{i=1}^{m}q^{\varepsilon_i\ast\alpha}x^{(\alpha-\varepsilon_i)}\otimes x^\nu\otimes d\xi_i+\sum\limits_{i=m+1}^{m+n}q^{|\alpha|}q^{\varepsilon_i\ast\nu}\delta_{\nu_i,1}\bigl(x^{(\alpha)}\otimes x^{\nu-\varepsilon_i}\bigr)\otimes d\xi_i.
\end{equation}
In particular, $d^0(1)=0$, and $d^0(\xi_i)=1\otimes d\xi_i\equiv d\xi_i$.

The $q$-differentials $d^s$ of higher degrees $s\,(>0)$ are the following linear maps:
$$
d^s: \ \Omega_q(m|n)\otimes _\mathbb{K}\sqcap_q(m|n)^{(s)}\longrightarrow \Omega_q(m|n)\otimes _\mathbb{K}\sqcap_q(m|n)^{(s+1)},
$$ 
defined by
\begin{equation}
\begin{split}
d^s\bigl((&x^{(\alpha)}\otimes x^\nu)\otimes (d\xi^{\mu'}\wedge_q d\xi^{\nu'})\bigr) \\ &=\sum\limits_{j=1}^{m}q^{\varepsilon_j\ast\alpha}(x^{(\alpha-\varepsilon_j)}\otimes x^\nu)\otimes d\xi_j\wedge_q (d\xi^{\mu'}\wedge_q d\xi^{\nu'}) \\
&\quad +\sum\limits_{j=m+1}^{m+n}q^{|\alpha|}q^{\varepsilon_j\ast\nu}\delta_{\nu_j,1}(x^{(\alpha)}\otimes x^{\nu-\varepsilon_j})\otimes dx_j\wedge_q (d\xi^{\mu'}\wedge_q d\xi^{\nu'})\\
&=\sum\limits_{j=1}^{m}q^{\varepsilon_j\ast\alpha}\delta_{\mu'_j,0}(-q)^{\varepsilon_j\ast\mu'}
(x^{(\alpha-\varepsilon_j)}\otimes x^\nu)\otimes (d\xi^{\mu'+\varepsilon_j}\wedge_q d\xi^{\nu'}) \\
&\quad +\sum\limits_{j=m+1}^{m+n}q^{|\alpha|}q^{\varepsilon_j\ast\nu}(-q)^{\varepsilon_j\ast(\mu'+\nu')}\delta_{\nu_j,1}\delta_{\nu'_j,0}(x^{(\alpha)}\otimes x^{\nu-\varepsilon_j})\otimes  (d\xi^{\mu'}\wedge_q d\xi^{\nu'+\varepsilon_j}).
\end{split}
\end{equation}
\end{defi}

Set $\mathcal D_q(m|n)^{(\bullet)}:=\Omega_q(m|n)\otimes \sqcap_q(m|n)^{(\bullet)}$. Then we have 
\begin{theorem}
$\bigl(\,\mathcal D_q(m|n)^{(\bullet)}, d^\bullet\,\bigr)$ is a quantum de Rham cochain complex $($of type I\,$)$, that is, $d^{s+1}d^s=0$, for\, $0\le s\le m+n$.
\end{theorem}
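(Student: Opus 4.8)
The plan is to verify $d^{s+1}\circ d^s = 0$ by a direct computation on a basis element $(x^{(\alpha)}\otimes x^\nu)\otimes(d\xi^{\mu'}\wedge_q d\xi^{\nu'})$, reducing the vanishing of the composite to a collection of sign/power-of-$q$ identities among the structure constants appearing in Definition~30. First I would apply $d^s$ to the basis element to obtain a sum over an index $j$ (split into the ``even'' range $1\le j\le m$ and the ``odd'' range $m+1\le j\le m+n$), then apply $d^{s+1}$ to each resulting term, producing a double sum over indices $j$ and $k$. Because of the relations $d\xi_i^2=0$ in $\sqcap_q(m|n)$ (Definition~29), every diagonal term $j=k$ dies automatically; the content of the theorem is therefore that, for each fixed unordered pair $\{j,k\}$ with $j\ne k$, the coefficient of the wedge-monomial with both $d\xi_j$ and $d\xi_k$ inserted is zero. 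So the proof amounts to: (i) collect, for each such pair, the two contributions (insert $j$ first then $k$, versus $k$ first then $j$); (ii) show they are negatives of one another.

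The mechanism that forces cancellation is the $q$-antisymmetry $d\xi_k\wedge_q d\xi_j = -q\, d\xi_j\wedge_q d\xi_k$ for $j<k$, interacting with the exponents $\varepsilon_j\ast\alpha$, $\varepsilon_j\ast\nu$, $\varepsilon_j\ast\mu'$, $\varepsilon_j\ast\nu'$ and the factor $q^{|\alpha|}$ that decorates each insertion. Concretely I would fix $j<k$ and compare: in one order we first lower $\alpha$ (or $\nu$) at slot $j$ and wedge in $d\xi_j$, then do the same at slot $k$ on the already-modified exponent; in the other order we do $k$ then $j$. The key bookkeeping facts are that $\varepsilon_k\ast(\alpha-\varepsilon_j)=\varepsilon_k\ast\alpha$ and $\varepsilon_j\ast(\alpha-\varepsilon_k)=\varepsilon_j\ast\alpha - (\text{correction})$ — more precisely, because $\beta\ast\gamma=\sum_{p<q}\gamma_p\beta_q$ (note the order in the paper's definition), the bi-additivity of $\ast$ gives $\varepsilon_j\ast\varepsilon_k$ and $\varepsilon_k\ast\varepsilon_j$ differing according to whether $j<k$, and these are exactly the exponents that combine with the $-q$ from reordering the two $d\xi$'s. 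A careful tracking shows the two orderings contribute $\pm q^{(\text{same power})}$ times the same monomial, so they cancel. The three cases to handle are: both $j,k\le m$; both $j,k>m$ (here the extra $q^{|\alpha|}$ appears twice and the $\delta_{\nu_j,1}$-type constraints must be checked for consistency in both orders); and the mixed case $j\le m<k$ (where one insertion carries $q^{|\alpha|}$ and one does not, and one must check that lowering $\alpha$ at slot $j$ changes $|\alpha|$ by exactly $1$, accounting for the discrepancy).

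The main obstacle I anticipate is purely organizational rather than conceptual: getting every exponent right in the mixed even/odd case, where the bookkeeping conventions collide — the factor $q^{|\alpha|}$ in the odd-index terms is sensitive to whether an even-index lowering has already been performed, the parities in $\mu\ast\nu$-type products shift, and the Koszul-type signs $(-q)^{\varepsilon_j\ast(\mu'+\nu')}$ must be expanded using bi-additivity of $\ast$ over $\mu'+\nu'$. I would therefore organize the computation as a lemma isolating, for a pair $j<k$, the identity
\[
q^{\varepsilon_k\ast\alpha}\,q^{\varepsilon_j\ast(\alpha-\varepsilon_k)}\,(-q)^{\varepsilon_j\ast(\mu'+\varepsilon_k)}\,(-q)^{\varepsilon_k\ast\mu'}
= -\,q^{\varepsilon_j\ast\alpha}\,q^{\varepsilon_k\ast(\alpha-\varepsilon_j)}\,(-q)^{\varepsilon_k\ast(\mu'+\varepsilon_j)}\,(-q)^{\varepsilon_j\ast\mu'}
\]
(and its odd/mixed analogues), which reduces after cancelling common factors to a one-line check that $\varepsilon_k\ast\varepsilon_j - \varepsilon_j\ast\varepsilon_k = 1$ for $j<k$ — precisely the source of the $-q$ in the defining relation of $\sqcap_q(m|n)$. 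Once this lemma is in place, summing over all pairs $\{j,k\}$ and over the (automatically vanishing) diagonal gives $d^{s+1}d^s=0$, uniformly for $0\le s\le m+n$.
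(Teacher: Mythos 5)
Your proposal is correct and follows essentially the same route as the paper: the paper factors $d^s=\sum_{j}\,_j\partial\otimes d\xi_j$ and reduces everything to the $q$-commutation $_j\partial\circ\,_i\partial=q^{\varepsilon_i\ast\varepsilon_j-\varepsilon_j\ast\varepsilon_i}\,_i\partial\circ\,_j\partial=q^{-1}\,_i\partial\circ\,_j\partial$ for $j>i$, which against the relation $d\xi_j\wedge_q d\xi_i=-q\,d\xi_i\wedge_q d\xi_j$ (plus $d\xi_i^2=0$ for the diagonal) is exactly your pairwise cancellation identity. Your one-line check $\varepsilon_k\ast\varepsilon_j-\varepsilon_j\ast\varepsilon_k=1$ for $j<k$ is precisely the computation the paper performs, so the two arguments differ only in organization.
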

\begin{proof}
According to formula (5.5), we can rewrite $d^s$ into the summation of tensor operators of quantum differential operators $_j\partial$ acting on $\Omega_q(m|n)$ and the left multiplication operators $d\xi_j$ of $\sqcap_q(m|n)$ for $j\in I$ as follows:
$$
d^s=\sum_{j\in I}
\,_j\partial\otimes d\xi_j, 
$$
where for $x^{(\alpha)}\otimes x^\nu\in \Omega_q(m|n)$, we define quantum differential operators:
\begin{equation}
_i\partial\,(x^{(\alpha)}\otimes x^\nu)=
\begin{cases}
q^{\varepsilon_i\ast\langle\alpha,\nu\rangle}\,x^{(\alpha-\varepsilon_i)}\otimes x^\nu, \quad &\textit{ for } \ i\in I_0,\\
q^{\varepsilon_i\ast\langle\alpha,\nu\rangle}\,x^{(\alpha)}\otimes x^{\nu-\varepsilon_i}, \quad &\textit{ for } \ i\in I_1.
\end{cases}
\end{equation}
So, it is easy to know that 
\begin{equation*}
\begin{split}
(_j\partial\,\circ\,_i\partial)\,(x^{(\alpha)}\otimes x^\nu)&=
\begin{cases}
q^{\varepsilon_i\ast\langle\alpha,\nu\rangle+\varepsilon_j\ast\langle\alpha{-}\varepsilon_i,\nu\rangle}
    \,x^{(\alpha-\varepsilon_i-\varepsilon_j)}\otimes x^\nu\quad &\textit{ for } \ i,\, j\in I_0,\\
q^{\varepsilon_i\ast\langle\alpha,\nu\rangle+\varepsilon_j\ast\langle\alpha{-}\varepsilon_i,\nu\rangle}
    \,x^{(\alpha-\varepsilon_i)}\otimes x^{\nu-\varepsilon_j}, \quad &\textit{ for } \ i\in I_0, \ j\in I_1,\\
  q^{\varepsilon_i\ast\langle\alpha,\nu\rangle+\varepsilon_j\ast\langle\alpha,\nu-\varepsilon_i\rangle}
    \,x^{(\alpha-\varepsilon_j)}\otimes x^{\nu-\varepsilon_i}, \quad&\textit{ for } \ i\in I_1,\ j\in I_0,\\
    q^{\varepsilon_i\ast\langle\alpha,\nu\rangle+\varepsilon_j\ast\langle\alpha,\nu-\varepsilon_i\rangle}
    \,x^{(\alpha)}\otimes x^{\nu-\varepsilon_i-\varepsilon_j}, \quad&\textit{ for } \ i,\,j\in I_1,
\end{cases}  \\
&=q^{\varepsilon_i\ast\varepsilon_j-\varepsilon_j\ast\varepsilon_i}(_i\partial\,\circ\,_j\partial)\,(x^{(\alpha)}\otimes x^\nu).
        \end{split}
\end{equation*}
So, we get $_j\partial\,\circ\,_i\partial=q^{\varepsilon_i\ast\varepsilon_j-\varepsilon_j\ast\varepsilon_i}\,_i\partial\,\circ\,_j\partial$.  Note that $\varepsilon_i\ast\varepsilon_j=0$, $\varepsilon_j\ast\varepsilon_i=1$, for $i<j$. Thereby,
we obtain
$$
_j\partial\,\circ\,_i\partial=q^{-1}\,_i\partial\,\circ\,_j\partial, \qquad\textit{for } \, j>i.
$$
Combining with (5.1), one can check: $d^{s+1}d^s=0$.

This completes the proof.
\end{proof}

From now on, we view $\mu=(\mu_1,\cdots,\mu_n)\in \mathds{Z}_2^n$ as $\mu=(0,\cdots,0;\mu_1,\cdots,\mu_n)$. For $m<i<m+n$, put $\mu+\varepsilon_i-\varepsilon_{i+1}=(0,\cdots,0;\mu_1,\cdots,\mu_i+1,\mu_{i+1}-1,\cdots,\mu_n)$, in brief.

%
%

\smallskip
\subsection{Quantum de Rham subcomplex $(\mathcal{D}_q(m|n,\mathbf {r})^{(\bullet)}, d^\bullet)$ and its de Rham cohomologies}
In the case when $q$ is a root of unity of order $\ell$, one can introduce the following
\begin{defi}
Assume that $q$ is a root of unity of order $\ell$, one can define the truncated super-subspaces 
of $\mathcal D_q(m|n)$ as follows.
\begin{equation}
\mathcal{D}_q(m|n,\mathbf {r})=\text{Span}_\mathbb{K}\left.\left\{\,(x^{(\alpha)}\otimes x^\nu)\otimes (d\xi^{\mu'}\wedge_q d\xi^{\nu'})\in \mathcal{D}_q(m|n)\ \right|\  \alpha\le  \mathbf{r}\,\right\}.
\end{equation}
These are submodules of $\mathcal{D}_q(m|n)$. In particular, $\mathcal{D}_q(m|n,\mathbf {1})$ is a subalgebra of $\mathcal{D}_q(m|n)$.

Set $\mathcal{D}_q(m|n,\mathbf{r})^{(s)}=\bigl(\,\Omega_q(m|n)\otimes \sqcap_q(m|n)_{(s)}\bigr)\cap \mathcal{D}_q(m|n,\mathbf {r}\,)$, then 
$\mathcal{D}_q(m|n,\mathbf {r})=\bigoplus\limits_{s}\,\mathcal{D}_q(m|n,\mathbf {r})^{(s)}$.
One sees that $d^s$ sends  $\mathcal{D}_q(m|n,\mathbf {r})^{(s)}$ to $\mathcal{D}_q(m|n,\mathbf {r})^{(s+1)}$, so get a {\bf quantum de Rham cochain subcomplex:}
$\bigl(\,\mathcal{D}_q(m|n,\mathbf {r})^{(\bullet)}, d^\bullet\,\bigr)$ with $d^s=d^s\Big|_{\mathcal{D}_q(m|n,\mathbf {r})^{(s)}}$ 
\begin{equation}
0\rightarrow \mathcal{D}_q(m|n,\mathbf{r})^{(0)}\overset{d^0}\rightarrow\cdots\rightarrow\mathcal{D}_q(m|n,\mathbf{r})^{(s)}\overset{d^s}\longrightarrow \mathcal{D}_q(m|n,\mathbf{r})^{(s+1)}\rightarrow\cdots\rightarrow\mathcal{D}_q(m|n,\mathbf{r})^{(m+n)}\overset{d^{m+n}}\rightarrow0.  
\end{equation}
\end{defi}

In order to describe the cohomology groups of the quantum de Rham cochain subcomplex (5.6), let us define the super-weight and the super-weight subspace.
\begin{defi}
For $\bigl(x^{(\alpha)}\otimes x^{\nu}\bigr)\otimes \bigl(d\xi^{\mu'}\wedge_q d\xi^{\nu'}\bigr)\in \mathcal D_q(m|n, \mathbf r)^{(s)}$ with $\lambda=\langle\alpha, \nu\rangle\in\mathbb Z_+^m\times \mathds Z_2^n$, and $\mu=\langle\mu', \nu'\rangle\in\mathds Z_2^{m+n}$ and $|\mu|=|\,\mu'\,|+|\,\nu'\,|=s$, define its super-weight in  $\mathbb Z_+^m\times \mathds Z_2^n$as follows\,$:$
\begin{equation}
\textbf{swt}\Bigl(\bigl(x^{(\alpha)}\otimes x^{\nu}\bigr)\otimes \bigl(d\xi^{\mu'}\wedge_q d\xi^{\nu'}\bigr)\Bigr)=\textbf{wt}\Bigl(x^{(\alpha)}\otimes x^{\nu}\Bigr)-\textbf{wt}\Bigl(d\xi^{\mu'}\wedge_q d\xi^{\nu'}\Bigr)=\langle\alpha, \nu\rangle+\langle\mu', \nu'\rangle,
\end{equation}
where $\textbf{wt}\Bigl(d\xi^{\mu'}\wedge_q d\xi^{\nu'}\Bigr)=-\langle \mu', \nu'\rangle$ due to definition of $d\xi_j$ in Definition 29, and we consider $\mu_i$'s as $0$ or $1$ in $\mathbb Z_+$.

By abuse of notation, let us write, for $\tau$ with $\text{supp}(\tau)\cap\text{supp}(\nu)=\emptyset,$
$$
\Bigl(\bigl(x^{(\alpha-\mu')}{\otimes}\, x^{\nu-\nu'}\bigr){\otimes}\bigl(d\xi^{\mu'}\wedge_q  d\xi^{\nu'}\bigr)\Bigr){\odot}\bigr(x^{\tau}{\otimes}\, d\xi^{\tau}\bigr):=\bigl(x^{(\alpha-\mu')}{\otimes}\, x^{\nu-\nu'+\tau}\bigr){\otimes} \bigl(d\xi^{\mu'}\wedge_q  d\xi^{\nu'+\tau}\bigr).
$$
So, for any super-weight $\lambda=\langle\alpha,\nu\rangle\in \mathbb{Z}_+^m\times\mathds{Z}_2^n$, denote by $\mathcal{D}_\lambda^{(s)}$ the super-weight subspace of $\mathcal{D}_q(m|n,\mathbf {r})^{(s)}$ (of degree $s$) as below\,$:$ 
\begin{equation}
\begin{split}
\mathcal D_\lambda^{(s)}
&=\text{Span}_\mathbb{K}\Bigl\{\,\bigl(x^{(\alpha-\mu')}{\otimes}\, x^{\nu-\nu'+\tau}\bigr){\otimes} \bigl(d\xi^{\mu'}\wedge_q  d\xi^{\nu'+\tau}\bigr)\in \mathcal D_q(m|n,\mathbf r)^{(s)}\ \Big| \ \mu'\preceq \alpha, \ \nu'\preceq\nu,
  \\
&\quad    \ \text{supp}(\tau)\cap\text{supp}(\nu)=\emptyset,  \ s=|\,\mu'\,|+|\,\nu'\,|+|\,\tau\,|, \ 0\le |\,\tau\,|=t\le \min\{s,n{-}|\,\nu\,|\}\,\Bigr\}\\
&=\text{Span}_\mathbb{K}\Bigl\{\,\Bigl(\bigl(x^{(\alpha-\mu')}{\otimes}\, x^{\nu-\nu'}\bigr){\otimes}\bigl(d\xi^{\mu'}\wedge_q  d\xi^{\nu'}\bigr)\Bigr){\odot}\Bigr(x^{\tau}{\otimes}\, d\xi^{\tau}\Bigr)\in \mathcal D_q(m|n,\mathbf r)^{(s)}\ \Big| \ \mu'\preceq \alpha, \\ 
&\quad  \  \nu'\preceq\nu, \ \text{supp}(\nu')\cap\text{supp}(\tau)\subseteq \nu\cap\tau=\emptyset, \ |\,\mu'\,|+|\,\nu'\,|=s-t, \ |\,\tau\,|=t\,\Bigr\},
\end{split}
\end{equation}
where $\textbf{swt}\bigl(x^{\tau}\otimes d\xi^{\tau}\bigr)=\tau+\tau=\mathbf 0\in\mathds Z_2^n$, $\mu'=\sum\limits_{j=1}^{s_1}\varepsilon_{p_j}$ with $ p_1<\cdots<p_{s_1}$, and $\nu'=\sum\limits_{j=1}^{s_2}\varepsilon_{q_j}\preceq\nu$ with $q_1<\cdots< q_{s_2}$, and $s_1+s_2=s-t$, for $0\le t\le \min\{s,n{-}|\nu|\}$.

Simplifying notations: for any super-weight $\lambda=\langle \alpha,\nu\rangle\in \mathbb Z_+^m\times \mathds Z_2^n$, set 
\begin{gather}
\lambda_i=\begin{cases}
\alpha_i, \quad &\textit{ for } \ i\in I_0,\\
\nu_i, \quad &\textit{ for } \ i\in I_1.
\end{cases}
\end{gather}
Define $k_\lambda=\sharp\{i\in I_0\mid \lambda_i=r\,\}=k_\alpha$, $h^0_\lambda=\sharp\{i\in I_0\mid \lambda_i=0\,\}=h_\alpha$, $h_\lambda=\sharp\{i\in I\mid \lambda_i=0\,\}$,
$|\nu|=\sharp\{i\in I_1\mid \nu_i=1\}$.
By abuse of notation, view $\mu\in\mathds Z_2^{m+n}$ as a subsequence of $\lambda\in \mathbb Z_+^m\times \mathds Z_2^n$ when $\mu\preceq \lambda$ (that is, each $\mu_i\le \lambda_i$ for $i\in I$), and for any $\tau\subset \mathds Z_2^n\setminus\text{supp}(\nu)$, then rewrite a quantum exterior (monomial) differential form of a fixed super-weight $\lambda$ with differential degree $s$ as
\begin{gather}
x(\lambda{-}\mu|\tau)\xi(\mu|\tau)=x^{\lambda-\mu} x^\tau \otimes d\xi^{\mu+\tau}=x^{(\alpha-\mu')}\otimes x^{\nu-\nu'+\tau}\otimes d\xi^{\mu'}\wedge_q d\xi^{\nu'}\wedge_q d\xi^\tau,
\end{gather}
with $\mu=(\mu',\nu'),\, \tau=(\mathbf0,\tau)\in\mathds Z_2^{m+n}$, and $\mu\preceq \lambda$.
\end{defi}

\begin{lemma}
For a given super-weight $\lambda=\langle\alpha,\nu\rangle\in \mathbb{Z}_+^m\times\mathds{Z}_2^n$, we have
\begin{equation}
\begin{split}
\mathcal D_\lambda^{(s)}&=\bigoplus_{t=0}^{\min\{s,n-|\nu|\}}\bigoplus_{s_1=k_\alpha}^{s-t}\mathcal D_\lambda^{\langle s_1|s-t-s_1\rangle}\odot\mathcal D_{\mathbf 0}^{\langle t\rangle}
\\
&=\Bigl\{ \,x(\lambda{-}\mu|\tau)\xi(\mu|\tau)\,\Big|\, \mu\in\mathds Z_2^{m+n}, \ |\mu|+|\tau|=s, \ \mu\preceq\lambda, \,\tau\subset\mathds Z_2^n\setminus \text{supp}\,(\nu) \Bigr\}.
\end{split}
\end{equation}
where $D_\lambda^{\langle s_1|s-t-s_1\rangle}=\text{Span}_\mathbb K\bigl\{\bigl(x\bigl(\lambda{-}\mu|\mathbf0\bigr)\,\xi\bigl(\mu|\mathbf0\bigr)\in \mathcal D_\lambda^{(s)}\,\,\bigl|\,\,\, |\,\mu'\,|=s_1\ge k_\lambda, \, \nu\succeq\nu'\,\bigr\}$, $\mathcal D_{\mathbf 0}^{\langle t\rangle}=\text{Span}_{\mathbb K}\{\,x^{\tau}\otimes d\xi^{\tau}\mid \tau\subseteq\mathds Z_2^n\setminus\text{supp}\,(\nu) , \ |\,\tau\,|=t\,\}\subset\text{Ker}\,d\,^t$, with a convention: $\mathcal D_{\mathbf 0}^{\langle 0\rangle}=\mathbb K$, and 
$$
\mathcal D_\lambda^{\langle s_1|s-t-s_1\rangle}\odot\mathcal D_{\mathbf 0}^{\langle t\rangle}=\Bigl\{ x(\lambda{-}\mu|\tau)\xi(\mu|\tau)\,\Big|\,  |\mu'|=s_1,\, |\nu'|=s-t-s_1,\, |\tau|=t, \, \mu\preceq\lambda, \,\tau\subset\mathds Z_2^n\setminus \text{supp}\,(\nu) \Bigr\}.
$$
So,
\begin{equation}
\begin{split}
\dim\,\mathcal{D}_\lambda^{(s)}&= \sum\limits_{0\le t\le \min\{s,n-|\nu|\}}\,\sum\limits_{
 k_\alpha\le s_1\le s-t}\binom{m-k_\alpha-h_\alpha}{s_1-k_\alpha}\binom{|\nu|}{s-t-s_1}\binom{n{-}|\nu|}t\\
 &=\sum_{t=0}^{\min\{s,n{-}|\nu|\}}\binom{m-k_\alpha-h_\alpha+|\nu|}{s-t-k_\alpha}\binom{n{-}|\nu|}t=\binom{m+n-k_\lambda-h^0_\lambda}{s-k_\lambda}.
 \end{split}
\end{equation}  

$\mathcal{D}_\lambda^{(s)}\neq 0$ if and only if $k_\lambda\le s\,(\le m+n-h^0_\lambda)$.
 \end{lemma}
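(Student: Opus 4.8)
The plan is to prove Lemma 34 by a direct bookkeeping argument built on the defining description of $\mathcal D_\lambda^{(s)}$ in Definition 33, organizing everything around the decomposition of a quantum exterior monomial differential form $x(\lambda{-}\mu|\tau)\xi(\mu|\tau)$ into its ``bosonic'' part $\mu'\preceq\alpha$, its ``fermionic diagonal'' part $\nu'\preceq\nu$, and its ``extra fermionic'' part $\tau\subset\mathds Z_2^n\setminus\text{supp}(\nu)$. First I would establish the first displayed equality, i.e.\ that $\mathcal D_\lambda^{(s)}$ equals the span of all such monomials with $|\mu|+|\tau|=s$, $\mu\preceq\lambda$, and $\text{supp}(\tau)\cap\text{supp}(\nu)=\emptyset$; this is essentially a rewriting of the second form of (5.8) in Definition 33, where the key point is that $\textbf{swt}(x^\tau\otimes d\xi^\tau)=\tau+\tau=\mathbf 0$ so that multiplying by the $\odot$-factor $x^\tau\otimes d\xi^\tau$ does not change the super-weight, and that the conditions $\mu'\preceq\alpha$, $\nu'\preceq\nu$ are exactly $\mu\preceq\lambda$ once $\mu=(\mu',\nu')$ is viewed inside $\lambda$. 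The internal direct-sum decomposition over $t=|\tau|$ and $s_1=|\mu'|$ then just records which monomials carry which numerical invariants; distinctness of the resulting monomial basis vectors (hence directness of the sum) is immediate from the monomial basis of $\Omega_q(m|n)\otimes\sqcap_q(m|n)$.

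Next I would compute $\dim\mathcal D_\lambda^{(s)}$ by counting the monomials in each summand $\mathcal D_\lambda^{\langle s_1|s-t-s_1\rangle}\odot\mathcal D_{\mathbf 0}^{\langle t\rangle}$. Here one must choose: $\mu'$ a subset of size $s_1$ of the ``free bosonic'' indices in $I_0$ — those $i$ with $0<\lambda_i<r$, of which there are $m-k_\alpha-h_\alpha$ (the indices with $\lambda_i=r$ are forced to be in $\mu'$ by $\mu'\preceq\alpha$ together with the truncation constraint $\alpha-\mu'\le\mathbf r$... actually by the requirement that the form lie in the truncated subspace, one checks $\mu_i'=1$ is forced precisely when $\lambda_i=r$, contributing the fixed count $k_\alpha$, which is why $s_1$ ranges from $k_\alpha$); $\nu'\preceq\nu$ of size $s-t-s_1$, giving $\binom{|\nu|}{s-t-s_1}$ choices; and $\tau$ of size $t$ inside the $n-|\nu|$ indices of $I_1$ disjoint from $\text{supp}(\nu)$, giving $\binom{n-|\nu|}{t}$. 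Summing the product over $s_1$ with the Vandermonde identity $\sum_{s_1}\binom{m-k_\alpha-h_\alpha}{s_1-k_\alpha}\binom{|\nu|}{s-t-s_1}=\binom{m-k_\alpha-h_\alpha+|\nu|}{s-t-k_\alpha}$ collapses the double sum to $\sum_t\binom{m-k_\alpha-h_\alpha+|\nu|}{s-t-k_\alpha}\binom{n-|\nu|}{t}$, and a second application of Vandermonde in $t$ (using $m-k_\alpha-h_\alpha+|\nu|+(n-|\nu|)=m+n-k_\alpha-h_\alpha=m+n-k_\lambda-h_\lambda^0$, after noting $k_\lambda=k_\alpha$ and $h^0_\lambda=h_\alpha$) yields $\binom{m+n-k_\lambda-h^0_\lambda}{s-k_\lambda}$. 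Finally, $\mathcal D_\lambda^{(s)}\neq 0$ iff this binomial coefficient is nonzero, i.e.\ iff $0\le s-k_\lambda\le m+n-k_\lambda-h^0_\lambda$, which is exactly $k_\lambda\le s\le m+n-h^0_\lambda$.

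The main obstacle, and the step I would treat most carefully, is the correct identification of which indices are \emph{forced} and which are \emph{free} in the bosonic part — in particular justifying that the lower summation bound for $s_1$ is $k_\alpha$ rather than $0$. This is where the truncation hypothesis $\alpha\le\mathbf r$ (with $\mathbf r=(r\ell-1,\dots,r\ell-1)$, so that $k_\alpha$ counts the components of the associated energy datum that are maximal) genuinely enters: for an index $i$ with $\lambda_i=r$ one needs $\mu'_i=1$ in order that the resulting form $x(\lambda-\mu|\tau)\xi(\mu|\tau)$ still lie in $\mathcal D_q(m|n,\mathbf r)^{(s)}$, so those $k_\alpha$ differentials $d\xi_i$ must already be present. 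I would spell this out by unwinding the definition of $k_\lambda=k_\alpha=\sharp\{i\in I_0\mid\lambda_i=r\}$ against the membership condition $\mu'\preceq\alpha$ in (5.8), and then everything else is the two Vandermonde convolutions, which are routine. A secondary but minor point is confirming that $h^0_\lambda=h_\alpha=\sharp\{i\in I_0\mid\lambda_i=0\}$ is indeed what appears (not $h_\lambda$, which also counts fermionic zeros); this is forced by the bookkeeping above, since the fermionic zero-indices are absorbed into the $\binom{n-|\nu|}{t}$ and $\binom{|\nu|}{s-t-s_1}$ factors and only the bosonic zeros survive as a genuine restriction in the final count.
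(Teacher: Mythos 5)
Your proposal is correct and follows essentially the same route as the paper's own (very terse) proof: the decomposition is read off from Definition 33, each summand is counted by separating the $k_\alpha$ forced bosonic indices (those with $\lambda_i$ maximal, where the truncation $\alpha-\mu'\le\mathbf r$ forces $\mu_i'=1$) from the $m-k_\alpha-h_\alpha$ free ones, the $\nu'$-part and the $\tau$-part, and two Vandermonde convolutions give the closed form and the nonvanishing criterion. The only point you leave untouched is the assertion $\mathcal D_{\mathbf 0}^{\langle t\rangle}\subset\text{Ker}\,d^{\,t}$, which follows at once from $d\xi_j\wedge_q d\xi^{\tau}=0$ for $j\in\text{supp}\,(\tau)$, exactly as the paper dismisses it "by definition."
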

\begin{proof}
For a given super-weight $\lambda=\langle\alpha,\nu\rangle$, 
 by definition, we see that as vector spaces, 
$$
 \mathcal D_\lambda^{(s)}=\bigoplus_{t=0}^{\min\{s,n-|\nu|\}}\bigoplus_{s_1=k_\alpha}^{s-t}\mathcal D_\lambda^{\langle\, s_1\,|\,s-t-s_1\,\rangle}\odot\mathcal D_{\mathbf 0}^{\langle t\rangle},
 $$ 
$\dim \mathcal D_{\mathbf 0}^{\langle t\rangle}=\binom{\min\{s,n{-}|\,\nu\,|\}}{t}$,
 $\dim\, D_\lambda^{\langle\, s_1\,|\,s-t-s_1\,\rangle}=\binom{m-k_\alpha-h_\alpha}{s_1-k_\alpha}\binom{|\nu|}{s-t-s_1}$. According to constituents of differential forms in $\mathcal D_\lambda^{(s)}$, we have
\begin{equation*}
\begin{split}
\dim\,\mathcal{D}_\lambda^{(s)}&= \sum\limits_{0\le t\le \min\{s,n-|\nu|\}}\,\sum\limits_{
 k_\alpha\le s_1\le s-t}\binom{m-k_\alpha-h_\alpha}{s_1-k_\alpha}\binom{|\nu|}{s-t-s_1}\binom{n{-}|\nu|}t\\
&=\sum_{t=0}^{\min\{s,n{-}|\nu|\}}\binom{m-k_\alpha-h_\alpha+|\nu|}{s-t-k_\alpha}\binom{n{-}|\nu|}t=\binom{m+n-k_\lambda-h^0_\lambda}{s-k_\lambda}.
 \end{split}
\end{equation*}  
So, $\mathcal{D}_\lambda^{(s)}\neq 0$ if and only if $k_\lambda\le s\le m+n-h^0_\lambda$. 

By definition, it is clear that $x^\tau\otimes d\xi^\tau\in \text{Ker}\,d\,^t$ with $t=|\tau|$. 

This completes the proof.
\end{proof}

With the notation of Definition 33 in hand, formulas of (5.4) \& (5.5), for any $x(\lambda{-}\mu|\tau)\xi(\mu|\tau)\in \mathcal D_{\lambda}^{(s)}$ with super-weight $\lambda$ and differential degree $s=|\mu|+|\tau|$, can be briefly written as 
\begin{equation}
\begin{split}
d^s\bigl(x(\lambda{-}\mu|\tau)\xi(\mu|\tau)\bigr)& = 
\sum\limits_{j=1}^{m+n}q^{\varepsilon_j\ast(\lambda-\mu)}
x(\lambda{-}\mu{-}\varepsilon_j|\tau)\,\bigl(\,\xi(\varepsilon_j)\wedge_q \xi(\mu|\tau)\,\bigr)\\
&=
\sum\limits_{j=1}^{m+n}q^{\varepsilon_j\ast(\lambda-\mu)}(-q)^{\varepsilon_j\ast\mu}
x(\lambda{-}\mu{-}\varepsilon_j|\tau)\,\xi(\mu{+}\varepsilon_j|\tau)\\
&=\sum\limits_{j=1}^{m+n}q^{\varepsilon_j\ast\lambda}(-1)^{\varepsilon_j\ast\mu}
x(\lambda{-}\mu{-}\varepsilon_j|\tau)\,\xi(\mu{+}\varepsilon_j|\tau).
\end{split}
\end{equation}

\begin{defi}
Define a super-weight $\lambda=\langle\alpha,\nu\rangle\in \mathbb Z_+^m\times\mathds Z_2^n$ to be critical if it satisfies $k_\lambda+h_\lambda=m+n$. Otherwise, it is called non-critical.  
Denote by $\text{SW}_c$ the set of critical super-weights 
\begin{gather}
SW_c=\{\,\lambda_c=\langle\, \sum_{i=1}^{s-t}r\ell\varepsilon_{j_i},\,\mathbf0\,\rangle\mid 0\le j_1<j_2<\cdots<j_{s-t}\le m,\ 0\le t\le \min\{s, n\} \,\}.
\end{gather}
Denote by $SW_{nc}$ the set of non-critical super-weights. Then $SW_c\cup SW_{nc}=\mathbb Z_+^m\times\mathds Z_2^n$, where $SW_{nc}=\{\lambda=\langle\alpha,\nu\rangle\in \mathbb Z_+^m\times\mathds Z_2^n\mid k_\lambda+h_\lambda<m+n\,\}$.
\end{defi}

\begin{theorem} The quantum de Rham cochain subcomplex $\Bigl(\mathcal{D}_q(m|n,\mathbf{r})^{(\bullet)}, d^\bullet\Bigr)$
is non-acyclic, indeed, the $s$-th quantum de Rham cohomology group
\begin{equation*}
\begin{split}
H^s_{DR}\bigl(\mathcal{D}_q(m|n,\mathbf{r})^{(\bullet)}\bigr)&=\bigoplus_{\lambda\in \mathbb Z_+^m\times\mathds Z_2^n}\text{Ker}\,d^s\,|_{\mathcal D_\lambda^{(s)}}/\text{Im}\,d^{s-1}\,|_{\mathcal D_{\lambda}^{(s-1)}}=\bigoplus_{\lambda\in SW_c}\text{Ker}\,d^s\,|_{\mathcal D_\lambda^{(s)}}/\text{Im}\,d^{s-1}\,|_{\mathcal D_{\lambda}^{(s-1)}}\\
&\cong\bigoplus_{\{\tau\}\subset I_1, \,0\le |\tau|\le \min\{s,n\}}\bigoplus\limits_{1\leq i_1<i_2\cdots<i_{s_1}\leq m}\mathbb{K}\bigl[\,x\bigl(\sum_{j=1}^{s_1}(r\ell{-}1)\varepsilon_{i_j}|\tau\bigr)\xi\bigl(\sum_{j=1}^{s_1}\varepsilon_{i_j}|\tau\bigr)\,
\bigr],
\end{split}
\end{equation*}
where $s_1+|\tau|=s$, is highly nontrivial as $\mathbb{K}$-vector spaces, and $\dim\,H^s_{DR}(\mathcal{D}_q(m|n,\mathbf{r})^{(\bullet)})=\binom{m+n}{s}$, for $0\le s\le m+n$.
\end{theorem}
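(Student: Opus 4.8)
The plan is to compute the cohomology of the truncated complex one super-weight at a time, exploiting the fact, visible from formula (5.13), that each $d^s$ preserves super-weights: $d^s\bigl(\mathcal D_\lambda^{(s)}\bigr)\subseteq \mathcal D_\lambda^{(s+1)}$. Hence the whole complex decomposes as a direct sum over $\lambda\in\mathbb Z_+^m\times\mathds Z_2^n$ of finite subcomplexes $\bigl(\mathcal D_\lambda^{(\bullet)},d^\bullet\bigr)$, and it suffices to identify, for each $\lambda$, the cohomology of this much smaller gadget; then one sums. By Lemma 34, $\dim\mathcal D_\lambda^{(s)}=\binom{m+n-k_\lambda-h^0_\lambda}{s-k_\lambda}$, so the Euler characteristic of $\bigl(\mathcal D_\lambda^{(\bullet)},d^\bullet\bigr)$ is $\sum_s(-1)^s\binom{m+n-k_\lambda-h^0_\lambda}{s-k_\lambda}$, which vanishes unless $m+n-k_\lambda-h^0_\lambda=0$, i.e. unless $k_\lambda+h^0_\lambda=m+n$. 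Since $h^0_\lambda\le h_\lambda$ with equality exactly when $\nu=\mathbf0$, this forces $\nu=\mathbf 0$ and $k_\lambda+h_\lambda=m+n$, that is, $\lambda\in SW_c$. So already at the level of Euler characteristics one sees that non-critical super-weights can only contribute if their subcomplex is non-exact with cancelling dimensions; the substance of the argument is to show non-critical subcomplexes are in fact \emph{exact}.

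For the non-critical case I would exhibit an explicit contracting homotopy on $\bigl(\mathcal D_\lambda^{(\bullet)},d^\bullet\bigr)$. Because $k_\lambda+h_\lambda<m+n$ there exists an index $j_0\in I$ with $0<\lambda_{j_0}<r\ell$ (if $j_0\in I_0$) or simply an index outside the ``frozen'' positions; using the explicit formula (5.13) for $d^s$ one builds an operator $h^s:\mathcal D_\lambda^{(s)}\to\mathcal D_\lambda^{(s-1)}$ that, roughly, ``integrates in the $j_0$-direction'' — on a monomial $x(\lambda-\mu|\tau)\xi(\mu|\tau)$ it either removes $d\xi_{j_0}$ from the exterior part and restores one power of $x_{j_0}$ (with the compensating $q$-power and sign read off from $\varepsilon_{j_0}\ast(\lambda-\mu)$ and $(-1)^{\varepsilon_{j_0}\ast\mu}$), or it kills the monomial. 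One then checks $d^{s-1}h^s+h^{s+1}d^s=\mathrm{id}$ on $\mathcal D_\lambda^{(s)}$ for every $s$, using the $q$-commutation $_j\partial\circ{}_i\partial=q^{-1}{}_i\partial\circ{}_j\partial$ from the proof of Theorem 31 together with $d\xi_{j_0}^2=0$; this is the quantum/super analogue of the standard Poincaré-lemma homotopy, and the truncation bound $\alpha\le\mathbf r$ is exactly what makes the $j_0$-coordinate ``move freely'' within the range $0,\dots,r\ell-1$ (contrast the critical case, where the surviving coordinates are pinned at $r\ell-1$, leaving no room to integrate). I would organize the bookkeeping by the decomposition $\mathcal D_\lambda^{(s)}=\bigoplus_{t}\bigl(\bigoplus_{s_1}\mathcal D_\lambda^{\langle s_1|s-t-s_1\rangle}\bigr)\odot\mathcal D_{\mathbf 0}^{\langle t\rangle}$ of Lemma 34, noting that the $\odot\,\mathcal D_{\mathbf 0}^{\langle t\rangle}$ factor consists of cocycles (stated in Lemma 34) and so can be treated as inert tensor factors: the homotopy acts only on the first factor.

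For the critical case $\lambda_c=\langle\sum_{i}r\ell\,\varepsilon_{j_i},\mathbf 0\rangle\in SW_c$, every coordinate of $\lambda_c$ is either $0$ or $r\ell$, so $q^{\varepsilon_j\ast\lambda_c}$ and the truncation interact rigidly: in $d^s\bigl(x(\lambda_c-\mu|\tau)\xi(\mu|\tau)\bigr)=\sum_j q^{\varepsilon_j\ast\lambda_c}(-1)^{\varepsilon_j\ast\mu}x(\lambda_c-\mu-\varepsilon_j|\tau)\xi(\mu+\varepsilon_j|\tau)$, the only surviving terms are those in the ``frozen'' directions $j\in\{j_1,\dots\}$ with $\mu_j=0$, together with the purely $\tau$-part which is a cocycle. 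One checks directly that after extracting the inert $\mathcal D_{\mathbf 0}^{\langle t\rangle}$ factor, the remaining subcomplex is an ordinary Koszul-type complex on the finite set of frozen coordinates whose cohomology is one-dimensional, spanned exactly by the class of $x\bigl(\sum_{j=1}^{s_1}(r\ell-1)\varepsilon_{i_j}\,\big|\,\tau\bigr)\xi\bigl(\sum_{j=1}^{s_1}\varepsilon_{i_j}\,\big|\,\tau\bigr)$; summing over the choices of $s_1$ frozen directions among $m$ and of $|\tau|=s-s_1$ positions among $I_1$ gives $\sum_{s_1}\binom{m}{s_1}\binom{n}{s-s_1}=\binom{m+n}{s}$, which is the asserted dimension. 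The main obstacle, and where the bulk of the real work lies, is verifying the homotopy identity $d^{s-1}h^s+h^{s+1}d^s=\mathrm{id}$ in the non-critical case with the correct $q$-powers and signs — the super-signs $(-1)^{\varepsilon_j\ast\mu}$ and the quantum factors $q^{\varepsilon_j\ast(\lambda-\mu)}$ must combine precisely so that the cross terms cancel and the diagonal terms sum to $1$; getting the normalization of $h$ right (including the scalar $q^{\pm}$ in front) is the delicate point, and it is here that one genuinely uses Definition 30 rather than a formal analogy with the classical de Rham complex.
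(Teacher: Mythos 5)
Your treatment of the critical weights matches the paper's (identify the surviving forms, count $\sum_{s_1}\binom{m}{s_1}\binom{n}{s-s_1}=\binom{m+n}{s}$; in fact the differential is identically zero on $\mathcal D_{\lambda_c}^{(\bullet)}$, so ``one-dimensional cohomology of a Koszul complex'' is an overcomplication --- the whole space is cohomology and the multiplicity comes from the choices of $\tau$). For the non-critical case, however, you take a genuinely different route: the paper proves exactness by induction on $s$, writing the cocycle condition as a linear system $CX=0$ indexed by words $(i_1\cdots i_s)$ and exhibiting a diagonal submatrix of size $\binom{m+n-k_\lambda-h^0_\lambda-1}{s-k_\lambda}$ in lexicographic order to bound $\operatorname{rank}C$ from below, then matching dimensions against the inductive value of $\dim\operatorname{Im}d^{s-1}$. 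Your contracting-homotopy argument, if completed, is cleaner and more conceptual: the weight-$\lambda$ subcomplex is a $q$-twisted tensor product of two-term complexes $\mathbb K\,x^{(\lambda_i)}\to\mathbb K\,x^{(\lambda_i-1)}d\xi_i$, each factor for a non-frozen index being acyclic, and the single-direction homotopy $H_{i_0}$ does satisfy $D_{i_0}H_{i_0}+H_{i_0}D_{i_0}=\mathrm{id}$ and $D_jH_{i_0}+H_{i_0}D_j=0$ for $j\neq i_0$ (the latter because $\varepsilon_j\ast\varepsilon_{i_0}+\varepsilon_{i_0}\ast\varepsilon_j=1$ forces the two scalar products to differ by exactly $-1$). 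It avoids the induction entirely and explains \emph{why} the frozen coordinates are the obstruction.

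That said, two points keep this from being a proof as written. First, the homotopy identity is the entire content of the non-critical case and you explicitly defer it; the normalization of $H_{i_0}$ and the sign cancellation in the cross terms must be written out (they do work, by the computation sketched above, but nothing in your text establishes this). Second, the Euler-characteristic motivation is incorrect: $\chi\bigl(\mathcal D_\lambda^{(\bullet)}\bigr)=(-1)^{k_\lambda}(1-1)^{m+n-k_\lambda-h^0_\lambda}$ vanishes for \emph{critical} weights too whenever $n\ge1$ (for critical $\lambda$ one has $m+n-k_\lambda-h^0_\lambda=n$, not $0$), so vanishing Euler characteristic does not single out $SW_{nc}$, and your deduction ``$k_\lambda+h^0_\lambda=m+n\Rightarrow\lambda\in SW_c$'' is only consistent because that equation has no solutions at all when $n\ge1$. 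Drop that paragraph or restate it as a consistency check only for $n=0$.
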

\begin{proof}
Since $\mathcal{D}_q(m|n,\mathbf{r})^{(s)}=\bigoplus_{\lambda=\langle\alpha,\nu\rangle\in\mathbb{Z}_+^m\times\mathds{Z}_2^n}\mathcal{D}_\lambda^{(s)}$ and each $d^s$ preserves super-weight gradings. It suffices to consider the subcomplex restricted to a super-weight $\lambda=\langle\alpha, \nu\rangle$.
$$
0\longrightarrow \mathcal{D}_\lambda^{(0)}\overset{d^0}\longrightarrow\cdots\overset{d^{s-1}}\longrightarrow\mathcal{D}_\lambda^{(s)}\overset{d^s}\longrightarrow \mathcal{D}_\lambda^{(s+1)}\overset{d^{s+1}}\longrightarrow\cdots\longrightarrow \mathcal{D}_\lambda^{(m+n)}\overset{d^{m+n}}\longrightarrow0.
$$

(I) For any critical super-weight $\lambda_c\in \text{SW}_c$, $\exists$ $t$, satisfying $0\le t\le \min\{s,n\}$, such that $\lambda_c=\langle\, \sum_{i=1}^{s-t}r\ell\varepsilon_{j_i},\,\mathbf0\,\rangle$, which corresponds to the canonical elements $x\bigl(\sum_{i=1}^{s-t}(r\ell{-}1)\varepsilon_{j_i}|\tau\bigr)\xi\bigl(\sum_{i=1}^{s-t}\varepsilon_{j_i}|\tau\bigr)$ of $\text{Ker}\,d^s$ (we call them the {\it critical 
quantum differential forms}), i.e.,
\begin{gather}
    x^{\bigl(\sum_{i=1}^{s-t}(r\ell-1)\varepsilon_{j_i}\bigr)}{\otimes} x_{g_1}\cdots x_{g_t}{\otimes} 
    d\xi_{j_1}\wedge_q \cdots\wedge_q d\xi_{j_{s-t}}\wedge_q d\xi_{g_1}\wedge_q\cdots\wedge_q d\xi_{g_t}\in
\mathcal D_{\lambda_c}^{\langle s-t|0\rangle}\odot\mathcal D_{\mathbf0}^{\langle t\rangle},
\end{gather}
which are not in the image of $d^{s-1}\,|_{\mathcal D_{\lambda_c}^{(s-1)}}$. So, 
$$
\dim\, H_{DR}^s\Bigl(\bigl(\bigoplus_{\lambda_c\in \text{SW}_c}\mathcal D_{\lambda_c})^{(\bullet)}\Bigr)=\sum_{t=0}^{\min\{s,n\}}\binom{m}{s-t}\binom{n}t=\binom{m+n}s. 
$$

(II) It suffices to show that $H_{DR}^s\bigl((\mathcal D_\lambda^{(s)})^{(\bullet)}\bigr)=0$ for any
$\lambda\in SW_{nc}$. Namely, we will prove that  
$$
\text{Im}\,d^{s-1}\,\Big|_{\mathcal{D}_\lambda^{(s-1)}}=\text{Ker}\,d^s\,\Big|_{\mathcal D_\lambda^{(s)}}, \quad \dim\,\text{Im}\,d^s\,|_{\mathcal D_\lambda^{(s)}}=\binom{m{+}n{-}k_\lambda{-}h_\lambda^0{-}1}{s{-}k_\lambda}, \quad \forall\, \lambda\in SW_{nc},\leqno{(\spadesuit)}
$$ 
by induction on $s$.

Note that $\mathbf0\in SW_c$. So, for $\lambda\in SW_{nc}$, $\lambda\ne\mathbf0$ and $\exists\,\, i_0\in I_0\cup I_1$, such that $0<\lambda_{i_0}<r$ for $i_0\le m$, or $\lambda_{i_0}=1$ for $i_0>m$.  

For $s=0=|\mu|+|\tau|$, we have $\mu=\tau=\mathbf0$, and 
 $\dim\,\mathcal D_q(m|n, \mathbf r)_\lambda^{(0)}=\dim\,\Omega_q(m|n, \mathbf r)_\lambda=1$. So by (5.14), we get
$d^0(x(\lambda|\mathbf0))=\sum \limits_{i=1}^{m+n}q^{\varepsilon_i\ast\lambda}x(\lambda{-}\varepsilon_i|\mathbf0)\,\xi(\varepsilon_i|\mathbf0)\ne 0$. So, $\dim\,\text{Im}\,d^0\,|_{\mathcal D_\lambda^{(0)}}=1$, and $\text{Ker}\,d^0\,|_{\mathcal D_\lambda^{(0)}}=0$, that is,
$H_{DR}^0\bigl(\mathcal D_\lambda^{(0)}\bigr)=0$ for any $\lambda\in SW_{nc}$.

For $s=1=|\mu|+|\tau|$, we have $\mu=\mathbf0$, $\tau=\varepsilon_j$ for $j>m$; or $\mu=\varepsilon_i$ for $i\in I$, $\tau=\mathbf0$.


$\bullet$ Case $(1|0):$ $s=1$ and $k_\lambda=0$: 



For any $\lambda=\langle\,\alpha, \nu\,\rangle\in SW_{nc}$, i.e., $k_\lambda+h_\lambda<m+n$. 
For $k_\lambda=0<s=1$, we have $\dim\,\mathcal{D}_\lambda^{(0)}=1$ and $\dim\,\mathcal{D}_\lambda^{(1)}\neq 0$, according to Lemma 34.

Assume $0\neq\sum_{j=1}^{m}a_j(x^{(\alpha-\varepsilon_j)}\otimes x^\nu)\otimes d\xi_j+\sum_{j=m+1}^{m+n}b_j(x^{(\alpha)}\otimes x^{\nu-\varepsilon_j})\otimes d\xi_j+\sum_{j=m+1}^{m+n}c_jx^{(\alpha)}\otimes x^{\nu+\varepsilon_j}\otimes d\xi_j\in \text{Ker}\,d^1|_{\mathcal D_\lambda^{(1)}}$, by (5.12), $\mathcal D_\lambda^{(1)}=\Bigl(\bigl(\mathcal D_{\lambda}^{\langle 1|0\rangle}\bigoplus\mathcal D_{\lambda}^{\langle 0|1\rangle}\bigr)\odot\mathcal D_{\mathbf 0}^{\langle 0\rangle}\Bigr)\bigoplus \,\bigl(\mathcal D_\lambda^{\langle 0|0\rangle}\odot\mathcal D_{\mathbf 0}^{\langle 1\rangle}\bigr)$, and using (5.5), we have
\begin{equation*}
\begin{split}
&d^1\Bigl(\sum\limits_{j=1}^{m}a_j(x^{(\alpha-\varepsilon_j)}\otimes x^\nu)\otimes d\xi_j+\sum\limits_{j=m+1}^{m+n}b_j(x^{(\alpha)}\otimes x^{\nu-\varepsilon_j})\otimes d\xi_j+\sum_{j=m+1}^{m+n}c_jx^{(\alpha)}\otimes x^{\nu+\varepsilon_j}\otimes d\xi_j\Bigr)\\
&=\sum\limits_{1\leq i<j\leq m}\Bigl(a_jq^{\varepsilon_i\ast(\alpha-\varepsilon_j)}-q\,a_iq^{\varepsilon_j\ast(\alpha-\varepsilon_i)}\Bigr)
\bigl(x^{(\alpha-\varepsilon_i-\varepsilon_j)}\otimes x^\nu\bigr)\otimes \bigl(d\xi_i\wedge_q d\xi_j\bigr)\\
&\ +\sum\limits_{1\leq i\leq m<j\leq m+n}\Bigl(a_iq^{|\alpha-\varepsilon_i|}q^{\varepsilon_j\ast \nu}\delta_{\nu_j,1}(-q)+b_jq^{\varepsilon_i\ast\alpha}\Bigr)\bigl(x^{(\alpha-\varepsilon_i)}\otimes x^{\nu-\varepsilon_j}\bigr)\otimes \bigl(d\xi_i\wedge_q d\xi_j\bigr)\\
&\ +\sum\limits_{m+1\leq i< j\leq m+n}\Bigl(b_jq^{|\alpha|}q^{\varepsilon_i\ast(\nu-\varepsilon_j)}\delta_{\nu_i,1}+b_i(-q)q^{|\alpha|}q^{\varepsilon_j\ast(\nu-\varepsilon_i)}\delta_{\nu_j,1}\Bigr)\bigl(x^{(\alpha)}{\otimes}\, x^{\nu-\varepsilon_i-\varepsilon_j}\bigr){\otimes}\, \bigl(d\xi_i\wedge_q d\xi_j\bigr)\\
&\ +\sum\limits_{1\leq i\le m< j\leq m+n}c_jq^{\varepsilon_i\ast\alpha} x^{(\alpha-\varepsilon_i)}\otimes x^{\nu+\varepsilon_j}\otimes d\xi_i\wedge_q d\xi_j\\
&\ +\sum\limits_{m<i<j}c_jq^{\varepsilon_i\ast(\alpha+\nu)}x^{(\alpha)}\otimes x^{\nu+\varepsilon_j-\varepsilon_i}\otimes d\xi^{\varepsilon_i+\varepsilon_j} -\sum\limits_{m<i<j}c_iq^{\varepsilon_j\ast(\alpha+\nu)}x^{(\alpha)}\otimes x^{\nu+\varepsilon_i-\varepsilon_j}\otimes d\xi^{\varepsilon_i+\varepsilon_j}\\
&=0.
\end{split}
\end{equation*}
We obtain a system of equations with indeterminates $a_i \ (1\leq i\leq m)$ and $b_i \ (m{+}1\leq i\leq m{+}n)$:
\begin{equation}
\begin{split}
a_jq^{\varepsilon_i\ast(\alpha-\varepsilon_j)}-q\,a_iq^{\varepsilon_j\ast(\alpha-\varepsilon_i)}=0,& \quad \textit{for }\ 1\leq i<j\leq m;\\
a_iq^{|\alpha-\varepsilon_i|}q^{\varepsilon_j\ast\nu}\delta_{\nu_j,1}(-q)
+b_jq^{\varepsilon_i\ast\alpha}=0, &  \quad \textit{for }\ 1\leq i\leq m<j\leq m{+}n;\\
b_jq^{|\alpha|}q^{\varepsilon_i\ast(\nu-\varepsilon_j)}\delta_{\nu_i,1}
+b_i(-q)\,q^{|\alpha|}q^{\varepsilon_j\ast(\nu-\varepsilon_i)}\delta_{\nu_j,1}=0, &  \quad \textit{for }\ m{+}1\leq i<j\leq m{+}n;\\
c_j=0, &  \quad \textit{for }\ m{+}1\leq j\leq m{+}n.
\end{split}
\end{equation}
From (5.17), we get a unique solution: $c_j=0$ for $j\in I_1$ (this implies $\text{Ker}\,d^1\,|_{\mathcal D_\lambda^{\langle0|0\rangle}\odot\mathcal D_{\mathbf0}^{\langle 1\rangle}}=0$), and $a_j=a_1q^{\varepsilon_j\ast\gamma}$ for any $1\leq j\leq m$, $b_j=a_1q^{|\alpha|}q^{\varepsilon_j*\nu}\delta_{\nu_j,1}$ and $b_j\delta_{\nu_i,1}q^{\varepsilon_i\ast\nu}=b_i\delta_{\nu_j,1}q^{\varepsilon_j\ast\nu}$, for any $m{+}1\leq i<j\leq m{+}n$, which forms exactly the image of $d^0$ acting on $\Omega(m|n, \mathbf r)_\lambda$ (see (5.4) in Definition 30). Namely, $ \text{Ker}\,d^1\mid_{\mathcal{D}_\lambda^{(1)}}\subseteq \text{Im}\,d^0\mid_{\mathcal{D}_\lambda^{(0)}}$. But as $\text{Im}\,d^0\mid_{\mathcal{D}_\lambda^{(0)}}\subseteq \text{Ker}\,d^1\mid_{\mathcal{D}_\lambda^{(1)}}$, we get
$\text{Im}\,d^0\mid_{\mathcal{D}_\lambda^{(0)}}= \text{Ker}\,d^1\mid_{\mathcal{D}_\lambda^{(1)}}$, namely, $H_{DR}^1(\mathcal D^{(\bullet)}_\lambda)=0$. Moreover, $\dim\text{Ker}\,d^1\mid_{\mathcal{D}_\lambda^{(1)}}=1$, so
\begin{equation*}
\begin{split}
\dim\,\text{Im}\,d^1(\mathcal{D}_\lambda^{(1)})&=\dim\,\mathcal{D}_\lambda^{(1)}
-\dim\,\text{Ker}\,d^1\mid_{\mathcal{D}_\lambda^{(1)}}\\
&=\dim\,\mathcal D_{\lambda}^{\langle1|0\rangle}+\dim\,\mathcal D_{\lambda}^{\langle 0|1\rangle}+\dim\,\bigl(\mathcal D_\lambda^{\langle0|0\rangle}\odot\mathcal D_{\mathbf 0}^{\langle 1\rangle}\bigr)-1\\
&=\binom{m-h_\alpha}1+\binom{|\nu|}1+\binom{n{-}|\nu|}1-1=\binom{m+n{-}h_\lambda^0{-}1}{1}.
\end{split}
\end{equation*}

$\bullet$ Case $(1|1):$ $s=1$ and $k_\lambda=1$: $\dim\mathcal D_\lambda^{(1)}=1$, by Lemma 34.


$s=s_1=1$, $t=s_2=0$. $\lambda=\langle a_{i_1}\varepsilon_{i_1}+\cdots+a_{i_j}\varepsilon_{i_2}+\cdots+a_{i_k}\varepsilon_{i_k}, \varepsilon_{i_{k+1}}+\cdots+\varepsilon_{i_q}\rangle\in SW_{nc}$, for a unique $i_j\in I_0$, such that $a_{i_j}=r\ell$ and the others less than $r$, we have 
$$
\mathcal D_\lambda^{(1)}=\mathcal D_\lambda^{\langle1|0\rangle}=\mathbb K\, x^{(a_{i_1}\varepsilon_{i_1}+\cdots+(r\ell-1)\varepsilon_{i_j}+\cdots+a_{i_k}\varepsilon_{i_k})}\otimes x^{\varepsilon_{i_{k+1}}+\cdots+\varepsilon_{i_q}}\otimes d\xi_{i_j}, 
$$ 
which is contained in the image of $d^0$. Clearly, $\dim\,\text{Im}\,d^1\,|_{\mathcal D_\lambda^{(1)}}=1$, and $\dim\,H_{DR}^1\bigl((\mathcal D_{\mathcal D_\lambda^{(1)}})^{(\bullet)}\bigr)=0$.

Based on the above two cases, we check our claim $(\spadesuit)$ for $s=1$.

Now for $s>1$, suppose that the assertions $(\spadesuit)$ are true for $s'<s$, let us consider the case $s$.

Consider the behavior of $d^{s-1}$, $d^s$ at $\mathcal D_q(m|n, \mathbf r)_\lambda^{(\bullet)}$.

For $\lambda=\langle\alpha, \nu\rangle\in SW_{nc}$, 
by the Criterion of Lemma 34, $k_\lambda<s$ if and only if $\mathcal D_\lambda^{(s-1)}\ne 0$; $k_\lambda=s$ if and only if $\mathcal D_\lambda^{(s-1)}=0$
and $\mathcal D_\lambda^{(s)}\ne 0$; $k_\lambda>s$ if and only if $\mathcal D_\lambda^{(s)}=0\,(\,=\mathcal D_\lambda^{(s-1)}\,)$, and in this case, $H^s_{DR}\bigl(\mathcal{D}^{(\bullet)}_\lambda\bigr)=0$; 
if $s> m+n-h^0_\lambda\,(\ge k_\lambda)$,  Lemma 34 still gives $\mathcal{D}_\lambda^{(s)}=0$, thus $H^s_{DR}\bigl(\mathcal{D}^{(\bullet)}_\lambda\bigr)=0$.

So, it suffices to consider the case $k_\lambda\le s\le  m+n-h^0_\lambda$.


For $0\le t\le\min\{s,n-|\nu|\}$, consider the kernel of $d^{s}\,\Big|_{\mathcal D_{\lambda}^{(s)}}$, 
for any $x(\lambda{-}\mu|\tau)\,\xi(\mu|\tau)\in \mathcal D_{\lambda}^{(s)}$ with $|\mu|+t=s, \,t=|\tau|$ ($\tau\subseteq\mathds Z_2^n\setminus \{\,\nu\,\}$), from 
$$
   d^s.\left(\sum_{\tau\subset\mathds Z_2^n\setminus\{\nu\}}\sum\limits_{\mu\preceq\lambda,\, |\mu|
=s{-}t}A_{\mu|\tau}\,x(\lambda{-}\mu|\tau)\,\xi(\mu|\tau)\right)=0,
$$
and using (5.16), we can obtain 
$$
\sum_{\tau\subset\mathds Z_2^n\setminus\{\nu\}}\sum_{|\mu|=s-t}\sum_{j=1}^{m+n}q^{\varepsilon_j\ast\lambda}(-1)^{\varepsilon_j\ast\mu}A_{\mu|\tau}\,x(\lambda{-}\mu{-}\varepsilon_j|\tau)\,\xi(\mu{+}\varepsilon_j|\tau)=0.\leqno{(\heartsuit)}
$$

Set $\mu(j)=\mu{+}\varepsilon_j$. 
Denote by $[\lambda]$ the sequence of length $m+n-h_\lambda$ composing of the increasing indexes $i$ with $\lambda_i\ne0$. Then the indexes $(\mu|\tau)$ of variables $A_{\mu|\tau}$ written as $(i_1\cdots i_s)$ are the subsequences of length $s$ of $[\,\lambda\cup (\mathds Z_2^n\setminus\text{supp}\,(\nu)\,]$ with $1\le i_1< i_2<\cdots<i_s\le m+n$. 

Let us collect those indexes subsequences together, namely we 
set 
\begin{gather}
P^s_{\lambda}=\{(i_1\cdots i_s)=(\mu|\tau)\subset [\,\lambda\cup (\mathds Z_2^n\setminus\text{supp}\,(\nu)\,]\mid x\bigl(\lambda{-}\mu|\tau\bigr)\,\xi\bigl(\mu|\tau\bigr)\ne0\,\}, \quad \text{\it and}\\
Q^{s+1}_\lambda=\{(i_1\cdots\, i_j\cdots\, i_s)=(\mu(i_j)|\tau)\subset [\,\lambda\cup (\mathds Z_2^n\setminus\text{supp}\,(\nu)\,]\mid x\bigl(\lambda{-}\mu(i_j)|\tau\bigr)\,\xi\bigl(\mu(i_j)|\tau\bigr)\ne0\,\}.
\end{gather}

Write $p=|P_\lambda^s|,\ q=|Q_\lambda^{s+1}|$. Order lexicographically the words in $P_\lambda^s$ and $Q^{s+1}_\lambda$ respectively in column to get two column vectors $P^s_\lambda,\ Q^{s+1}_\lambda$.
Thus from $(\heartsuit)$, we get
a system of $q$ linear equations with $p$ variables $A_{i_1\cdots i_s}$ $(1 \leq i_1<\cdots<i_s\leq m+n$): 
\begin{gather}
\sum\limits_{j=1}^{s+1}(-1)^{j-1}q^{\varepsilon_{i_j}\ast\lambda}A_{i_1\cdots\widehat{i_j}\cdots i_{s+1}}=0, \quad \text{\it for } \ (i_1\cdots i_{s+1})\in Q^{s+1}_\lambda \ \text{\it with } 
(i_1\cdots\widehat{i_j}\cdots i_{s+1})\in P^s_\lambda. 
\end{gather}
Write $X=(A_{i_1\cdots i_s})_{(i_1\cdots i_s)\in P^s_\lambda}$. Then we express the system above of $q$ linear equations with $p$ variables $A_{i_1\cdots i_s}$ as a matrix equation $CX=0$, where the coefficients matrix $C$ is of size $q\times p$.

When $k_\lambda\le s$, there exists a unique longest word $\imath_1\cdots \imath_{k_\lambda}$ such that $\lambda_{\imath_j}=r\ell$, for $1\leq j\leq k_\lambda$. By Lemma 34, $\imath_1\cdots \imath_{k_\lambda}$ must be a subword of any word $i_1\cdots i_s$ in $P^s_\lambda$, and each $\lambda_{i_j}\neq0$, for $1\leq j\leq s$. We set $b=\text{min}\{j\mid \lambda_j\neq 0\}$. Owing to the lexicographic order adopted in $X$, it is easy to see that there is a right-upper corner diagonal submatrix $\text{diag}\{q^{\varepsilon_b\ast\lambda},\cdots, q^{\varepsilon_b\ast\lambda}\}$ with order $\binom{m{+}n{-}k_\lambda{-}h^0_\lambda{-}1}{s{-}k_\lambda}$ in the right-upper corner of $C$, which is provided by the front $\binom{m{+}n{-}k_\lambda{-}h^0_\lambda{-}1} {s{-}k_\lambda}$ equations corresponding to those words in $Q^{s+1}_\lambda$ with the beginning letter $b$. Otherwise, the diagonal submatrix is $\text{diag}\,\{q^{\varepsilon_b\ast\lambda},\cdots, q^{\varepsilon_b\ast\lambda}\}$. Thus, $\text{rank}\,A\geq \binom {m{+}n{-}k_\lambda{-}h^0_\lambda{-}1}{s{-}k_\lambda}$, and 
\begin{equation*}
\begin{split}
\dim\,\text{Ker}\,d^{s}\,\left|_{\mathcal{D}_\lambda^{(s)}}\right.&=\dim\mathcal{D}_\lambda^{(s)}-\text{rank}\,A\\
&\leq
\binom{m{+}n{-}k_\lambda{-}h^0_\lambda}{s{-}k_\lambda}-\binom
{m{+}n{-}k_\lambda{-}h^0_\lambda{-}1}{s{-}k_\lambda}
=\binom {m{+}n{-}k_\lambda{-}h^0_\lambda{-}1}{s{-}k_\lambda{-}1}.
\end{split}
\end{equation*}
 By the inductive hypothesis, $\dim\,\text{Im}\,d^{s-1}\,\left|_{\mathcal{D}_\lambda^{(s-1)}}\right.=\binom {m{+}n{-}k_\lambda{-}h^0_\lambda{-}1}{s{-}k_\lambda{-}1}$, and $\text{Im}\,d^{s-1}\subseteq \text{Ker}\,d^s$, we get $\dim\,\text{Ker}\,d^s\,\left|_{\mathcal D_\lambda^{(s)}}\right.\geq \binom{m{+}n{-}k_\lambda{-}h^0_\lambda{-}1}{s{-}k_\lambda{-}1}$.
 Hence, we get $\dim\,\text{Ker}\,d^{s}\,\left|_{\mathcal{D}_\lambda^{(s)}}=\binom {m{+}n{-}k_\lambda{-}h^0_\lambda{-}1}{s{-}k_\lambda{-}1}\right.$ 
 $=\text{dim}\,\text{Im}\,d^{s-1}\,\left|_{\mathcal{D}_\lambda^{(s-1)}}\right.$, that is,
$$
\text{Ker}\,d^{s}\,\left|_{\mathcal{D}_\lambda^{(s)}}=\text{Im}\,d^{s-1}\,\right|_{\mathcal{D}_\lambda^{(s-1)}}.
$$
So, the cases $k_\lambda\le s$ with $m+n>k_\lambda+h_\lambda$ have no contribution to $H^s_{DR}(\mathcal{D}_q(m|n,\mathbf{r})^{(\bullet)})$.

This completes the proof.
\end{proof}

\subsection{Quantum de Rham cohomologies $H_{DR}^s\bigl(\mathcal{D}_q(m|n)^{(\bullet)}\bigr)$} 
In this final subsection, we give a description of the quantum super de Rham cohomologies for the quantum de Rham cochain complex $\bigl(\mathcal{D}_q(m|n)^{(\bullet)},d^\bullet\bigr)$. Actually, Lemma 34 and the result of Case (II) in the proof of Theorem 35 are still available to the quantum super de Rham cochain complex $\bigl(\mathcal D_q(m|n)^{(\bullet)}, d^\bullet\bigr)$. So, we can obtain the quantized version of the Poincar\'e Lemma on the de Rham complex as follows.

\begin{theorem} $($Poincar\'e Lemma$)$
Suppose $q$ is an $\ell$-th primitive root of unity. Then the quantum de Rham cochain complex $(\mathcal{D}_q(m|n)^\bullet,d^\bullet)$ over $\Omega_q(m|n):$
$$
0\longrightarrow \mathcal{D}_q(m|n)^{(0)}\overset{d^0}\longrightarrow \cdots \overset{d^{s-1}}\longrightarrow \mathcal{D}_q(m|n)^{(s)}\overset{d^s}\longrightarrow \mathcal D_q(m|n)^{(s+1)}\overset{d^{s+1}}\longrightarrow \cdots \longrightarrow \mathcal D_q(m|n)^{(m+n)}\overset{d^{m+n}}\longrightarrow 0
$$
is acyclic, that is, the quantum de Rham cohomology groups 
$$
H^0_{DR}\bigl(\mathcal{D}_q(m|n)^{(\bullet)}\bigr)=\text{$\mathbb K$}, \qquad H^s_{DR}\bigl(\mathcal{D}_q(m|n)^{(\bullet)}\bigr)=0, \qquad  {\it for } \  1\le s\le m+n.
$$
\end{theorem}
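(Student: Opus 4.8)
The plan is to deduce the non-truncated Poincar\'e Lemma from the truncated calculation of Theorem 36 by a ``modular'' limiting argument, exactly as advertised in the introduction. The key observation is that the complex $\bigl(\mathcal D_q(m|n)^{(\bullet)}, d^\bullet\bigr)$ splits into super-weight components $\mathcal D_\lambda^{(\bullet)}$ for $\lambda=\langle\alpha,\nu\rangle\in\mathbb Z_+^m\times\mathds Z_2^n$, and each $d^s$ preserves super-weights; here $\alpha$ ranges over \emph{all} of $\mathbb Z_+^m$ with no truncation bound. First I would fix a super-weight $\lambda$ and choose $r\in\mathbb N$ large enough that $\alpha\le\mathbf r=(r\ell{-}1,\dots,r\ell{-}1)$ \emph{and} such that no component $\alpha_i$ equals $r\ell$; this is possible precisely because $\lambda$ is a single fixed weight. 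With such an $r$, the entire super-weight subcomplex $\mathcal D_\lambda^{(\bullet)}$ sits inside $\mathcal D_q(m|n,\mathbf r)^{(\bullet)}$, and moreover $\lambda$ is then a \emph{non-critical} super-weight in the sense of Definition 35 (since $k_\lambda=\sharp\{i\in I_0\mid \alpha_i=r\ell\}=0<m+n$). By Case (II) of the proof of Theorem 36 — the statement $(\spadesuit)$, which was proved by induction on $s$ using only Lemma 34 and the explicit differential formula (5.16), and whose hypotheses do not depend on the choice of $r$ beyond $\lambda\in SW_{nc}$ — we get $\text{Ker}\,d^s|_{\mathcal D_\lambda^{(s)}}=\text{Im}\,d^{s-1}|_{\mathcal D_\lambda^{(s-1)}}$ for all $s\ge1$, and $H^0_{DR}(\mathcal D_\lambda^{(\bullet)})=0$ as well.

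Next I would handle the single remaining super-weight, $\lambda=\mathbf 0$, which is the only critical one contributing in the non-truncated setting when we insist $\alpha\in\mathbb Z_+^m$: indeed the critical set $SW_c$ of Definition 35 consists of weights with some components equal to $r\ell$, and for a genuinely fixed $\lambda\in\mathbb Z_+^m$ one can always enlarge $r$ to escape criticality \emph{unless} $\lambda=\mathbf0$ (where $k_{\mathbf 0}=0$ but $h_{\mathbf 0}=m$, so $k_\lambda+h_\lambda=m<m+n$ when $n>0$, hence $\mathbf 0$ is also non-critical for $n\ge1$; only the purely bosonic truncation issue at the top degree survives). For $\lambda=\mathbf0$ the subcomplex $\mathcal D_{\mathbf 0}^{(\bullet)}$ is spanned by the elements $x^\tau\otimes d\xi^\tau$ with $\tau\subseteq I_1$, together with $1$; a direct inspection via (5.14), or again the argument of $(\spadesuit)$ specialized to $\lambda=\mathbf0$, shows $H^0_{DR}(\mathcal D_{\mathbf 0}^{(\bullet)})=\mathbb K\cdot 1$ and $H^s_{DR}(\mathcal D_{\mathbf 0}^{(\bullet)})=0$ for $s\ge1$. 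Assembling over all $\lambda$, the only nonzero cohomology is $H^0_{DR}=\mathbb K$, giving the claimed acyclicity.

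The main obstacle — really a bookkeeping subtlety rather than a deep difficulty — is to make the ``choose $r$ large'' step uniform and correct: one must verify that for a \emph{fixed} $\lambda$ the finite-dimensional complex $\mathcal D_\lambda^{(\bullet)}$ computed inside $\mathcal D_q(m|n)$ literally coincides with the one computed inside $\mathcal D_q(m|n,\mathbf r)$ for all sufficiently large $r$ (so that taking cohomology commutes with the inclusion), and that the classification critical/non-critical in Definition 35, which is phrased relative to the truncation parameter $r$, correctly identifies $\lambda$ as non-critical once $r$ is large. This requires noting that $\mathcal D_\lambda^{(s)}$, as described by Lemma 34, depends only on $k_\lambda=\sharp\{i\in I_0\mid\lambda_i=r\}$ and $h^0_\lambda$, and that for $r\gg0$ one has $k_\lambda=0$, so $\dim\mathcal D_\lambda^{(s)}=\binom{m+n-h^0_\lambda}{s}$ stabilizes and the differentials stabilize accordingly. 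Once this stabilization is recorded, the rest is a direct transcription of Theorem 36(II), and no new computation is needed; I would therefore present the proof as: (i) super-weight decomposition; (ii) stabilization lemma identifying $\mathcal D_\lambda^{(\bullet)}$ with a non-critical truncated super-weight complex for $r\gg0$; (iii) invoke $(\spadesuit)$ from the proof of Theorem 36; (iv) conclude $H^0=\mathbb K$, $H^s=0$ for $s\ge1$.
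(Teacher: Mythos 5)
Your strategy coincides with the paper's own proof: decompose by super-weights, observe that for a fixed \emph{nonzero} $\lambda=\langle\alpha,\nu\rangle$ one may take $r$ with $r\ell>|\alpha|+m+n$ so that $\mathcal D_q(m|n)^{(s)}_\lambda=\mathcal D_q(m|n,\mathbf r)^{(s)}_\lambda$ with $k_\lambda=0$, hence $\lambda$ non-critical, and then import the exactness statement $(\spadesuit)$ from Case (II) of the proof of Theorem 36. Your ``stabilization lemma'' (ii) is exactly the paper's one-line observation, and steps (i), (iii), (iv) are a faithful transcription of the paper's argument for $\lambda\neq\mathbf 0$.

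The gap is in your treatment of $\lambda=\mathbf 0$, which is precisely the weight the paper's proof passes over in silence. First, the bookkeeping: $h_\lambda=\sharp\{i\in I\mid\lambda_i=0\}$ is counted over all of $I=I_0\cup I_1$, so $h_{\mathbf 0}=m+n$ (not $m$) and $k_{\mathbf 0}+h_{\mathbf 0}=m+n$; thus $\mathbf 0$ \emph{is} critical --- the proof of Theorem 36 states explicitly that $\mathbf 0\in SW_c$ --- and $(\spadesuit)$ cannot be invoked for it. Second, and more seriously, ``direct inspection'' does not yield what you assert. By Lemma 34 the whole zero-weight component $\mathcal D_{\mathbf 0}^{(s)}=\mathrm{Span}_{\mathbb K}\{x^\tau\otimes d\xi^\tau\mid \tau\subseteq I_1,\ |\tau|=s\}$ lies in $\operatorname{Ker}d^s$ (e.g.\ $d^1(x_j\otimes d\xi_j)=0$ because the factor $\delta_{\nu_j,1}\delta_{\nu_j',0}$ in (5.5) vanishes), while $\operatorname{Im}d^{s-1}|_{\mathcal D_{\mathbf 0}^{(s-1)}}=0$ for the same reason, and since $x_j^2=0$ in $\Lambda_{q^{-1}}(n)$ there is no element of $\Omega_q(m|n)$ whose differential produces $x_j\otimes d\xi_j$. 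So $H^s_{DR}(\mathcal D_{\mathbf 0}^{(\bullet)})$ has dimension $\binom{n}{s}$ for $1\le s\le n$: these are exactly the $t=s$ critical classes of Theorem 36, and enlarging $r$ does nothing to kill them because they involve no bosonic variables (the enlargement only disposes of the critical classes with $s_1>0$). As written, your step (and, unless some further convention on $\sqcap_q(m|n)$ is intended, the asserted vanishing of $H^s_{DR}$ for $1\le s\le n$ when $n\ge1$) fails at this one weight; you correctly isolated the delicate case but resolved it with an incorrect count of $h_{\mathbf 0}$ and an unverified vanishing claim.
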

\begin{proof}
Obviously, $H^0_{DR}\bigl(\mathcal{D}_q(m|n)^{(\bullet)}\bigr)=\mathbb{K}$.

For any given $\lambda=\langle\,\alpha, \nu\,\rangle\in \mathbb{Z}^m\times\mathds Z_2^n$, since each $d^s$ preserves super-weight gradings, we have
$$
0\overset{d^{-1}}\longrightarrow\mathcal{D}_q(m|n)_\lambda^{(0)}\overset{d^0}\longrightarrow \cdots \overset{d^{s-1}}\longrightarrow \mathcal{D}_q(m|n)_\lambda^{(s)}\overset{d^s}\longrightarrow \mathcal{D}_q(m|n)_\lambda^{(s+1)}\overset{d^{s+1}}\longrightarrow \cdots\longrightarrow \mathcal{D}_q(m|n)_\lambda^{(m+n)}\overset{d^{m+n}}\longrightarrow  0.
$$

Since $H^s_{DR}(\mathcal{D}_q(m|n)^{(\bullet)})=\bigoplus_{\lambda\in \mathbb{Z}^m\times\mathds Z_2^n}\text{Ker}\,d^s\mid_{\mathcal{D}_q{(m|n)}_\lambda^{(s)}}/ \text{Im}\,d^{s-1}\mid_{\mathcal{D}_q(m|n)_\lambda^{(s-1)}}$, and for any given super-weight $\textbf{0}\neq\lambda=\langle\,\alpha, \nu\,\rangle\in \mathbb{Z}^m\times\mathds Z_2^n$, there exists  $r\in \mathbb{N}$, such that $r\ell>|\,\alpha\,|+m+n$, we see $\mathcal{D}_q(m|n)_\lambda^{(s)}
=\mathcal{D}_q(m|n,\textbf{r})_\lambda^{(s)}$, for $1\le s\le m+n$ and $k_\alpha=0$. This means that any nonzero super-weight $\lambda$ is non-critical in our sense. So, the criterion in Lemma 34 is adapted to our case, namely, $\mathcal{D}_q(m|n)_\lambda^{(s-1)}
=\mathcal{D}_q(m|n,\textbf{r})_\lambda^{(s-1)}\ne 0$, for $1\le s\le m+n$. According to the proof of Theorem 35, the result of Case (II) works, that is, $\text{Ker}\, d^s|_{\mathcal D_\lambda^{(s)}}=\text{Im}\,d^{(s-1)}|_{\mathcal D_\lambda^{(s-1)}}$ for any given non-critical super-weight $\lambda$. Notice that according to our choice of $r$, every super-weight $\lambda$ is always non-critical. 
This implies that $H_{DR}^{(s)}\bigl(\mathcal D_q(m|n)^{(\bullet)}_\lambda\bigr)=0$ for $1\le s\le m+n$.
\end{proof}

\bigskip

\end{document}